\newcommand{\ch}[1]{{\mbox{\raise 1pt\hbox{\large$\chi$}}}_{\lower 1pt\hbox{$\scriptstyle #1$}}}
\def\1{\raisebox{2pt}{\rm{$\chi$}}}
\newtheorem{theorem}{Theorem}[section]
\newtheorem{corollary}[theorem]{Corollary}
\newtheorem{lemma}[theorem]{Lemma}
\theoremstyle{definition}
\newtheorem{definition}[theorem]{Definition}
\newtheorem{remark}[theorem]{Remark}
\newtheorem{example}[theorem]{Example}
\newcommand{\R}{{\mathbb R}}
\newcommand{\N}{{\mathbb N}}
\newcommand{\Ha}{{\mathcal H}}
\newcommand\cp{\operatorname{cap}}
\newcommand\diam{\operatorname{diam}}
\def\1{\raisebox{2pt}{\rm{$\chi$}}}
\newcommand{\Lip}{\operatorname{Lip}}
\def\vint_#1{\mathchoice%
        {\mathop{\kern 0.2em\vrule width 0.6em height 0.69678ex depth -0.58065ex
                \kern -0.8em \intop}\nolimits_{\kern -0.4em#1}}%
        {\mathop{\kern 0.1em\vrule width 0.5em height 0.69678ex depth -0.60387ex
                \kern -0.6em \intop}\nolimits_{#1}}%
        {\mathop{\kern 0.1em\vrule width 0.5em height 0.69678ex
            depth -0.60387ex
                \kern -0.6em \intop}\nolimits_{#1}}%
        {\mathop{\kern 0.1em\vrule width 0.5em height 0.69678ex depth -0.60387ex
                \kern -0.6em \intop}\nolimits_{#1}}}
\def\vintslides_#1{\mathchoice%
        {\mathop{\kern 0.1em\vrule width 0.5em height 0.697ex depth -0.581ex
                \kern -0.6em \intop}\nolimits_{\kern -0.4em#1}}%
        {\mathop{\kern 0.1em\vrule width 0.3em height 0.697ex depth -0.604ex
                \kern -0.4em \intop}\nolimits_{#1}}%
        {\mathop{\kern 0.1em\vrule width 0.3em height 0.697ex depth -0.604ex
                \kern -0.4em \intop}\nolimits_{#1}}%
        {\mathop{\kern 0.1em\vrule width 0.3em height 0.697ex depth -0.604ex
                \kern -0.4em \intop}\nolimits_{#1}}}
\newcommand{\dist}{\operatorname{dist}}
\title[Capacities and density conditions in metric spaces]{Capacities and density conditions in metric spaces}
\author[J.\! Canto]{Javier Canto}
\address[J.C.]{Facultad de Ciencia y Tecnolog\'ia, Universidad del Pa\'is Vasco / Euskal 
Herriko Unibertsitatea (UPV/EHU), Departamento de Matem\'aticas, Apartado 644, 48080 Bilbao, Spain}
\email{javier.canto@ehu.eus}
\author[L. Ihnatsyeva]{Lizaveta Ihnatsyeva}   %
\address[L.I.]{Department of Mathematics, Kansas State University, Manhattan, KS 66506, USA}
\email{ihnatsyeva@math.ksu.edu}
\author[J. Lehrb\"ack]{Juha Lehrb\"ack}   %
\address[J.L.]{Department of Mathematics and Statistics, P.O. Box 35, FI-40014 University of Jyvaskyla, Finland}
\email{juha.lehrback@jyu.fi}
\author[A. V. V\"ah\"akangas]{Antti V. V\"ah\"akangas}
\address[A.V.V.]{Department of Mathematics and Statistics, P.O. Box 35, FI-40014 University of Jyvaskyla, Finland}
 \email{antti.vahakangas@iki.fi}
\keywords{Comparison of capacities, Riesz capacity, Haj{\l}asz capacity, Hausdorff content, capacity density condition,
metric measure space}
\subjclass[2020]{
	31C15   
	(28A12, 
	31E05)}  
\begin{document}

\begin{abstract}
We examine the relations between different capacities in the setting of a metric measure space.
First, we prove a comparability result for the Riesz $(\beta,p)$-capacity and the relative
Haj{\l}asz $(\beta,p)$-capacity, for $1<p<\infty$ and $0<\beta \le 1$,
under a suitable kernel estimate related to the Riesz potential. Then we show that
in geodesic spaces the corresponding capacity density conditions are equivalent even
without assuming the kernel estimate. In the last part of the paper, we compare
the relative Haj{\l}asz $(1,p)$-capacity to the relative variational $p$-capacity.
\end{abstract}

\maketitle

\section{Introduction}

It is well known \cite{MR0350027,MR817985} that the classical Newtonian capacity of a 
subset of the Euclidean space $\R^n$ can be characterized as the minimum of an energy functional 
in terms of the weak gradient on the Sobolev space $W^{1,2}(\R^n)$. This characterization  has led to extensions of the notion of capacity in several directions and, in particular, to the development of a theory of capacities related to Riesz and Bessel potentials; see \cite{MR1411441}. 
In many cases the
different definitions lead to comparable
capacities, see e.g.~\cite{MR1628134,MR2839008,MR1411441,MR328109,MR0276487}. 
For instance, the Riesz $(\beta,p)$-capacity $R_{\beta,p}$ %
is comparable to the classical Newtonian capacity when $\beta=1$ and $p=2$. 

In this paper, we consider various capacities 
in the setting of a metric measure space $X$,
and our main goal is to clarify the connections between these different notions. 
Analogues of Riesz potentials in metric measure spaces
have been studied for instance in \cite{MR1683160,MR1800917,MR3825765,MR2569546,MR1909289}, 
and with these it is possible to consider the corresponding Riesz capacities, see e.g.~\cite{Nuutinen2015TheRC}
and Section~\ref{s.HRcap}.
On the other hand, generalizations of the (weak) gradients to the metric setting
lead to variants of capacities 
defined in terms of Sobolev functions, such as the Newtonian capacity. 
In particular, we will use a metric space
version 
of the relative variational $p$-capacity
$\cp_p(F,\Omega)$ defined via 
$p$-weak upper gradients as in~\cite{MR1809341,MR2867756}; see Section~\ref{s.equivbeta1}.
Here $\Omega\subset X$ is a bounded open set and $F\subset\Omega$ is a closed set.
Another possibility is to use 
the so-called Haj{\l}asz gradients. These 
were first defined in~\cite{MR1401074}, and 
like Riesz potentials, they are of non-local nature,
whereas $p$-weak upper gradients are local. 
The Haj{\l}asz gradient approach to capacities in metric spaces has been used for instance  
in~\cite{MR1404091,MR1752853,MR2424909}. Following~\cite{CV2021}, we consider the relative 
Haj{\l}asz $(\beta,p)$-capacity $\cp_{\beta,p}(F,\Omega)$,
where $F$ and $\Omega$ are as above; see Section~\ref{s.HRcap}. 

After preliminary definitions and results in Sections~\ref{s.prelim} and~\ref{s.HRcap},
we begin the comparisons of capacities in Section~\ref{s.cap_are_comparable},
where we study the Riesz and Haj{\l}asz capacities of sets relative to balls.
In the special case of an Ahlfors regular metric space, the main result of that section
(Corollary~\ref{c.HCapEqvRCap}) reads as follows; see Remark~\ref{r.Qreg}.

\begin{theorem}\label{thm.RH_intro}
Let $1<p<\infty$, $0<\beta< 1$,   and $0< Q<\infty$   be such that $Q>\beta p$. 
Assume that $X$ is a complete and connected metric space
equipped with an Ahlfors $Q$-regular measure. 
Then there is a constant $C>0$ such that
\begin{equation}\label{e.comparabilityRieszHajlasz_intro}
 C^{-1} \: R_{\beta,p}(E\cap \overline{B(x,r)}) \leq \cp_{\beta,p} \big( E\cap \overline{B(x,r)},B(x,2r) \big)
 \le C\: R_{\beta,p}(E\cap \overline{B(x,r)})\,,
\end{equation}
whenever $E\subset X$ is a closed set, $x\in E$ and $0<r<(1/8)\diam(X)$. 
\end{theorem}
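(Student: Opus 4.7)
The plan is to prove the two inequalities in \eqref{e.comparabilityRieszHajlasz_intro} separately by converting near-extremal admissible functions for one capacity into admissible functions for the other. In the Ahlfors $Q$-regular setting the Riesz kernel $d(x,y)^\beta / \mu(B(x, d(x,y)))$ is pointwise comparable to $d(x,y)^{\beta - Q}$; this is the ``suitable kernel estimate'' alluded to in the abstract, and it is what aligns metric-space Riesz potentials with Haj{\l}asz $\beta$-gradients. The Ahlfors regular case should then reduce essentially to a direct specialization of the general statement proved in Section~\ref{s.cap_are_comparable}.

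For the upper bound $\cp_{\beta,p}(F, B(x,2r)) \leq C R_{\beta,p}(F)$, with $F = E \cap \overline{B(x,r)}$, I would take a nonnegative $f \in L^p(X)$ with $I_\beta f \geq 1$ on $F$ and $\|f\|_p^p$ close to $R_{\beta,p}(F)$, set $v = \min\{I_\beta f, 1\}$, and multiply by a Lipschitz cutoff $\varphi$ equal to $1$ on $\overline{B(x, 3r/2)}$ and vanishing outside $B(x,2r)$. The product $u = v\varphi$ is admissible for $\cp_{\beta,p}(F, B(x,2r))$, and a Haj{\l}asz $\beta$-gradient of $u$ splits into two pieces: one coming from $v$, built via the standard fractional pointwise oscillation estimate for Riesz potentials (its $L^p$ norm is controlled by $\|Mf\|_p \lesssim \|f\|_p$ through the Hardy--Littlewood maximal inequality), and one of the form $r^{-\beta} v \chi_{B(x,2r)}$ coming from the cutoff. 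The $L^p$ norm of the second piece is handled by combining the fractional Haj{\l}asz--Sobolev embedding $\|v\|_{L^{Qp/(Q-\beta p)}} \lesssim \|f\|_p$ (which uses $Q > \beta p$) with H\"older's inequality on $B(x,2r)$, yielding $r^{-\beta}\|v\|_{L^p(B(x,2r))} \lesssim \|f\|_p$.

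For the reverse inequality $R_{\beta,p}(F) \leq C \cp_{\beta,p}(F, B(x,2r))$, I would take $u$ admissible for the Haj{\l}asz condenser capacity with Haj{\l}asz $\beta$-gradient $g \geq 0$ and $u \equiv 0$ outside $B(x,2r)$. The goal is the pointwise representation $u(x_0) \leq C I_\beta g(x_0)$ at $R_{\beta,p}$-quasi-every $x_0 \in F$, since then $Cg$ is admissible for the Riesz capacity and $\|g\|_p^p \leq \cp_{\beta,p}(F, B(x,2r))$ is essentially optimal. I would obtain this bound by a telescoping along dyadic balls $B_k = B(x_0, 4r\cdot 2^{-k})$ for $k \geq 0$: the fractional Haj{\l}asz inequality together with Ahlfors regularity gives $|u_{B_k} - u_{B_{k+1}}| \leq C r_k^\beta \mu(B_k)^{-1}\int_{B_k} g\, d\mu$, the largest-scale term $u_{B_0}$ is controlled by the same type of estimate using that $u \equiv 0$ on a definite fraction of $B_0$, and the resulting geometric sum converts into the Riesz potential $I_\beta g(x_0)$ through the kernel comparison $k_\beta(x_0, y) \approx d(x_0, y)^{\beta - Q}$.

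The principal technical obstacle is this lower bound: one has to arrange the telescoping so that the vanishing of $u$ outside $B(x,2r)$ is genuinely used to control the initial average, pass from the discrete sum of dyadic averages to the continuous Riesz potential uniformly in $x_0$, and verify the resulting pointwise inequality at $R_{\beta,p}$-quasi-every point rather than merely almost everywhere. Completeness and connectedness of $X$ enter through the covering and Haj{\l}asz--Poincar\'e estimates used at each dyadic scale, while the assumption $Q > \beta p$ is needed both for the Sobolev embedding in the upper bound direction and to guarantee finiteness of $I_\beta g$ at the relevant points.
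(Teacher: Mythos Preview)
Your lower-bound argument --- telescoping $u$ along concentric dyadic balls and invoking the Haj{\l}asz--Poincar\'e inequality to reach $u(x_0)\le C I_\beta g(x_0)$ --- is essentially the paper's proof of Theorem~\ref{p.HaCap_geq_RieszCap}. The paper uses an off-center chain (Lemma~\ref{l.chain.estimate.improved}) rather than concentric balls, which makes the passage from the discrete sum to $I_\beta g$ slightly more direct and needs only connectedness, but the idea is the same. Your worry about quasi-every versus every point is unnecessary: since $u\in\Lip_\beta(X)$, the pointwise estimate holds at every $x_0\in F$ once the smallest radius is sent to zero.

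The upper-bound direction is where you take a genuinely different route. The paper does not use a Sobolev embedding; instead it splits into two cases according to whether the tail $\int_{X\setminus B(z,r/5)}$ of $I_\beta f(z)$ exceeds $\tfrac12$ at some $z\in F$. If yes, a direct dyadic estimate (Lemma~\ref{CommonEstimate}, using reverse doubling with exponent $\sigma>\beta p$) gives $\lVert f\rVert_p^p\ge C r^{-\beta p}\mu(B)$, which already dominates the trivial upper bound for the Haj{\l}asz capacity. If no, one replaces $f$ by $h=2f\chi_{B(x,2r)}$; then $I_\beta h\le C r^\beta Mf$ on $B(x,2r)$ by Lemma~\ref{l.LocalRiesz_via_MaxFunct}, so the cutoff error $r^{-\beta}I_\beta h$ is absorbed directly into $Mf$ without any embedding. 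Your route via $\lVert I_\beta f\rVert_{L^{Qp/(Q-\beta p)}}\lesssim\lVert f\rVert_p$ followed by H\"older on $B(x,2r)$ is cleaner in the Ahlfors-regular setting and avoids the case split, but it relies on a global mapping property of $I_\beta$ that the paper's argument does not; this is why Theorem~\ref{th.HCapLessRCap} applies more broadly under just reverse doubling and the kernel estimate~\eqref{e.kernel}. In either approach one must still cope with the possibility that $I_\beta f$ (hence your $v$) fails to lie in $\Lip_\beta(X)$ when $f$ is unbounded; the paper handles this by approximating with $f_k=\min\{f,k\}$ and using compactness of $F$ --- this is where completeness of $X$ is actually used --- and your proof needs the same device.
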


More generally,
the lower bound in~\eqref{e.comparabilityRieszHajlasz_intro} holds under much weaker assumptions,
while in the upper bound the Ahlfors regularity assumption can be replaced with
a reverse doubling condition and an explicit ``kernel estimate''. See 
Theorems~\ref{p.HaCap_geq_RieszCap} and~\ref{th.HCapLessRCap}, respectively,
for details.

From the point of view of analysis on general metric spaces, the assumptions for the upper
bound in~\eqref{e.comparabilityRieszHajlasz_intro} are still rather restrictive.
In Section~\ref{s.lequiv_density}, we take another approach to the comparison of capacities
under different assumptions. Namely, instead of a direct comparison between the Haj{\l}asz and Riesz
capacities, we study the equivalence of the corresponding capacity density conditions. 
For example, a closed set $E\subset X$ satisfies the Riesz $({\beta},p)$-capacity 
density condition, if there is a constant $c>0$ such that
\begin{equation*}%
R_{\beta,p}(E\cap \overline{B(x,r)})\ge c \: R_{\beta,p}(\overline{B(x,r)}) 
\end{equation*}
for all $x\in E$ and all $0<r<(1/8)\diam(E)$. The definitions 
of density conditions for other capacities are similar,
see Definitions~\ref{d.cap_density} and~\ref{d.capacity density}.

The  origins of the
(Riesz) capacity density conditions come
from  \cite{MR946438}, where the Riesz $({\beta},p)$-capacity  $R_{\beta,p}$ was used in Euclidean spaces
to define a fatness condition. %
The main result   of~\cite{MR946438} states that in $\R^n$   the Riesz capacity density condition is open ended on $\beta$ and on $p$, that is, if a set   $E\subset\R^n$ satisfies the Riesz $(\beta,p)$-capacity density condition, then $E$ also satisfies the condition   for some $q<p$ and $\alpha<\beta$. This result has found numerous applications in potential theory{ ,} in the study of Hardy inequalities and in partial differential equations, see for instance \cite{MR1302151,KLV2021,MR1386213}.

Capacity density conditions have been generalized also to metric spaces, and  
often these conditions turn out to be open-ended (self-improving) as well.
For instance, 
the $p$-capacity density condition, defined in terms of the relative variational $p$-capacity for $1<p<\infty$,
has been shown to be open-ended in~\cite{MR1869615,MR3673660} under the ``standard assumptions''
on analysis on metric spaces (see Section~\ref{s.equivbeta1}).
On the other hand, 
for the Haj{\l}asz $(\beta,p)$-capacity density condition,   the open-endedness
on both $\beta$ and $p$
was proven recently in~\cite{CV2021} in complete geodesic spaces. 

In Section~\ref{s.lequiv_density}, we show the equivalence of 
Riesz and Haj{\l}asz $(\beta,p)$-capacity density conditions
in a complete geodesic space,
see Theorem \ref{t.Riesz_and_Hajlasz}. As a tool we use another density condition,
given in terms of Hausdorff content of codimension $q$, where $0<q<\beta p$.
Under a suitable reverse doubling condition for the space $X$, such a
Hausdorff content density condition for a set $E\subset X$ implies that
$E$ satisfies the Riesz $(\beta,p)$-capacity density condition;
this follows from the results in Section~\ref{s.aux_restimate}.
On the other hand, in~\cite{CV2021} it was shown that in complete geodesic spaces
the Haj{\l}asz $(\beta,p)$-capacity density condition is equivalent to
the Hausdorff content density condition of codimension $q$, for some $0<q<\beta p$.
Taking also into account the lower bound in~\eqref{e.comparabilityRieszHajlasz_intro},
which is valid under minimal assumptions, this leads to the equivalence 
of Riesz and Haj{\l}asz $(\beta,p)$-capacity density conditions;
see Section~\ref{s.lequiv_density} for details.

Since the Haj{\l}asz capacity density conditions are known to be open-ended by~\cite{CV2021},
we obtain as a particular consequence of the above equivalence result that  
also the Riesz $(\beta,p)$-capacity density condition
is open-ended in complete geodesic spaces,
see Corollary \ref{c.riesz-improvement}.
This is an extension of the main result of~\cite{MR946438} to metric spaces.

Finally, in Section~\ref{s.equivbeta1} %
we show that the relative variational $p$-capacity $\cp_p(F,\Omega)$ and
the Haj{\l}asz $(1,p)$-capacity $\cp_{1,p}(F,\Omega)$
are comparable, for $1<p<\infty$, assuming that the metric space $X$ supports a $q$-Poincar\'e inequality
for some $1\le q < p$. Therefore we conclude that under suitable assumptions also the 
density conditions for the variational $p$-capacity, Haj{\l}asz $(1,p)$-capacity, and
Riesz $(1,p)$-capacity are equivalent; see Theorem~\ref{t.main_wide_beta=1}.

\section{Preliminaries}\label{s.prelim}

\subsection{Metric spaces}\label{s.metric}
Throughout the paper  we assume that $X=(X,d,\mu)$ is a metric measure space equipped with a metric $d$ and a 
positive complete Borel
measure $\mu$ such that $0<\mu(B)<\infty$
for all balls $B\subset X$, each of which is an open set of the form \[B=B(x,r)=\{y\in X\,:\, d(y,x)<r\}\] with $x\in X$ and $r>0$. Under these assumptions 
the space $X$ is separable,
see \cite[Proposition~1.6]{MR2867756}.
We also assume that $\# X\ge 2$ and  
that the measure $\mu$ is {\em doubling}, that is,
there is a constant $c_\mu> 1$, called
the {\em doubling constant of $\mu$}, such that
\begin{equation}\label{e.doubling}
\mu(2B) \le c_\mu\, \mu(B)
\end{equation}
for all balls $B=B(x,r)$ in $X$. 
Here we use for $0<t<\infty$ the notation $tB=B(x,tr)$.

If $X$ is  connected,  
then the  doubling measure $\mu$ satisfies  also 
the reverse doubling condition, that is,
there is a constant  $0<c_R=C(c_\mu)<1$ such that
\begin{equation}\label{e.rev_dbl_decay}
\mu(B(x,r/2))\le c_R\, \mu(B(x,r))
\end{equation}
for all $x\in X$ and all $0<r<\diam(X)/2$;
see for instance~\cite[Lemma~3.7]{MR2867756}.
Iteration of~\eqref{e.rev_dbl_decay} shows that
there exist
an exponent $\sigma>0$ and a constant $c_\sigma>0$,
both depending on $c_\mu$ only, such that the quantitative 
reverse doubling condition
\begin{equation}\label{e.reverse_doubling}
 \frac{\mu(B(x,r))}{\mu(B(x,R))} \le c_\sigma\Bigl(\frac{r}{R}\Bigr)^\sigma
\end{equation}
holds in a connected space $X$ 
for  all $x\in X$ and  all $0<r<R\le 2\diam(X)$.

The space $X$ is called Ahlfors $Q$-regular, for $Q>0$, if there is a constant $c_Q\geq 1$ such that
\begin{equation}\label{def.Q-regular}
c_Q^{-1}r^Q\le\mu(B(x,r))\le c_Qr^Q
\end{equation}
for all $x\in X$ and all $0<r<\diam (X)$.
Note that~\eqref{e.reverse_doubling}
holds in an Ahlfors $Q$-regular space with $\sigma=Q$.

By a {\em curve} we mean a nonconstant, rectifiable, continuous
mapping from a compact interval of $\R$ to $X$;  we tacitly assume
that all curves are parametrized by their arc-length.
We say that $X$ is a {\em geodesic space}, if 
every pair of points in $X$
can be joined by a curve whose length is equal to the distance between the two points.

\subsection{H\"older functions and Haj{\l}asz gradients}
Let $A\subset X$.
We say that
$u\colon A\to \R$ is  a {\em $\beta$-H\"older function,} with 
an exponent $0<\beta\le 1$ and a constant 
$0\le \kappa <\infty$, if
\[
\lvert u(x)-u(y)\rvert\le \kappa\, d(x,y)^\beta\,,\qquad \text{ for all } x,y\in A\,.
\]
The set of all $\beta$-H\"older functions $u\colon A\to\R$
is denoted by $\Lip_\beta(A)$. 
The $1$-H\"older functions are also called {\em Lipschitz functions},
and we write $\Lip(A)=\Lip_1(A)$.

The definition of the Haj{\l}asz capacities 
will be based on the
following Haj{\l}asz $\beta$-gradients (see Definition~\ref{d.varcap}). 

\begin{definition}
For each function $u\colon X\to \R$, we let 
$\mathcal{D}_H^{\beta}(u)$ 
be the (possibly empty) family of all measurable functions $g\colon X\to [0,\infty]$ such that
\begin{equation}\label{e.hajlasz}
\lvert u(x)-u(y)\rvert \le d(x,y)^\beta\big( g(x)+g(y) \big)
\end{equation}
almost everywhere, that is, there exists an exceptional set $N=N(g)\subset X$ for which $\mu(N)=0$ and
inequality \eqref{e.hajlasz} holds for every $x,y\in X\setminus N$. 
A function  $g\in\mathcal{D}^\beta_H(u)$ is called a Haj{\l}asz $\beta$-gradient of the function $u$.
\end{definition}

The following nonlocal generalization of the Leibniz rule 
is taken from \cite[Theorem 3.4]{CV2021}, see also  \cite{MR1681586}. 
The nonlocality is reflected 
by the appearance of the two global terms $\lVert \psi\lVert_\infty$ and $\kappa$ in the statement below.

\begin{lemma}\label{l.Leibniz}
 Let  $0<\beta\le 1$. 
Assume that 
 $u\colon X\to \R$ is a bounded $\beta$-H\"older function and $\psi\colon X\to \R$ is a bounded $\beta$-H\"older
function with a constant $\kappa\ge 0$.  Then $u\psi\colon X\to \R$ is a $\beta$-H\"older function  and 
\[
(g_u\lVert \psi\lVert_\infty + \kappa \lvert  u\rvert)\ch{\{\psi\not=0\}}\in\mathcal{D}_H^{\beta}(u\psi)
\]
for all $g_u\in\mathcal{D}_H^{\beta}(u)$.
Here $\{\psi\not=0\}=\{y\in X: \psi(y)\not=0\}$.
\end{lemma}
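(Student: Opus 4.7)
The plan is to verify the Hölder property of $u\psi$ directly from the triangle inequality, and then deduce the Hajłasz gradient claim by distinguishing cases according to where $\psi$ vanishes.

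For the first claim, I would write $u(x)\psi(x)-u(y)\psi(y) = (u(x)-u(y))\psi(x) + u(y)(\psi(x)-\psi(y))$ and bound each piece by $\|\psi\|_\infty\cdot \kappa_u\, d(x,y)^\beta$ and $\|u\|_\infty\cdot \kappa\, d(x,y)^\beta$ respectively, where $\kappa_u$ is a Hölder constant for $u$. This is immediate and produces the $\beta$-Hölder constant $\kappa_u\|\psi\|_\infty+\kappa\|u\|_\infty<\infty$ for $u\psi$.

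For the gradient claim, fix $g_u\in\mathcal{D}_H^\beta(u)$ with exceptional set $N$ of measure zero, and take $x,y\in X\setminus N$. Denote $h=(g_u\|\psi\|_\infty+\kappa|u|)\chi_{\{\psi\neq 0\}}$. The main point is that $h$ carries no information at points where $\psi$ vanishes, so the standard decomposition used in the classical Leibniz rule must be chosen case by case. If $\psi(x)=\psi(y)=0$ both sides vanish and the inequality is trivial. If both $\psi(x)\neq 0$ and $\psi(y)\neq 0$, I would apply the decomposition $u(x)\psi(x)-u(y)\psi(y)=(u(x)-u(y))\psi(x)+u(y)(\psi(x)-\psi(y))$, use $|\psi(x)|\leq\|\psi\|_\infty$ together with \eqref{e.hajlasz} for $u$ and the $\beta$-Hölder bound for $\psi$ to obtain $|u(x)\psi(x)-u(y)\psi(y)|\leq d(x,y)^\beta\bigl[(g_u(x)+g_u(y))\|\psi\|_\infty+\kappa|u(y)|\bigr]$, and then symmetrize $\kappa|u(y)|\leq \kappa|u(x)|+\kappa|u(y)|$ to match the form $d(x,y)^\beta(h(x)+h(y))$.

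The delicate case, and the only real obstacle, is the asymmetric one, say $\psi(x)\neq 0$ and $\psi(y)=0$, where $h(y)=0$ so the whole right-hand side must be charged to the $x$-term alone. Here I would switch to the alternative decomposition $u(x)\psi(x)-u(y)\psi(y)=u(x)(\psi(x)-\psi(y))+\psi(y)(u(x)-u(y))$: the second summand vanishes because $\psi(y)=0$, and the first is bounded by $\kappa|u(x)|\,d(x,y)^\beta\leq h(x)\,d(x,y)^\beta\leq d(x,y)^\beta(h(x)+h(y))$, which is exactly what we need. This choice deliberately avoids invoking $g_u(y)$, which would otherwise force a term that cannot be absorbed by $h(y)=0$. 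Combining the three cases establishes \eqref{e.hajlasz} for $u\psi$ with gradient $h$ off the set $N$, so $h\in\mathcal{D}_H^\beta(u\psi)$ as claimed.
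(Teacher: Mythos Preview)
Your argument is correct. The paper does not actually supply a proof of this lemma; it quotes the statement from \cite[Theorem~3.4]{CV2021} (see also \cite{MR1681586}) and uses it as a black box. Your case analysis---splitting according to whether $\psi$ vanishes at neither, both, or exactly one of the two points, and in the asymmetric case switching to the decomposition $u(x)(\psi(x)-\psi(y))+\psi(y)(u(x)-u(y))$ so that the vanishing factor $\psi(y)$ kills the term that would otherwise require $g_u(y)$---is precisely the standard route to this kind of localized Leibniz estimate and matches what one finds in the cited references.
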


The following $(\beta,p,p)$-Poincar\'e inequality can be proved in a similar way as Theorem~5.15 in~\cite{MR1800917}.
This inequality relates the $\beta$-Haj{\l}asz gradient to the given measure. We want to emphasize that no additional assumptions on the measure are needed here.

\begin{theorem}\label{t.pp_poincare}
Let $1\le p<\infty$ and $0<\beta \le 1$,
and assume  that 
$u\in\Lip_\beta(X)$ and $g\in \mathcal{D}_H^{\beta}(u)$.
Then
\[ 
\vint_B \lvert u(x)-u_B\rvert^p\,d\mu(x)\le 2^p \diam(B)^{\beta p}\vint_B g(x)^p\,d\mu(x)\,,
\]
whenever $B\subset X$ is a ball.
\end{theorem}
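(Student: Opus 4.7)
The plan is to follow the standard template for Hajłasz-type Poincaré inequalities: average, apply Jensen, and then invoke the Hajłasz gradient estimate pointwise a.e.

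First, since $u\in\Lip_\beta(X)$ is in particular $\mu$-integrable on any ball, I would rewrite the deviation from the mean as an average:
\[
u(x)-u_B=\vint_B\bigl(u(x)-u(y)\bigr)\,d\mu(y)\qquad\text{for every }x\in B.
\]
Applying Jensen's inequality to the convex function $t\mapsto \lvert t\rvert^p$ (for $p\ge 1$) gives
\[
\lvert u(x)-u_B\rvert^p\le \vint_B \lvert u(x)-u(y)\rvert^p\,d\mu(y).
\]

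Next, I would use the Hajłasz gradient estimate to bound the integrand. Let $N=N(g)$ be the exceptional set from the definition of $\mathcal{D}_H^\beta(u)$, so that $\mu(N)=0$ and
$\lvert u(x)-u(y)\rvert\le d(x,y)^\beta\bigl(g(x)+g(y)\bigr)$ for all $x,y\in X\setminus N$. For $x,y\in B\setminus N$ one has $d(x,y)\le \diam(B)$, hence
\[
\lvert u(x)-u(y)\rvert^p\le \diam(B)^{\beta p}\bigl(g(x)+g(y)\bigr)^p\le 2^{p-1}\diam(B)^{\beta p}\bigl(g(x)^p+g(y)^p\bigr),
\]
using $(a+b)^p\le 2^{p-1}(a^p+b^p)$ valid for $p\ge 1$ and $a,b\ge 0$. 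Since $\mu(N)=0$, the set $B\cap N$ is negligible in all the averages below and can be discarded.

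Finally, I would integrate in $y$ over $B$ and then in $x$ over $B$, using Fubini and the symmetry between the two terms. Each of the two terms in $g(x)^p+g(y)^p$ integrates, after averaging in both variables, to $\vint_B g^p\,d\mu$, which gives a total of $2\vint_B g^p\,d\mu$. Combining with the factor $2^{p-1}$ yields the clean constant $2^p$:
\[
\vint_B \lvert u(x)-u_B\rvert^p\,d\mu(x)\le 2^p\diam(B)^{\beta p}\vint_B g(x)^p\,d\mu(x),
\]
as required. There is no serious obstacle here; the only bookkeeping point is to take Jensen only once (in the outer integral) and then use the crude $(a+b)^p\le 2^{p-1}(a^p+b^p)$ inequality so that the final constant is exactly $2^p$, and to ensure the exceptional set $N$ does not interfere, which is automatic from $\mu(N)=0$.
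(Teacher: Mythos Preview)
Your argument is correct and is exactly the standard proof: the paper does not spell out its own proof but refers to \cite[Theorem~5.15]{MR1800917}, whose argument is the same Jensen-plus-Haj{\l}asz computation you give, with the same constant $2^p$.
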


Here we use the  notation 
\begin{equation}\label{e.intaverage}
u_B=\vint_{B} u(y)\,d\mu(y)=\frac{1}{\mu(B)}\int_B u(y)\,d\mu(y)
\end{equation}
for  the integral average of $u\in L^1(B)$ over a ball $B\subset X$.

\subsection{Riesz potentials}\label{s.Riesz}

Riesz potentials appear
frequently in potential analysis,
see for instance \cite{MR1411441,MR0350027}. 
 For  a nonnegative measurable function $f$  on a metric space  $X$, the 
\emph{Riesz potential of order}  $\beta>0$  in $X$  can be defined by the expression 
\[
I_\beta f(x)=\int_{X}\frac{f(y)d(x,y)^\beta}{\mu(B(x,d(x,y)))}\,d\mu(y)\,,\qquad x\in X\,;
\]
see,  for instance,   \cite{MR1683160,MR1800917,MR3825765}.

We prove  two auxiliary  lemmata  on relations between the Riesz potential $I_\beta f$ and
the  non-centered  Hardy--Littlewood maximal function $Mf$.
 If $f:X\to \R$ is a measurable function, then
\begin{equation}\label{e.max_funct_def}
Mf(x)=\sup_{B}\vint_{B} \lvert f(y)\rvert\,d\mu(y)\,,\qquad x\in X\,,
\end{equation}
where the supremum is taken over all balls $B\subset X$ such that $x\in B$. The sublinear operator $M$ is bounded on $L^s(X)$ for $1<s\leq \infty$, see \cite[Theorem 3.13]{MR2867756}.

  The first lemma
is a variant of a well known Euclidean estimate,
see~\cite[Lemma~3.1.1]{MR1411441}.  
 
\begin{lemma}\label{l.LocalRiesz_via_MaxFunct} 
  Let $\beta>0$ and let $f$ be   a nonnegative
measurable function on $X$.   Then  
\[
\int_{B(z,r)}\frac{f(y)d(z,y)^\beta}{\mu(B(z,d(z,y)))}\,d\mu(y)
\le C(c_\mu,\beta)r^\beta Mf(z)
\]
  for all $z\in X$ and all $r>0$.  
\end{lemma}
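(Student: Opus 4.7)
The plan is to carry out the standard dyadic annular decomposition that is used in the Euclidean argument cited from \cite{MR1411441}, with the role of $\lvert x-y\rvert^{n-\beta}$ in the denominator replaced by the measure $\mu(B(z,d(z,y)))$, which is what allows doubling to substitute for Lebesgue scaling.

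First, for each integer $k\ge 0$, set $A_k=B(z,2^{-k}r)\setminus B(z,2^{-k-1}r)$. Every $y\in B(z,r)$ with $d(z,y)>0$ lies in exactly one $A_k$, and the singleton $\{z\}$ contributes nothing to the integral (it has finite measure and the integrand is naturally set to $0$ there). On $A_k$ we have
\[
d(z,y)^\beta\le (2^{-k}r)^\beta\qquad\text{and}\qquad \mu(B(z,d(z,y)))\ge \mu(B(z,2^{-k-1}r)),
\]
so the $A_k$--piece of the integral is at most
\[
\frac{(2^{-k}r)^\beta}{\mu(B(z,2^{-k-1}r))}\int_{A_k} f(y)\,d\mu(y)\le \frac{(2^{-k}r)^\beta}{\mu(B(z,2^{-k-1}r))}\int_{B(z,2^{-k}r)} f(y)\,d\mu(y).
\]

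Next, I apply the doubling property \eqref{e.doubling} to the inclusion $B(z,2^{-k}r)\subset 2B(z,2^{-k-1}r)$ to obtain $\mu(B(z,2^{-k}r))\le c_\mu\,\mu(B(z,2^{-k-1}r))$. This converts the right-hand integral into an integral average over $B(z,2^{-k}r)$, which is in turn bounded by $Mf(z)$ since $z\in B(z,2^{-k}r)$; hence the $A_k$--piece is at most $c_\mu(2^{-k}r)^\beta Mf(z)$.

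Finally, summing over $k\ge 0$ gives
\[
\int_{B(z,r)}\frac{f(y)d(z,y)^\beta}{\mu(B(z,d(z,y)))}\,d\mu(y)\le c_\mu r^\beta Mf(z)\sum_{k=0}^\infty 2^{-k\beta}=\frac{c_\mu}{1-2^{-\beta}}\,r^\beta Mf(z),
\]
which is the desired bound with $C(c_\mu,\beta)=c_\mu/(1-2^{-\beta})$. There is no serious obstacle: the only point that needs a small amount of care is that the measure in the denominator is replaced by $\mu(B(z,2^{-k-1}r))$ rather than $\mu(B(z,2^{-k}r))$, which forces one application of doubling before the average can be recognized as majorized by $Mf(z)$; since $\beta>0$ the geometric series converges and nothing further is required.
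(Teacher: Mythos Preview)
Your proof is correct and follows essentially the same approach as the paper's own proof: both carry out the dyadic annular decomposition $B(z,2^{-k}r)\setminus B(z,2^{-k-1}r)$, bound the kernel on each annulus, apply one instance of doubling to pass from $\mu(B(z,2^{-k-1}r))$ to $\mu(B(z,2^{-k}r))$, dominate the resulting average by $Mf(z)$, and sum the geometric series $\sum_k 2^{-k\beta}$. The arguments and the resulting constant $c_\mu/(1-2^{-\beta})$ coincide.
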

\begin{proof}
Define $r_i= 2^{-i} r$, $i=0,1,2,\dots$. 
  Using the doubling property of $\mu$
and the definition of the maximal function,
we   obtain
\begin{align*}
\int_{B(z, r)} \frac{f(y)d(z,y)^\beta}{\mu(B(z,d(z,y)))}\,d\mu(y)
&=\sum_{i=0}^{\infty} \int_{B(z,r_i)\setminus B(z,r_{i+1})} \frac{f(y)d(z,y)^\beta}{\mu(B(z,d(z,y)))}\,d\mu(y) \\
&\le \sum_{i=0}^{\infty} \int_{B(z,r_i)\setminus B(z,r_{i+1})} \frac{f(y)r_i^\beta}{\mu(B(z,r_{i+1}))}\,d\mu(y) \\
&\le  \sum_{i=0}^{\infty} \frac{r_i^\beta}{\mu(B(z,r_{i+1}))} \int_{B(z,r_i)} f(y) \, d\mu (y)\\
&\le  c_\mu \sum_{i=0}^{\infty}  r_i^\beta \vint_{B(z,r_i)} f(y) \, d\mu (y)\\
&\le c_\mu r^\beta Mf(z)\sum_{i=0}^{\infty} 2^{-i\beta} \ = \ C(c_\mu,\beta) r^\beta Mf(z). \qedhere
\end{align*}
\end{proof}

  The second lemma shows that the maximal function $Mf$
can be (essentially) used as a Haj{\l}asz $\beta$-gradient of $I_\beta f$.
The method of proof 
is standard, we refer to
\cite{MR1372029,MR3108871},
 and it    requires the validity of
the rather technical \emph{kernel estimate}~\eqref{e.kernelLemma}.
However, such estimate (or a suitable variant) holds, 
for instance, if the measure
is $Q$-uniform or Ahlfors $Q$-regular,
see Example~\ref{ex.unif_meas} and Remark~\ref{RemarkRieszRegularSpace}.  

\begin{lemma}\label{l.MaxFunctionIsHajlaszGradient}
  Let $0<\beta<\eta$ and let $f$ be  
a nonnegative measurable function
on $X$ such that $I_\beta f$ is finite everywhere in $X$.   
Assume that there exists $c_K>0$ such that
for all $w,y\in X$, $w\not=y$, we have
\begin{equation}\label{e.kernelLemma}
\left\lvert \frac{d(w,z)^\beta}{\mu(B(w,d(w,z)))}-\frac{d(y,z)^\beta}{\mu(B(y,d(y,z)))}   \right\rvert
\le c_K \:  \frac{d(w,y)^\eta }{d(w,z)^{\eta-\beta}\mu(B(w,d(w,z)))}
\end{equation}
for all $z\in X\setminus B(w,2d(w,y))$. Then there is a constant  $C_1=C(c_\mu,\beta,\eta,c_K)>0$  such that 
\begin{equation}\label{e.h_forall}
\lvert I_\beta f(w)-I_\beta f(y)\rvert
\le C_1d(w,y)^{\beta}\left(Mf(w)+Mf(y)\right)\,
\end{equation}
for every $w,y\in X$.
In particular, it follows that $C_1 Mf$ is a Haj{\l}asz $\beta$-gradient of $I_\beta f$. 
\end{lemma}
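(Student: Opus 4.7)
Fix $w,y\in X$ with $w\neq y$ and set $r=d(w,y)$. The plan is to write the difference $I_\beta f(w)-I_\beta f(y)$ as a sum of a local contribution near $w$, a local contribution near $y$, and a tail on which the kernel estimate \eqref{e.kernelLemma} is available, and to bound each piece separately.

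First, I would split
\[
I_\beta f(w)-I_\beta f(y) = \mathrm{I}_w - \mathrm{I}_y + \mathrm{II},
\]
where
\[
\mathrm{I}_w = \int_{B(w,2r)} \frac{f(z)\,d(w,z)^\beta}{\mu(B(w,d(w,z)))}\,d\mu(z), \qquad
\mathrm{I}_y = \int_{B(w,2r)} \frac{f(z)\,d(y,z)^\beta}{\mu(B(y,d(y,z)))}\,d\mu(z),
\]
and $\mathrm{II}$ is the integral of the difference of the two kernels over $X\setminus B(w,2r)$. Lemma~\ref{l.LocalRiesz_via_MaxFunct} immediately gives $\mathrm{I}_w \le C(c_\mu,\beta)\,r^\beta Mf(w)$. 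For $\mathrm{I}_y$ I would enlarge the domain using $B(w,2r)\subset B(y,3r)$ and apply Lemma~\ref{l.LocalRiesz_via_MaxFunct} at the point $y$ with radius $3r$, obtaining $\mathrm{I}_y\le C(c_\mu,\beta)\,r^\beta Mf(y)$.

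For the tail $\mathrm{II}$, note that whenever $z\in X\setminus B(w,2r)$ one has $z\in X\setminus B(w,2d(w,y))$, so the kernel estimate \eqref{e.kernelLemma} applies and yields
\[
|\mathrm{II}| \le c_K \, r^{\eta} \int_{X\setminus B(w,2r)} \frac{f(z)}{d(w,z)^{\eta-\beta}\,\mu(B(w,d(w,z)))}\,d\mu(z).
\]
I would then decompose the domain into dyadic annuli $A_i = B(w,2^{i+1}r)\setminus B(w,2^i r)$ for $i\ge 1$. On $A_i$ one has $d(w,z)\simeq 2^i r$ and $\mu(B(w,d(w,z)))\simeq \mu(B(w,2^i r))$ up to the doubling constant, so
\[
\int_{A_i} \frac{f(z)}{d(w,z)^{\eta-\beta}\mu(B(w,d(w,z)))}\,d\mu(z) \le \frac{C(c_\mu)}{(2^i r)^{\eta-\beta}\,\mu(B(w,2^{i+1}r))}\int_{B(w,2^{i+1}r)} f(z)\,d\mu(z).
\]
The last integral is bounded by $\mu(B(w,2^{i+1}r))\,Mf(w)$, which cancels the volume factor and leaves $(2^i r)^{-(\eta-\beta)} Mf(w)$. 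Summing over $i\ge 1$ and multiplying by $c_K r^\eta$ yields, because $\eta>\beta$, the geometric series bound
\[
|\mathrm{II}| \le C(c_\mu,\beta,\eta,c_K)\, r^\beta\, Mf(w).
\]

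Combining the three estimates proves \eqref{e.h_forall} with $C_1=C(c_\mu,\beta,\eta,c_K)$. The final claim follows immediately: since \eqref{e.h_forall} holds for every $w,y\in X$, it holds in particular outside a null set, so $C_1 Mf\in\mathcal{D}_H^\beta(I_\beta f)$ by the definition of a Haj{\l}asz $\beta$-gradient. The only nontrivial step is the tail bound, where the precise form of \eqref{e.kernelLemma} (with the factor $d(w,z)^{\eta-\beta}$ in the denominator and $\eta>\beta$) is what makes the dyadic series summable and forces the use of an auxiliary exponent $\eta$ strictly larger than $\beta$.
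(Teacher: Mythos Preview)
Your argument is correct and follows essentially the same route as the paper: split the difference over $B(w,2d(w,y))$ and its complement, apply Lemma~\ref{l.LocalRiesz_via_MaxFunct} to the two local pieces (using $B(w,2r)\subset B(y,3r)$ for the second), and handle the tail via the kernel estimate~\eqref{e.kernelLemma} followed by a dyadic annular decomposition and a geometric series summable because $\eta>\beta$. The only cosmetic difference is that the paper indexes the annuli as $B(w,r_{j+1})\setminus B(w,r_j)$ with $r_j=2^{j+1}d(w,y)$, $j\ge 0$, which coincides with your $A_i$, $i\ge 1$.
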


\begin{proof}
Let $w,y\in X$ with $w\not=y$. Then,
\begin{align*}
\lvert I_\beta f(w)-I_\beta f(y)\rvert &= 
\left\lvert \int_X \frac{f(z)d(w,z)^\beta}{\mu(B(w,d(w,z)))}\,d\mu(z)
-\int_X \frac{f(z)d(y,z)^\beta}{\mu(B(y,d(y,z)))}\,d\mu(z)\right\rvert\\&
\le \int_{B(w,2d(w,y))}\frac{f(z)d(w,z)^\beta}{\mu(B(w,d(w,z)))}\,d\mu(z)
+\int_{B(w,2d(w,y))}\frac{f(z)d(y,z)^\beta}{\mu(B(y,d(y,z)))}\,d\mu(z)\\
&\qquad + \int_{X\setminus B(w,2d(w,y))}
f(z)\left\lvert \frac{d(w,z)^\beta}{\mu(B(w,d(w,z)))}-\frac{d(y,z)^\beta}{\mu(B(y,d(y,z)))}   \right\rvert\,d\mu(z)\,.
\end{align*}

By Lemma \ref{l.LocalRiesz_via_MaxFunct}, we have
\[
\int_{B(w,2d(w,y))}\frac{f(z)d(w,z)^\beta}{\mu(B(w,d(w,z)))}\,d\mu(z)
\le C(c_\mu,\beta)d(w,y)^\beta Mf(w)
\]
and
\begin{align*}
\int_{B(w,2d(w,y))}\frac{f(z)d(y,z)^\beta}{\mu(B(y,d(y,z)))}\,d\mu(z)
&   \le \int_{B(y,3d(w,y))}\frac{f(z)d(y,z)^\beta}{\mu(B(y,d(y,z)))}\,d\mu(z)   \\
& \le C(c_\mu,\beta)d(w,y)^\beta Mf(y)\,.
\end{align*}
  On the other hand, the assumed kernel estimate
\eqref{e.kernelLemma} gives  
\begin{align*}
\int_{X\setminus B(w,2d(w,y))} &
f(z)\left\lvert \frac{d(w,z)^\beta}{\mu(B(w,d(w,z)))}-\frac{d(y,z)^\beta}{\mu(B(y,d(y,z)))}   \right\rvert \,d\mu(z)\\
& \le c_K d(w,y)^\eta\int_{X\setminus B(w,2d(w,y))}
 \frac{ f(z)d(w,z)^{\beta-\eta}}{\mu(B(w,d(w,z)))}\,d\mu(z)\,.
\end{align*}
Write $r_j=2^{j+1}d(w,y)$ for every $j=0,1,\ldots$. Since $\eta>\beta$, we have
\begin{align*}
\int_{X\setminus B(w,2d(w,y))}
 \frac{ f(z)d(w,z)^{\beta-\eta}}{\mu(B(w,d(w,z)))}\,d\mu(z)
 &\le \sum_{j=0}^\infty (r_j)^{\beta-\eta}\int_{B(w,r_{j+1})\setminus B(w,r_j)}
  \frac{ f(z)}{\mu(B(w,r_j))}\,d\mu(z)\\
  &\le c_\mu\sum_{j=0}^\infty (r_j)^{\beta-\eta}\vint_{B(w,r_{j+1})}
 f(z)\,d\mu(z)\\
& \le C(c_\mu,\beta,\eta)d(w,y)^{\beta-\eta} Mf(w)\,.
  \end{align*}
 By combining the estimates above, we obtain
\[
\lvert I_\beta f(w)-I_\beta f(y)\rvert
\le C(c_\mu,\beta,\eta,c_K)d(w,y)^{\beta}\left(Mf(w)+Mf(y)\right)\,
\]
for every $w,y\in X$.
That is, we have $C_1Mf\in\mathcal{D}^\beta_H(I_\beta f)$ with
$C_1=C(c_\mu,\beta,\eta,c_K)$.
\end{proof}

\begin{example}\label{ex.unif_meas}
Assume that $\mu$ is a $Q$-uniform measure  in $X$ for some $Q>0$, that is, there exists a constant $C_1>0$ such that 
\[
\mu(B(x,r))=C_1r^Q,\text{ for all }x\in X \text{ and    all   }r>0\,.
\]
  If  $0<\beta<\min\{Q,1\}$, then the kernel estimate \eqref{e.kernelLemma} holds with $\beta<\eta=1$.  
To show this, we proceed as in \cite[Lemma 4.2]{MR2047654}. By the mean-value theorem,
  for every $s,t>0$  
there exists $0<\theta<1$ such that
\begin{equation}\label{e.meanvt}
\lvert s^{\beta-Q}-t^{\beta-Q}\rvert
\le (Q-\beta)\lvert (1-\theta)s+\theta t\rvert^{\beta-Q-1}\lvert s-t\rvert\,.
\end{equation}
Fix $w,y\in X$, $w\not=y$, and  $z\in X\setminus B(w,2d(w,y))$. By  $Q$-uniformity of $\mu$ and inequality~\eqref{e.meanvt},
we have
\begin{align*}
\left\lvert \frac{d(w,z)^\beta}{\mu(B(w,d(w,z)))}-\frac{d(y,z)^\beta}{\mu(B(y,d(y,z)))}   \right\rvert
& = \frac{1}{C_1}\left\lvert d(w,z)^{\beta-Q}-d(y,z)^{\beta-Q}  \right\rvert
\\&\le C(C_1,Q,\beta)\frac{\lvert d(w,z)-d(y,z)\rvert}{d(w,z)^{Q-\beta+1}}
\\&\le C(C_1,Q,\beta) \frac{d(w,y)}{d(w,z)^{1-\beta}\mu(B(w,d(w,z)))}\,.
\end{align*}
As an example, the $n$-dimensional
Lebesgue measure in  $\R^n$ 
is  $n$-uniform. Uniform measures were first studied in \cite{MR890162}.
\end{example}

\begin{remark}\label{RemarkRieszRegularSpace}
  In an Ahlfors $Q$-regular  space $X$, with $Q>0$, 
the following version of the Riesz potential is often used: 
\[
\mathcal{I}_\beta f(x)=\int_X\frac{f(y)}{d(x,y)^{Q-\beta}}\,d\mu(y)\,,\quad x\in X\,,
\]
see \cite{MR2047654,MR1372029} and references therein. By the $Q$-regularity, 
  $\mathcal{I}_\beta f$ is %
pointwise comparable with the potential $I_\beta f$, that is,
there exists a constant $C>0$ such that 
\begin{equation}\label{eq.rieszes}
C^{-1}\mathcal{I}_\beta f(x)\le I_\beta f(x)\le C\mathcal{I}_\beta f(x)\,,
\end{equation}
for every $x\in X$.

  If $X$ is $Q$-regular and $0<\beta<\min\{Q,1\}$, then one can obtain results analogous  
to Lemma~\ref{l.LocalRiesz_via_MaxFunct} and Lemma~\ref{l.MaxFunctionIsHajlaszGradient} for $\mathcal{I}_\beta f$. 
  Observe   that the 
analogue of \eqref{e.kernelLemma} reads as
\[
\left\lvert \frac{1}{d(w,z)^{Q-\beta}}-\frac{1}{d(y,z)^{Q-\beta}}  \right\rvert
\le c_K \frac{d(w,y)^{\eta}}{d(w,z)^{Q-\beta+\eta}}\,, \quad w,\,y\in X,\ \ z\in X\setminus B(w,2d(w,y))\,.
\]
  This  automatically holds for $0<\beta<1=\eta$, see the reasoning in Example~\ref{ex.unif_meas} for a proof.  
\end{remark}

\section{Haj{\l}asz and Riesz capacities}\label{s.HRcap}

  Let $\Omega\subset X$ be a bounded open set and let   $F\subset\Omega$. 
One of the main objects in this paper is 
a capacity of $F$ relative to $\Omega$ defined via 
$\beta$-Haj\l asz gradients. 
  The following definition is from~\cite{CV2021}. In   the case $\beta=1$, such capacities were earlier considered
in \cite{MR2424909}.

\begin{definition}\label{d.varcap}
Let $1\le p<\infty$, $0<\beta\le 1$, and let $\Omega\subset X$  be a bounded open set.   
The variational Haj{\l}asz $(\beta,p)$-capacity of a  closed   subset $F\subset\Omega$ is 
\begin{equation*}\label{e.Hajlasz cap}
\cp_{\beta,p}(F,\Omega)=   \inf_u\inf_g   \int_X  g(x)^p\,d\mu(x)\,,
\end{equation*}
where the  infimums are taken  over all   $u\in \Lip_\beta(X)$,   with $u\ge 1$ in $F$, $u=0$ in $X\setminus \Omega$, and all $g\in\mathcal{D}_H^{\beta}(u)$.   If there are no such functions $u$, we set $\cp_{\beta,p}(F,\Omega)=\infty$.  
\end{definition}

\begin{remark}\label{r.varcap_properties}

 If  $u\in\Lip_\beta(X)$ and  $g\in\mathcal{D}^\beta_H(u)$, then $v=\max\{0,\min\{u,1\}\}$ is a $\beta$-H\"older function and $g\in\mathcal{D}^\beta_H(v)$. Hence, we may also assume $0\le u\le 1$ in Definition~\ref{d.varcap}.

\end{remark}

 The following lemma   shows that in connected spaces  
capacities of balls   are comparable   to suitably scaled measures of balls.

\begin{lemma}\label{l.Hequiv}
  Assume that the metric space $X$ is connected, and let   $1\le p<\infty$   and $0<\beta\le 1$.
Then there is a constant $C=C(c_\mu,p)>0$ such that
\begin{equation}\label{e.Hcomp}
C^{-1}  r^{-\beta p}\mu(B(x,r))\le \cp_{\beta,p}(\overline{B(x,r)},B(x,2r))
\le C r^{-\beta p}\mu(B(x,r))
\end{equation}
for all $x\in X$ and all  $0<r<(1/8)\diam(X)$. 
Moreover, the second inequality in \eqref{e.Hcomp} holds even if $X$ is not connected. 
\end{lemma}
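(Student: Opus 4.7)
The proof splits into the two inequalities in \eqref{e.Hcomp}.

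\textbf{Upper bound.} I will exhibit an explicit admissible pair $(u,g)$. Take the truncated cutoff
\[
u(y) = \min\bigl\{1,\max\{0,(2r-d(x,y))/r\}\bigr\}, \qquad y\in X,
\]
which is $(1/r)$-Lipschitz on $X$, satisfies $u\equiv 1$ on $\overline{B(x,r)}$, and vanishes on $X\setminus B(x,2r)$. I claim that
\[
g(y)=8^{1-\beta}r^{-\beta}\ch{B(x,4r)}(y)
\]
belongs to $\mathcal{D}_H^\beta(u)$. This is checked in three cases: (i) if $y,z\in B(x,4r)$, then $d(y,z)\le 8r$ lets one convert the Lipschitz bound to a H\"older one, $|u(y)-u(z)|\le r^{-1}d(y,z)\le 8^{1-\beta}r^{-\beta}d(y,z)^\beta$; (ii) if $y\in B(x,2r)$ and $z\notin B(x,4r)$, then $d(y,z)\ge 2r$, so $|u(y)-u(z)|\le 1\le (2r)^{-\beta}d(y,z)^\beta\le g(y)d(y,z)^\beta$; (iii) the remaining cases are trivial because $u(y)=u(z)=0$. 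Integrating $g^p$ over $B(x,4r)$ and applying the doubling condition \eqref{e.doubling} twice yields $\int_X g^p\,d\mu\le C(c_\mu,p)\,r^{-\beta p}\mu(B(x,r))$. Note this step did not use connectedness of $X$, which gives the last assertion of the lemma.

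\textbf{Lower bound.} Let $(u,g)$ be an arbitrary admissible pair for $\cp_{\beta,p}(\overline{B(x,r)},B(x,2r))$. By Remark~\ref{r.varcap_properties}, I may replace $u$ by $\max\{0,\min\{u,1\}\}$ and thus assume $0\le u\le 1$, in which case $u\equiv 1$ on $\overline{B(x,r)}$ and $u\equiv 0$ on $X\setminus B(x,2r)$. I apply the $(\beta,p,p)$-Poincar\'e inequality of Theorem~\ref{t.pp_poincare} on the ball $B=B(x,4r)$ (which is a legitimate ball in $X$ because $r<(1/8)\diam(X)$), obtaining
\[
\int_{B(x,4r)}|u(y)-u_{B(x,4r)}|^p\,d\mu(y)\le 2^p(8r)^{\beta p}\int_X g^p\,d\mu.
\]
Using $0\le u\le\ch{B(x,2r)}$ and the reverse doubling condition \eqref{e.rev_dbl_decay} applied at radius $4r<\diam(X)/2$, one finds
\[
u_{B(x,4r)}\le\frac{\mu(B(x,2r))}{\mu(B(x,4r))}\le c_R<1,
\]
where $c_R=C(c_\mu)<1$ depends only on $c_\mu$. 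Hence $|u(y)-u_{B(x,4r)}|\ge 1-c_R$ for every $y\in\overline{B(x,r)}$, so the left-hand side of the Poincar\'e inequality is at least $(1-c_R)^p\mu(B(x,r))$. Rearranging and taking the infimum over admissible $(u,g)$ gives $r^{-\beta p}\mu(B(x,r))\le C(c_\mu,p)\,\cp_{\beta,p}(\overline{B(x,r)},B(x,2r))$.

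\textbf{Main obstacle.} The only subtle point is producing a Hajl\l asz $\beta$-gradient for the cutoff $u$: the Lipschitz/H\"older conversion works cleanly only when both evaluation points sit in a common ball of controlled diameter. The remedy is to let $g$ live on the enlarged ball $B(x,4r)$ rather than on $B(x,2r)$, so that the ``annular'' case $y\in\operatorname{spt}(u),\ z\notin B(x,4r)$ comes with the distance buffer $d(y,z)\ge 2r$ that makes the crude bound $u(y)\le 1$ strong enough. Everything else is bookkeeping: the doubling condition handles the measure of the enlargement in the upper bound, while connectedness (through reverse doubling) enters the lower bound precisely to ensure that $u$ cannot average to something too close to its peak value on $B(x,4r)$.
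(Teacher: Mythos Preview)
Your proof is correct. The upper bound differs in detail from the paper's: the paper uses the intrinsically $\beta$-H\"older cutoff
\[
u(y)=\max\bigl\{0,\,1-r^{-\beta}\dist(y,B(x,r))^\beta\bigr\},
\]
for which $g=r^{-\beta}\ch{B(x,2r)}$ is already a Haj{\l}asz $\beta$-gradient (by \cite[Lemma~3.3]{CV2021}), so no enlargement to $B(x,4r)$ or case analysis is needed. Your Lipschitz cutoff works too; the price is the annular case (ii), which you handle correctly by enlarging the support of $g$ to $B(x,4r)$ and using the distance buffer $d(y,z)\ge 2r$. Both approaches give constants depending only on $c_\mu$ and $p$.

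For the lower bound the paper simply cites \cite[Example~4.5]{CV2021}, whereas you give a self-contained argument via the $(\beta,p,p)$-Poincar\'e inequality (Theorem~\ref{t.pp_poincare}) on $B(x,4r)$ combined with reverse doubling~\eqref{e.rev_dbl_decay} to force $u_{B(x,4r)}\le c_R<1$. This is exactly the mechanism one expects the cited reference to use, and your argument is clean: the restriction $r<(1/8)\diam(X)$ is precisely what guarantees $4r<\diam(X)/2$ so that~\eqref{e.rev_dbl_decay} applies. One cosmetic remark: your parenthetical ``which is a legitimate ball'' is slightly off---$B(x,4r)$ is always a ball; what matters is the radius bound $4r<\diam(X)/2$ for reverse doubling, which you use correctly a few lines later.
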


\begin{proof}
The first inequality in \eqref{e.Hcomp} is a consequence of
\cite[Example 4.5]{CV2021}, where connectivity of $X$ is used.
In order to show the second inequality in \eqref{e.Hcomp}, we
let $x\in X$ and $0<r<(1/8)\diam(X)$. Define
\[
u(y)=\max\left\{0,1-r^{-\beta} \: {\dist(y,B(x,r))^\beta}\right\}\ ,\qquad y\in X\,.\]
Then $u\in \Lip_\beta(X)$, 
  with a constant $r^{-\beta}$, and therefore $g=r^{-\beta}\ch{B(x,2r)}\in \mathcal{D}^\beta_H(u)$
by \cite[Lemma~3.3]{CV2021}. Since  
$u=1$ in $\overline{B(x,r)}$ and
$u=0$ in $X\setminus B(x,2r)$,
  we conclude that  
\[
\cp_{\beta,p}(\overline{B(x,r)},B(x,2r))
\le \int_X g(x)^p\,d\mu(x)\le r^{-\beta p}\mu(B(x,2r))\le c_\mu\,r^{-\beta p}\mu(B(x,r))\,.\qedhere
\]
\end{proof}

 Another capacity   that   we use in this paper is the Riesz  
$(\beta,p)$-capacity. 
Such capacities   are well known in Euclidean spaces; 
we refer to \cite{MR1411441} and \cite{MR946438}, and references therein. In metric spaces, variants of Riesz
capacities have been studied for instance in \cite{Nuutinen2015TheRC}.

\begin{definition}\label{d.RieszCap}
Let  $F\subset X$, $\beta>0$ and $p\ge 1$. The Riesz  
$(\beta,p)$-capacity of $F$ is 
\[
R_{\beta,p}(F)=\inf \bigl\{ \lVert f\rVert_p^p\colon f\ge 0 \text{ and }I_\beta f\ge 1\text{ on }F \bigr\}\,,
\]
where $\lVert f\rVert_p =\lVert f\rVert_{L^p(X)}$ is the Lebesgue $p$-norm of $f$ on $X$.
\end{definition}

  See Lemma~\ref{c.balls} for  an analogue of Lemma~\ref{l.Hequiv} for the Riesz $(\beta,p)$-capacities.

\section{Comparability of Riesz and Haj{\l}asz capacities}\label{s.cap_are_comparable}

In this section we show that, under certain geometric hypotheses on the metric and  measure, 
the capacities defined in terms of Riesz potentials and Haj{\l}asz gradients are comparable.
  One direction of the comparison, given in Theorem~\ref{p.HaCap_geq_RieszCap},
holds under much weaker assumptions than the other one in Theorem~\ref{th.HCapLessRCap}.
In particular, in the latter the kernel estimate~\eqref{e.kernelLemma} is assumed, together with
a reverse doubling condition. Recall also that we assume throughout the paper that the measure $\mu$
is doubling, with a constant $c_\mu$. 

The following chaining lemma, 
which will be applied in the proof of Theorem~\ref{p.HaCap_geq_RieszCap}, is a 
straightforward modification of a result in \cite[p.~30]{MR1683160}; 
hence we omit the proof.

\begin{lemma}
\label{l.chain.estimate.improved}
  Assume that the metric space $X$ is connected. Then  
there exists a constant $M$ such that for all $y\in X$ and all  $0<\rho<R<(3/8)\diam (X)$, there exists $k= k(c_\mu,y,\rho,R) \in \N$ and balls $B_0,...,B_k$ 
  satisfying the following properties:  
\begin{itemize}
\item[(i)] $B_0\subset X\setminus B(y,R)$ and $B_k\subset B(y,\rho)$,
\item[(ii)] $M^{-1}\diam(B_i)\le d(y,B_i)\le M\diam(B_i)$ for all
$i=0,1,2,\ldots,k$,
\item[(iii)] there is a ball $R_i\subset B_i\cap B_{i+1}$ such
that $B_i\cup B_{i+1}\subset MR_i$ for 
all $i=0,1,2,\ldots, k-1$,
\item[(iv)] No point of $X$ belongs to more than
$M$ balls $B_i$.
\end{itemize}
\end{lemma}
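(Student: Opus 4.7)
My plan is to mimic the Whitney-type chaining construction in \cite[p.~30]{MR1683160}, adapting the endpoints to $\rho$ and $R$. First I set dyadic scales $r_j = 2^{-j} R$ for $j = 0, 1, \ldots, k$, where $k$ is the least nonnegative integer with $r_k \le \rho$; thus $r_0 = R$ and $r_k$ is comparable to $\rho$. Since the continuous function $d(\cdot, y)\colon X \to [0,\infty)$ has connected image (as $X$ is connected) and this image contains $[0, \diam(X)/2]$ by the triangle inequality $\diam(X) \le 2\sup_{x\in X} d(x,y)$, and since $R < (3/8)\diam(X)$ gives $(5/4) r_j < \diam(X)/2$, I can pick a point $x_j \in X$ with $d(x_j, y) = (5/4) r_j$ for every $j$.

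Next, for each $j$ I construct a chain of balls at the common scale $r_j / 10$, lying in the shell $\{x : r_j \le d(x, y) \le (5/4) r_j\}$ and connecting $x_j$ to a point close to $x_{j+1}$. The tool is the standard consequence of connectedness that any two points in $X$ are joined by an $\epsilon$-chain (a finite sequence of points with consecutive distance $< \epsilon$), valid for every $\epsilon > 0$; taking $\epsilon \le r_j/20$ makes the balls $B(\cdot, r_j/10)$ about the chain points overlap consecutively. The doubling property of $\mu$ bounds the number of such balls at each scale by a constant independent of $j$, since the chain stays within a bounded number of balls $B(y, C r_j)$ of comparable size. Concatenating these shell chains for $j = 0, 1, \ldots, k$ and relabeling so that the outermost ball comes first produces the sequence $B_0, B_1, \ldots, B_k$.

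Properties (i)--(iv) then follow: (i) holds because the outermost and innermost balls sit in shells disjoint, respectively, from $\overline{B(y, R)}$ and from $X \setminus B(y, \rho)$; (ii) follows since each $B_i$ lies in a spherical shell with inner and outer radii comparable to $\diam(B_i)$; (iii) is ensured by the $\epsilon$-chain overlap at each common scale together with close placement of balls across scale transitions; and (iv) follows from the $O(1)$ count at each scale combined with the geometric decay of scales (balls at sufficiently different scales cannot meet, since their distances to $y$ are then incompatible).

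The main technical obstacle is reconciling (ii) and (iii) simultaneously. Property (ii) forces ball radii to scale with the distance to $y$, which produces a size mismatch at each dyadic transition, while (iii) requires consecutive balls to overlap in a ball of comparable size. Resolving this is precisely the point of the lateral insertion of same-scale balls within each shell before transitioning to the next scale, which uses connectedness for the existence of the $\epsilon$-chains and the doubling property to control the number of inserted balls.
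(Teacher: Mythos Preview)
The paper does not prove this lemma; it is stated with the proof omitted, citing \cite[p.~30]{MR1683160} as a straightforward modification. So there is no argument in the paper to compare your sketch against directly.

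Your outline follows the right general plan, but there is a real gap in the step where you claim that the $\epsilon$-chain connecting $x_j$ to a point near $x_{j+1}$ ``stays within a bounded number of balls $B(y,Cr_j)$'' and lies in the shell $\{r_j\le d(\cdot,y)\le(5/4)r_j\}$. Connectedness of $X$ furnishes \emph{some} $\epsilon$-chain between any two points, but it gives no control on where that chain goes: annuli in connected doubling spaces need not themselves be connected (nor $\epsilon$-chain-connected), and an $\epsilon$-chain between two points at distance $\sim r_j$ from $y$ may be forced to pass very close to $y$ or very far from it. If the chain leaves the annulus, balls of the fixed radius $r_j/10$ along it violate (ii); if instead you scale their radii with the distance to $y$, the uniform size comparability required for (iii) at scale transitions is lost. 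There is also an internal inconsistency: $x_{j+1}$ sits at distance $(5/8)r_j$ from $y$, outside the shell $\{r_j\le d(\cdot,y)\le(5/4)r_j\}$, so no chain can both remain in that shell and reach $x_{j+1}$. Finally, even if the containment held, doubling bounds the number of \emph{pairwise disjoint} balls of radius $r_j/10$ in $B(y,Cr_j)$, not the length of a chain that may revisit regions; you still need a loop-removal step to obtain (iv). A workable variant is to take a \emph{single} $\epsilon$-chain from a point outside $B(y,R)$ toward $y$, stop it when it first enters $B(y,\rho)$, attach to each chain point $p$ a ball of radius proportional to $d(y,p)$, then thin the sequence so consecutive centers are roughly a ball-radius apart and delete loops; this yields (ii)--(iv) with constants depending only on $c_\mu$, while $k$ may depend on $y$ as the statement allows.
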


The first main result of this section  
gives an upper bound for the Riesz 
capacity in terms of the Haj{\l}asz capacity in connected metric spaces.

\begin{theorem}\label{p.HaCap_geq_RieszCap}
Assume that $X$ is connected and 
let $1\le p<\infty$ and $0<\beta\le 1$.
Moreover, let  
$E\subset X$ be a closed set, $x\in E$ and $0<r<(1/8)\diam(X)$.  Then
\begin{equation}\label{e.HaCap_geq_RieszCap}
\cp_{\beta,p} \big( E\cap \overline{B(x,r)},B(x,2r) \big)\ge C(c_\mu,p)R_{\beta,p}(E\cap \overline{B(x,r)})\,.
\end{equation}
\end{theorem}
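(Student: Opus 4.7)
The plan is to prove the pointwise estimate $u(z)\le C\, I_\beta g(z)$ for every $z\in E\cap\overline{B(x,r)}$, where $(u,g)$ is any admissible pair for $\cp_{\beta,p}(E\cap\overline{B(x,r)},B(x,2r))$, i.e.\ $u\in\Lip_\beta(X)$ with $u\ge 1$ on $E\cap\overline{B(x,r)}$ and $u\equiv 0$ on $X\setminus B(x,2r)$, together with $g\in\mathcal{D}_H^{\beta}(u)$. Once this pointwise bound is in hand, the function $Cg$ is admissible for the Riesz $(\beta,p)$-capacity of $E\cap\overline{B(x,r)}$, giving
\[
R_{\beta,p}(E\cap\overline{B(x,r)})\le C^p\int_X g^p\,d\mu\,,
\]
and taking the infimum over admissible pairs yields~\eqref{e.HaCap_geq_RieszCap}. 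By Remark~\ref{r.varcap_properties} we may further assume $0\le u\le 1$.

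The first preparatory step is to apply Theorem~\ref{t.pp_poincare} with $p=1$ to obtain the $(1,\beta)$-Poincar\'e inequality
\[
\vint_B \lvert u-u_B\rvert\,d\mu\le 2\diam(B)^\beta\vint_B g\,d\mu
\]
for every ball $B\subset X$. Using $p=1$ here is essential, since the $1/p$ root that would appear from the $(p,p)$-version would prevent the right-hand side from matching the Riesz-potential kernel.

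Next, fix $z\in E\cap\overline{B(x,r)}$ and apply Lemma~\ref{l.chain.estimate.improved} with $y=z$, $R=3r$ (valid since $3r<(3/8)\diam(X)$), and $\rho\to 0^+$ to obtain chains $B_0,\dots,B_{k(\rho)}$. Since $z\in\overline{B(x,r)}$, any $w\in X\setminus B(z,3r)$ satisfies $d(x,w)\ge 2r$, so $u(w)=0$; the inclusion $B_0\subset X\setminus B(z,3r)$ therefore gives $u_{B_0}=0$, while H\"older continuity of $u$ together with $B_{k(\rho)}\subset B(z,\rho)$ forces $u_{B_{k(\rho)}}\to u(z)$ as $\rho\to 0^+$. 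Telescoping $u_{B_{k(\rho)}}-u_{B_0}=\sum_{i=0}^{k(\rho)-1}(u_{B_{i+1}}-u_{B_i})$, splitting each difference through the intermediate ball $R_i\subset B_i\cap B_{i+1}$ of property~(iii), and combining doubling with $B_i\cup B_{i+1}\subset MR_i$ and the $(1,\beta)$-Poincar\'e inequality yields
\[
\lvert u_{B_{i+1}}-u_{B_i}\rvert\le C(c_\mu)\Bigl(\diam(B_i)^\beta\vint_{B_i}g\,d\mu+\diam(B_{i+1})^\beta\vint_{B_{i+1}}g\,d\mu\Bigr).
\]

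Finally, property~(ii) gives $d(z,w)\asymp\diam(B_i)$ for $w\in B_i$, and doubling gives $\mu(B(z,d(z,w)))\asymp\mu(B_i)$, so each summand above is controlled by
\[
C(c_\mu)\int_{B_i}\frac{d(z,w)^\beta}{\mu(B(z,d(z,w)))}\,g(w)\,d\mu(w).
\]
Summing over $i$ and invoking the bounded overlap in property~(iv) produces $u(z)\le C(c_\mu)\,I_\beta g(z)$ uniformly in $\rho$, which completes the pointwise estimate. The main obstacle I foresee is the multi-scale bookkeeping in this last step: one must verify that the various doubling comparisons between $\mu(R_i)$, $\mu(B_i)$, $\mu(B_{i+1})$, and $\mu(B(z,d(z,w)))$, combined with the distance comparisons from~(ii), collapse into constants depending only on $c_\mu$ uniformly across $i$. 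The case $I_\beta g(z)=\infty$ is trivially consistent with the bound; when $I_\beta g(z)<\infty$, the telescoping limit passes to $u(z)$ by the uniform partial-sum bound and continuity of $u$.
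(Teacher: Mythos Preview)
Your proposal is correct and follows essentially the same approach as the paper's proof: both fix a point in $E\cap\overline{B(x,r)}$, apply the chaining Lemma~\ref{l.chain.estimate.improved} with $R=3r$ and $\rho\to 0$, use $B_0\subset X\setminus B(x,2r)$ to get $u_{B_0}=0$, telescope via the intermediate balls $R_i$, apply the $(\beta,1,1)$-Poincar\'e inequality from Theorem~\ref{t.pp_poincare}, and convert each summand to a piece of the Riesz potential using property~(ii) and doubling. The only cosmetic difference is that the paper handles the endpoint by the explicit remainder $\lvert u(y)-u_{B_k}\rvert\le\kappa\rho^\beta$ (with $\kappa$ the H\"older constant of $u$) and then lets $\rho\to 0$, whereas you phrase this as $u_{B_{k(\rho)}}\to u(z)$; these are equivalent, and the bookkeeping you flag as the ``main obstacle'' is indeed routine.
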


\begin{proof}
Let $u\in \Lip_\beta(X)$ be such that $0\le u\le 1$, $u=1$ in $E\cap \overline{B(x,r)}$ and $u=0$ outside of $B(x,2r)$.
Denote by $\kappa$ the $\beta$-H\"older constant of $u$ in $X$ and let  $g\in\mathcal{D}_H^{\beta}(u)$. 
Theorem \ref{t.pp_poincare} implies that the
Haj{\l}asz $(\beta,1,1)$-Poincar\'e inequality
\begin{equation}\label{1-1poincare}
\vint_B\lvert u-u_B\rvert\,d\mu
\le 2\diam(B)^\beta \vint_{B} g\,d\mu
\end{equation} 
holds for all balls $B\subset X$. 
We claim that \eqref{1-1poincare}, together with the facts that $u=0$ outside of $B(x,2r)$ and $r< (1/8)\diam(X)$, give the following inequality
\begin{equation}\label{e.repres}
\lvert u(y)\rvert\le C(c_\mu)I_\beta g(y)\,,
\end{equation}
for every $y\in \overline{B(x,r)}$.  
We postpone the proof of \eqref{e.repres}
and finish the proof of the theorem while assuming \eqref{e.repres}.
It follows from the properties of $u$ and \eqref{e.repres} that
$I_\beta (C(c_\mu) g)\ge 1$ in $E\cap  \overline{B(x,r)}$. Thus
\[
C(c_\mu,p)\int_X g^p\,d\mu=\lVert C(c_\mu) g\rVert_p^p \ge R_{\beta,p}(E\cap \overline{B(x,r)})\,.
\]
Taking infimum over all $u$ and $g\in\mathcal{D}_H^{\beta}(u)$, we get
\begin{equation}
\cp_{\beta,p} \big( E\cap \overline{B(x,r)},B(x,2r) \big)\ge C(c_\mu,p)R_{\beta,p}(E\cap \overline{B(x,r)})\,.
\end{equation}

For convenience of the reader, we 
give the proof of \eqref{e.repres}, which is contained in \cite{MR2569546}.
Let $y\in \overline{B(x,r)}$, $R=3r< (3/8)\diam(X)$ and $0<\rho<R$,  
  and let $B_0, B_1, \ldots,B_k$ be the corresponding family of balls given by Lemma~\ref{l.chain.estimate.improved}.
  
Notice that since $B_0\subset X\setminus B(y,3r) \subset  X\setminus B(x,2r)$,  we have  $u=0$ on $B_0$. 
Then, we can write
\begin{align*}
\lvert u(y)\rvert=\lvert u(y)-u_{B_0}\rvert
&\le \sum_{i=0}^{k-1} \lvert u_{B_{i+1}}-u_{B_i}\rvert + 
\lvert u(y)-u_{B_k}\rvert\\
&\le \sum_{i=0}^{k-1} \left(\lvert u_{B_{i+1}}-u_{R_i}\rvert + 
\lvert u_{B_i}-u_{R_i}\rvert\right) + 
\kappa \rho^\beta\\
&\le C(c_\mu,M)\sum_{i=0}^k \vint_{B_i} \lvert u-u_{B_i}\rvert\,d\mu + \kappa \rho^\beta\,.
\end{align*}
Applying the Poincar\'e inequality \eqref{1-1poincare}, we get 
\begin{align*}
\lvert u(y)\rvert
&\le C(c_\mu,M)\sum_{i=0}^k \diam(B_i)^\beta \vint_{B_i} g\,d\mu
+ \kappa \rho^\beta\\
&= C(c_\mu,M)\sum_{i=0}^k  \int_{B_i} \frac{g(z) \diam(B_i)^\beta}{\mu(B_i)}\,d\mu(z)
+ \kappa \rho^\beta\\
&\le C(c_\mu,M)\sum_{i=0}^k  \int_{B_i} 
\frac{g(z)d(y,z)^\beta}{\mu(B(y,d(y,z)))}\,d\mu(z)
+ \kappa \rho^\beta\\
&\le C(c_\mu,M)I_\beta g(y)+\kappa \rho^\beta\,,
\end{align*}
  and inequality \eqref{e.repres} follows
by letting $\rho\to 0$.  
\end{proof}

Under additional assumptions on $X$ inequality \eqref{e.HaCap_geq_RieszCap} can be reversed,
  see Theorem~\ref{th.HCapLessRCap}.   Before a precise formulation of this   reverse estimate,  
we prove an auxiliary lemma.

\begin{lemma}\label{CommonEstimate}
Let  $1\le p<\infty$  and $0<\beta\le 1$, and 
  assume that %
$\mu$ satisfies the quantitative reverse doubling condition \eqref{e.reverse_doubling}
for some exponent $\sigma>\beta p$. 
  Let   $x\in X$,  $0<r<(1/8)\diam(X)$  and   let $f\in L^p(X)$ be   a nonnegative function such that
\begin{equation}\label{e_Integral}
\int_{X\setminus B(z,r/5)} \frac{f(y)d(z,y)^\beta}{\mu(B(z,d(z,y)))}\,d\mu(y)\geq \frac{1}{2}
\end{equation}
for some $z\in \overline{B(x,r)}$. Then  
there exists a constant $C=C(c_\mu,c_\sigma,\beta,\sigma,p)$ such that 
\[\|f\|^p_{L^p(X)}\ge Cr^{-\beta p}\mu(B(x,r))\,.\]
\end{lemma}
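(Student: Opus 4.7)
The plan is to bound the integral in~\eqref{e_Integral} from above via H\"older's inequality, peeling $f$ off the Riesz kernel, and then to control the remaining kernel integral by means of a dyadic decomposition of $X\setminus B(z,r/5)$ together with the quantitative reverse doubling condition~\eqref{e.reverse_doubling}. The hypothesis $\sigma>\beta p$ will show up in precisely one place: as the convergence threshold of a geometric series.

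Assume first that $1<p<\infty$ and set $p'=p/(p-1)$. H\"older's inequality applied to~\eqref{e_Integral} yields
\[
\frac12 \le \lVert f\rVert_p \: J^{1/p'},\qquad J:=\int_{X\setminus B(z,r/5)}\frac{d(z,y)^{\beta p'}}{\mu(B(z,d(z,y)))^{p'}}\,d\mu(y).
\]
Raising to the $p$-th power and using $p/p'=p-1$ and $(p'-1)(p-1)=1$, the problem reduces to proving
\[
J \le C\: r^{\beta p'}\,\mu(B(z,r/5))^{1-p'}.
\]

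To bound $J$, I would set $r_j = 2^j r/5$ for $j\ge 0$ and split $X\setminus B(z,r/5)=\bigcup_{j\ge 0} A_j$, where $A_j=B(z,r_{j+1})\setminus B(z,r_j)$. On $A_j$ one has $r_j\le d(z,y)<r_{j+1}$ and $\mu(B(z,d(z,y)))\ge\mu(B(z,r_j))$, so that, together with the doubling estimate $\mu(A_j)\le c_\mu\mu(B(z,r_j))$,
\[
\int_{A_j}\frac{d(z,y)^{\beta p'}}{\mu(B(z,d(z,y)))^{p'}}\,d\mu(y) \le \frac{C\: r_j^{\beta p'}}{\mu(B(z,r_j))^{p'-1}}.
\]
The reverse doubling~\eqref{e.reverse_doubling} applied to the radii $r/5$ and $r_j$, which is licit while $r_j\le 2\diam(X)$ (for larger $j$ the annulus $A_j$ is eventually empty because $r<(1/8)\diam(X)$), gives $\mu(B(z,r_j))\ge c_\sigma^{-1} 2^{j\sigma}\mu(B(z,r/5))$. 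Substituting produces a geometric series in $j$ with ratio $2^{\beta p'-\sigma(p'-1)}$; the assumption $\sigma>\beta p = \beta p'/(p'-1)$ is exactly what makes this ratio strictly less than $1$, and the claimed bound on $J$ follows.

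Plugging this back into the H\"older estimate yields $\lVert f\rVert_p^p\ge C\,r^{-\beta p}\mu(B(z,r/5))$. Finally, since $z\in\overline{B(x,r)}$ one has $B(x,r)\subset B(z,2r)$, and a bounded number of applications of the doubling condition gives $\mu(B(z,r/5))\ge c\,\mu(B(x,r))$, completing the proof for $p>1$. The case $p=1$ is handled directly: using the reverse doubling with $\sigma>\beta$ one gets the pointwise estimate
\[
\sup_{y\in X\setminus B(z,r/5)}\frac{d(z,y)^\beta}{\mu(B(z,d(z,y)))}\le \frac{C\,r^\beta}{\mu(B(z,r/5))},
\]
and inserting this into~\eqref{e_Integral} yields the conclusion via $L^1$--$L^\infty$ duality. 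The one point I expect to require genuine care is the exponent bookkeeping: making transparent that the sharp threshold $\sigma>\beta p$ in the hypothesis is exactly the one demanded by the convergence of the dyadic tail sum, and that all other steps rely only on doubling.
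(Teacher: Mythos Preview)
Your argument is correct and rests on the same core ingredients as the paper's proof: a dyadic decomposition of $X\setminus B(z,r/5)$ into annuli $B(z,2^{j+1}r/5)\setminus B(z,2^j r/5)$, the quantitative reverse doubling~\eqref{e.reverse_doubling} to control the measure growth along the chain, and H\"older's inequality to separate $f$ from the kernel. The convergence threshold you isolate, $\sigma>\beta p$, is exactly the one appearing in the paper.

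The execution differs in one respect worth noting. You apply H\"older \emph{globally} at the outset, reducing matters to an $L^{p'}$ estimate on the kernel alone, and then handle $p=1$ separately via an $L^\infty$ bound on the kernel. The paper instead decomposes first and applies H\"older's inequality on each annulus (bounding $\int_{B(z,r_{k+1})} f\,d\mu$ by $\mu(B(z,r_{k+1}))^{1-1/p}\lVert f\rVert_p$), which has the minor advantage of treating all $1\le p<\infty$ uniformly, without a case split. Your global-H\"older route is arguably cleaner conceptually and gives a marginally sharper intermediate inequality, but both lead to the same geometric tail and the same constant dependence. One small remark: the reason the dyadic sum is effectively finite in the bounded case is simply that $A_j=\emptyset$ once $r_j>\diam(X)$; the hypothesis $r<(1/8)\diam(X)$ is not the mechanism here.
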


\begin{proof}
Throughout the proof, we assume
that $X$ is bounded. The case when $X$ is unbounded
is treated in a similar way,
  but the essential difference is that then the sums below will have infinitely many terms.
Write   $r_k=2^kr/5$
for every $k=0,1,2,\ldots,K+1$, where $K\in\N$ is
chosen such that $r_{K} \leq \diam (X)< 2r_{K}$. Note that $X=B(z,r_{K+1})$
  and recall   that $z\in \overline{B(x,r)}$. Hence, 
by the doubling condition for the measure $\mu$ and by \eqref{e_Integral}, we have
\begin{align*}
\frac{1}{2}\mu(B(x,r))^\frac{1}{p}&\le \frac{c_\mu^4}{2} \mu(B(z,r/5))^{\frac{1}{p}}\\
&\le c_\mu^4 \mu(B(z,r/5))^{\frac{1}{p}}\int_{X\setminus B(z,r/5)} \frac{f(y)d(z,y)^\beta}{\mu(B(z,d(z,y)))}\,d\mu(y)\\
&= c_\mu^4 \sum_{k=0}^{K} \mu(B(z,r/5))^{\frac{1}{p}}\int_{B(z,r_{k+1})\setminus B(z,r_{k})} \frac{f(y)d(z,y)^\beta}{\mu(B(z,d(z,y)))}\,d\mu(y)\\
&\le c_\mu^4 \sum_{k=0}^{K} \mu(B(z,r/5))^{\frac{1}{p}} (r_{k+1})^\beta\int_{B(z,r_{k+1})\setminus B(z,r_{k})} \frac{f(y)}{\mu(B(z,r_k))}\,d\mu(y)\\
&\le c_\mu^5 \sum_{k=0}^{K} \mu(B(z,r/5))^{\frac{1}{p}} (r_{k+1})^\beta\vint_{B(z,r_{k+1})} f(y)\,d\mu(y)\\
&\le c_\mu^5 \sum_{k=0}^{K} \frac{\mu(B(z,r/5))^{\frac{1}{p}}}{\mu(B(z,r_{k+1}))^{\frac{1}{p}}} (r_{k+1})^\beta\biggl(\int_{B(z,r_{k+1})} f(y)^p\,d\mu(y)\biggr)^{\frac{1}{p}}\,.
\end{align*}
By the reverse doubling condition \eqref{e.reverse_doubling}, for every
$k=0,1,2\ldots,K$,  we get
\begin{align*}
\frac{\mu(B(z,r/5))^{\frac{1}{p}}}{\mu(B(z,r_{k+1}))^{\frac{1}{p}}} (r_{k+1})^\beta
\ =\frac{\mu(B(z,r/5))^{\frac{1}{p}}}{\mu(B(z,r_{k+1}))^{\frac{1}{p}}} (r_{k+1})^{\frac{\sigma}{p}}
(r_{k+1})^{\beta-\frac{\sigma}{p}}\le c_\sigma (r/5)^{\frac{\sigma}{p}}(r_{k+1})^{\beta-\frac{\sigma}{p}}\,,
\end{align*}
 since $r_{k+1}\leq 2 \diam (X)$. Hence, we can continue to estimate as follows
\begin{align*}
 \frac{1}{2}\mu(B(x,r))^\frac{1}{p} &\le  c_\mu^5 \sum_{k=0}^{K} \frac{\mu(B(z,r/5))^{\frac{1}{p}}}{\mu(B(z,r_{k+1}))^{\frac{1}{p}}} (r_{k+1})^\beta\biggl(\int_{B(z,r_{k+1})} f(y)^p\,d\mu(y)\biggr)^{\frac{1}{p}}\\
&\le 
 c_\mu^5  c_\sigma \sum_{k=0}^{K}  (r/5)^{\frac{\sigma}{p}}(r_{k+1})^{\beta-\frac{\sigma}{p}}\biggl(\int_{B(z,r_{k+1})} f(y)^p\,d\mu(y)\biggr)^{\frac{1}{p}}\\
&\le c_\mu^5 c_\sigma  (r/5)^{\beta}  \biggl(\int_{X} f(y)^p\,d\mu(y)\biggr)^{\frac{1}{p}}\ \sum_{k=0}^\infty 2^{(\beta-\frac{\sigma}{p})(k+1)}\,.
\end{align*}
Since $\sigma>\beta p$, the geometric series converges. By combining the above estimates, we get
\begin{equation*}%
\mu(B(x,r))^\frac{1}{p}\le 
C(c_\mu,c_\sigma,\beta,\sigma,p)r^{\beta}  \biggl(\int_{X} f(y)^p\,d\mu(y)\biggl)^{\frac{1}{p}}\,.
\end{equation*}
The estimate
$\|f\|^p_{L^p(X)}\ge Cr^{-\beta p}\mu(B(x,r))$
follows after  simplication. 
\end{proof}

  The second main result of this section is a converse of Theorem~\ref{p.HaCap_geq_RieszCap}.
Observe that assumption \eqref{e.kernel} is the same as the kernel estimate~\eqref{e.kernelLemma}
that appears in Lemma~\ref{l.MaxFunctionIsHajlaszGradient}.  

\begin{theorem}\label{th.HCapLessRCap}
  Assume that the metric space $X$ is complete. Let   $1<p<\infty$ and $0<\beta\le 1$, 
  and assume that $\mu$   satisfies the quantitative reverse doubling condition \eqref{e.reverse_doubling}
for some exponent $\sigma>\beta p$.   In addition,  
assume that there exist $\eta>\beta$ and $c_K>0$ such that
for all $w,y\in X$, $w\not=y$, we have
\begin{equation}\label{e.kernel}
\left\lvert \frac{d(w,z)^\beta}{\mu(B(w,d(w,z)))}-\frac{d(y,z)^\beta}{\mu(B(y,d(y,z)))}   \right\rvert
\le c_K \:  \frac{d(w,y)^\eta }{d(w,z)^{\eta-\beta}\mu(B(w,d(w,z)))}
\end{equation}
for all $z\in X\setminus B(w,2d(w,y))$.
Then there exists a constant $C=C(c_K,c_\mu,c_\sigma,\beta,\eta,\sigma,p)$ such that
\begin{equation}\label{e.goal}
 \cp_{\beta,p} \big( E\cap \overline{B(x,r)},B(x,2r) \big)\le C\, R_{\beta,p}(E\cap \overline{B(x,r)})
\end{equation}
  whenever $E\subset X$ is a closed set, $x\in E$ and $0<r<(1/8)\diam(X)$.   
\end{theorem}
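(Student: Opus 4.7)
My plan is to produce an admissible test function for $\cp_{\beta,p}(E\cap\overline{B(x,r)},B(x,2r))$ from a near-optimal admissible function $f\ge 0$ for $R_{\beta,p}(E\cap\overline{B(x,r)})$; that is, $f$ satisfies $I_\beta f\ge 1$ on $E\cap\overline{B(x,r)}$ with $\lVert f\rVert_p^p$ close to $R_{\beta,p}(E\cap\overline{B(x,r)})$. Truncating $f$ at a large level and afterwards passing to the limit via monotone convergence allows me to assume throughout that $f$ is bounded, so that $Mf\in L^\infty(X)$ and $I_\beta f_1$ (with $f_1$ as below) is everywhere finite and pointwise $\beta$-H\"older via Lemma~\ref{l.MaxFunctionIsHajlaszGradient}.

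The argument divides into two cases according to whether the tail of the Riesz integral of $f$ is already sizable at some reference point. In \emph{Case~A}, there exists $z\in E\cap\overline{B(x,r)}$ with
\[
\int_{X\setminus B(z,r/5)}\frac{f(y)\,d(z,y)^\beta}{\mu(B(z,d(z,y)))}\,d\mu(y)\ge \tfrac{1}{2}.
\]
Lemma~\ref{CommonEstimate}, whose hypothesis $\sigma>\beta p$ matches our reverse-doubling assumption, gives $\lVert f\rVert_p^p\ge C r^{-\beta p}\mu(B(x,r))$, so combining with the upper bound in Lemma~\ref{l.Hequiv} (valid without connectedness) yields
\[
\cp_{\beta,p}(E\cap\overline{B(x,r)},B(x,2r))\le \cp_{\beta,p}(\overline{B(x,r)},B(x,2r))\le C r^{-\beta p}\mu(B(x,r))\le C\lVert f\rVert_p^p.
\]

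In \emph{Case~B}, the tail is strictly below $1/2$ for every $z\in E\cap\overline{B(x,r)}$, so the corresponding local integral exceeds $1/2$ on this set. Setting $f_1=f\chi_{B(x,2r)}$ and noting that $B(z,r/5)\subset B(x,2r)$ for $z\in\overline{B(x,r)}$, the local integrals of $f_1$ and $f$ coincide and hence $I_\beta f_1\ge 1/2$ on $E\cap\overline{B(x,r)}$. I define $v=\min\{2 I_\beta f_1,1\}$ and let $\phi$ be a Lipschitz cutoff with $\phi=1$ on $\overline{B(x,r)}$, $\phi=0$ outside $B(x,2r)$, $0\le\phi\le 1$, and Lipschitz constant $\le 2/r$; a routine Lip-to-H\"older conversion using $\lVert\phi\rVert_\infty\le 1$ makes $\phi$ $\beta$-H\"older with constant $\kappa\le Cr^{-\beta}$. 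Then $u=v\phi$ is admissible: $u\ge 1$ on $E\cap\overline{B(x,r)}$ and $u=0$ outside $B(x,2r)$. For the Haj{\l}asz gradient, Lemma~\ref{l.MaxFunctionIsHajlaszGradient} (which uses precisely the kernel estimate~\eqref{e.kernel}) provides $C_1 Mf_1\in\mathcal{D}^\beta_H(I_\beta f_1)$, and $1$-Lipschitzness of $\min\{\cdot,1\}$ gives $2 C_1 Mf_1\in\mathcal{D}^\beta_H(v)$; then Lemma~\ref{l.Leibniz} yields
\[
g=\bigl(2 C_1 Mf_1+\kappa v\bigr)\chi_{\{\phi\ne 0\}}\in\mathcal{D}^\beta_H(u).
\]
The $L^p$-norm of $g$ splits into two contributions. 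The $Mf_1$-term is bounded by $C\lVert Mf_1\rVert_p^p\le C\lVert f\rVert_p^p$ by $L^p$-boundedness of $M$ (this is where $p>1$ is crucial). For the cutoff term $\kappa v\chi_{\{\phi\ne 0\}}$, the naive $v\le 1$ would introduce an excess $Cr^{-\beta p}\mu(B(x,r))$ that I cannot absorb into $\lVert f\rVert_p^p$ in Case~B; instead I use that $\operatorname{supp}(f_1)\subset \overline{B(z,4r)}$ for every $z\in B(x,2r)$, so Lemma~\ref{l.LocalRiesz_via_MaxFunct} gives $v(z)\le 2 I_\beta f_1(z)\le Cr^\beta Mf(z)$ on $B(x,2r)$ and hence
\[
\kappa^p\int_{B(x,2r)}v^p\,d\mu\le C r^{-\beta p}\cdot r^{\beta p}\lVert Mf\rVert_p^p\le C\lVert f\rVert_p^p.
\]
Summing yields $\lVert g\rVert_p^p\le C\lVert f\rVert_p^p$, hence $\cp_{\beta,p}(E\cap\overline{B(x,r)},B(x,2r))\le C\lVert f\rVert_p^p$ also in Case~B, and infimizing over $f$ proves~\eqref{e.goal}.

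The main obstacle I anticipate is precisely this cutoff contribution in the Leibniz formula: the naive $\lvert v\rvert\le 1$ bound is too weak, and the argument closes only because $v$ is itself controlled by $r^\beta Mf$ on $B(x,2r)$ through the local Riesz/maximal estimate of Lemma~\ref{l.LocalRiesz_via_MaxFunct}, which exactly cancels the factor $\kappa\approx r^{-\beta}$. A smaller technical point is securing $u\in\Lip_\beta(X)$ in the pointwise sense; this hinges on boundedness of $Mf_1$, arranged by the initial truncation of $f$, with the general case recovered by a standard monotone approximation of the Riesz capacity.
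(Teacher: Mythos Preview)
Your argument follows the paper's almost step for step: the same dichotomy on the tail of the Riesz integral, the same appeal to Lemma~\ref{CommonEstimate} and Lemma~\ref{l.Hequiv} in Case~A, and in Case~B the same cutoff construction together with the key observation that the Leibniz term $\kappa v$ is controlled on $B(x,2r)$ by $Cr^{\beta}Mf$ through Lemma~\ref{l.LocalRiesz_via_MaxFunct}, so that the factor $\kappa\approx r^{-\beta}$ cancels. The cosmetic differences---truncating $v$ at $1$ rather than at $\sup_{B(x,2r)}I_\beta h$, and converting a Lipschitz cutoff to a $\beta$-H\"older one instead of writing the H\"older cutoff directly---are harmless.

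The one genuine gap is your reduction to bounded $f$, which you call a ``standard monotone approximation of the Riesz capacity''. This step is not routine, and it is exactly where the completeness hypothesis on $X$ enters the proof. Truncations $f_k=\min\{f,k\}$ are not admissible for $R_{\beta,p}(F)$: although $I_\beta f_k(z)\nearrow I_\beta f(z)\ge 1$ pointwise on $F$, nothing guarantees a \emph{uniform} lower bound over $F$ for any single $k$, so you cannot simply ``pass to the limit'' in the capacity. The paper handles this only inside Case~B, by a compactness argument: since $X$ is complete and $\mu$ is doubling, the closed bounded set $F$ is compact; assuming for contradiction that for every $k$ there is $z_k\in F$ with
\[
\int_{B(z_k,r/5)}\frac{f_k(y)\,d(z_k,y)^\beta}{\mu(B(z_k,d(z_k,y)))}\,d\mu(y)<\frac{1}{3c_\mu},
\]
one extracts a subsequence $z_{k_m}\to z_0\in F$ and applies Fatou's lemma (absorbing a doubling factor $c_\mu$ from the shift of centers) to contradict the Case~B lower bound $\tfrac12$ at $z_0$. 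This yields a fixed $k$ for which $3c_\mu f_k\chi_{B(x,2r)}$ is bounded and can play the role of your $2f_1$. Once you insert this compactness step, your proof is complete and essentially identical to the paper's.
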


\begin{proof}
We may assume that the Riesz $(\beta,p)$-capacity   on  
the right-hand side of \eqref{e.goal} is finite.
Fix a nonnegative function $f\in L^p(X)$ such that $I_\beta f\ge 1$ on $F=E\cap \overline{B(x,r)}$. 
It suffices to   show   %
that
\begin{equation}\label{e.r_desired}
\cp_{\beta,p}(F,B(x,2r))\le
C(c_K,c_\mu,c_\sigma,\beta,\eta,\sigma,p)\int_{X} f(y)^p\,d\mu(y)
\end{equation}
  since then the claim~\eqref{e.goal} follows 
by taking infimum over all $f$ as above. %
In order to prove inequality \eqref{e.r_desired}, we consider several cases.

First we assume that 
\begin{equation}\label{e.riesz-first}
\int_{X\setminus B(z,r/5)} \frac{f(y)d(z,y)^\beta}{\mu(B(z,d(z,y)))}\,d\mu(y)\geq \frac{1}{2}\,
\end{equation}
for some $z\in F \subset \overline{B(x,r)} $ . Then,   by Lemma~\ref{CommonEstimate}, we have
\[
r^{-\beta p}\mu(B(x,r))\le 
C(c_\mu,c_\sigma,\beta,\sigma,p)\int_{X} f(y)^p\,d\mu(y).
\]
By monotonicity and Lemma \ref{l.Hequiv}, we get
\begin{equation}\label{e.first_case_below}
\cp_{\beta,p}(F,B(x,2r))\le \cp_{\beta,p}(\overline{B(x,r)},B(x,2r))\le C(c_\mu,c_\sigma,\beta,\sigma,p)\int_{X} f(y)^p\,d\mu(y)\,,
\end{equation}
  and so \eqref{e.r_desired} holds in this case.

Next we  consider the case when  \eqref{e.riesz-first}
does not hold for any   $z\in F$. Since  $I_\beta f\ge 1$ on $F$, we have 
\begin{equation}\label{e.contra}
\int_{B(z,r/5)} \frac{f(y)d(z,y)^\beta}{\mu(B(z,d(z,y)))}\,d\mu(y)
= I_\beta f(z)-\int_{X\setminus B(z,r/5)} \frac{f(y)d(z,y)^\beta}{\mu(B(z,d(z,y)))}\,d\mu(y) 
\geq \frac{1}{2}\,,
\end{equation}
for every $z\in F$.

Assume first that $f$ is bounded in $X$. Let
\[
\psi(z)=\max \bigl\{0,1-r^{-\beta}d(z,B(x,r))^\beta\bigr\}\,
\]
for every $z\in X$. 
Then $0\le\psi\le 1$, $\psi=0$ in $X\setminus B(x,2r)$, $\psi=1$ in $\overline{B(x,r)}$,
and  $\psi$ is a $\beta$-H\"older function in $X$ with a constant $r^{-\beta}$.
Define $h=2f\ch{B(x,2r)}$  and
\begin{equation}\label{TestFunctionRieszCapacity}
v(z)=I_\beta h(z)\psi(z)\,,\qquad z\in X\,.
\end{equation}
By inequality  \eqref{e.contra}, for every $z\in F=E\cap \overline{B(x,r)}$   we have  
\[
1\leq \int_{X} \frac{2f(y)\ch{B(x,2r)}(y)d(z,y)^\beta}{\mu(B(z,d(z,y)))}\,d\mu(y)
= I_\beta\bigl(2f\ch{B(x,2r)}\bigr)(z)=I_\beta h(z).
\]
  Thus  
$v\ge 1$ in $F=E\cap \overline{B(x,r)}$  and $v=0$ in $X\setminus B(x,2r)$. 
By using Lemma \ref{l.LocalRiesz_via_MaxFunct} and properties of $h$, we see that $I_\beta h$ is finite everywhere in $X$. 
Since $Mh$ is bounded in $X$, Lemma \ref{l.MaxFunctionIsHajlaszGradient} implies
that $I_\beta h\in \Lip_\beta(X)$ 
and   $C_1Mh\in\mathcal{D}_H^\beta(I_\beta h)$,
where   $C_1=C(c_\mu,\beta,\eta,c_K)>0$. 

Let $M = \sup_{z \in B(x,2r)}I_\beta h(z)$ and $u =  \min\{I_\beta h,M\}$. Then  $u$ is a bounded $\beta$-H\"older function that coincides with $I_\beta h$ on $B(x,2r)$; thus $v = u\psi$ in $X$. 
We also have that $C_1Mh\in\mathcal{D}_H^\beta(u)$. Lemma~\ref{l.Leibniz} implies that  the function $v$ is $\beta$-H\"older in $X$ and   
\[
g_v= \left(C_1Mh\lVert \psi\rVert_\infty + r^{-\beta} I_\beta h\right)\ch{\{\psi\not=0\}} =\left(C_1Mh\lVert \psi\rVert_\infty + r^{-\beta}  u \right)\ch{\{\psi\not=0\}} \in\mathcal{D}_H^{\beta}(v)\,.
\]
  Thus   %
the pair $v$ and $g_v$ is admissible for testing the Hajlasz $(\beta,p)$-capacity  of $F$ relative to $B(x,2r)$  and
\begin{align*}
\cp_{\beta,p}(F,B(x,2r))
&\le\int_X  g_v(z)^p\,d\mu(z)\\&\le 
2^p  C_1^p\int_{B(x,2r)} (Mh(z))^p\,d\mu(z) + 2^pr^{-\beta p}\int_{B(x,2r)} (I_\beta h(z))^p\,d\mu(z).\end{align*}
Since $h\le 2f$, we have $Mh\le 2 Mf$.  By Lemma \ref{l.LocalRiesz_via_MaxFunct}, we also have 
\begin{align*}
I_\beta h(z)
&= 2\int_{B(x,2r)} \frac{f(y)d(z,y)^\beta}{\mu(B(z,d(z,y)))}\,d\mu(y)
\\ &\le 2\int_{B(z,4r)} \frac{f(y)d(z,y)^\beta}{\mu(B(z,d(z,y)))}\,d\mu(y)
\le C(c_\mu,\beta)r^\beta Mf(z)
\end{align*}
for every $z\in B(x,2r)$.
Hence, 
we see that
\begin{align*}
\cp_{\beta,p}(F,B(x,2r))
\le C(C_1,p,c_\mu,\beta)\int_X (Mf(z))^p\,d\mu(z)\le C(C_1,p,c_\mu,\beta)\int_X f(z)^p\,d\mu(z)\,.
\end{align*}
 The last step follows
from the Hardy--Littlewood   maximal   theorem since $p>1$, see 
\cite[Theorem~3.13]{MR2867756}. We have shown
that inequality \eqref{e.r_desired} holds.

In case $f$ is not bounded  in $X$ and \eqref{e.contra} holds for all $z\in F$,  we write $f_k(y)=\min\{k,f(y)\}$ for every $y\in X$ and $k\in\N$. Observe that $f_k\to f$ almost everywhere in $X$, as $k\to \infty$.
Hence, by Fatou's lemma
\begin{align*}
\frac{1}{2}&\leq \int_{B(z,r/5)} \frac{f(y)d(z,y)^\beta}{\mu(B(z,d(z,y)))}\,d\mu(y)
\le \liminf_{k\to\infty} \int_{B(z,r/5)} \frac{f_k(y)d(z,y)^\beta}{\mu(B(z,d(z,y)))}\,d\mu(y)
\end{align*}
for every $z\in F$. We claim that there exists $k\in \N$ such that
\begin{equation}\label{e.option}
\frac{1}{3c_\mu}\le \int_{B(z,r/5)} \frac{f_k(y)d(z,y)^\beta}{\mu(B(z,d(z,y)))}\,d\mu(y)
\end{equation}
for all $z\in F$.
Assume that such a  number  $k$ does not exist. Then for every $k\in \N$ there exists
$z_k\in F$ such that 
\[
\frac{1}{3c_\mu}> \int_{B(z_k,r/5)} \frac{f_k(y)d(z_k,y)^\beta}{\mu(B(z_k,d(z_k,y)))}\,d\mu(y)\,.
\]
Since $F\subset X$ is a closed and bounded set and $X$ is a complete doubling space, 
by \cite[Proposition~3.1]{MR2867756}
we find that $F$ is compact. In particular, there exists
a subsequence $(z_{k_m})_{m\in\N}$ of $(z_k)_{k\in \N}$ such that
$\lim_{m\to\infty} z_{k_m}= z_0\in F$. By Fatou's lemma
\begin{align*}
\frac{1}{3c_\mu}\ge \liminf_{m\to\infty}\int_{B(z_{k_m},r/5)} \frac{f_{k_m}(y)d(z_{k_m},y)^\beta}{\mu(B(z_{k_m},d(z_{k_m},y)))}\,d\mu(y)\ge 
\frac{1}{c_\mu} \int_{B(z_{0},r/5)} \frac{f(y)d(z_{0},y)^\beta}{\mu(B(z_{0},d(z_{0},y)))}\,d\mu(y)\,.
\end{align*}
This is a contradiction with \eqref{e.contra},  since $z_0\in F$. 
We have shown that there exists $k\in\N$ such that \eqref{e.option} holds for all $z\in F$.
Using a test function $v$ as in \eqref{TestFunctionRieszCapacity} but with
$h=3c_\mu f_k\ch{B(x,2r)}$, we repeat the reasoning above and finish the proof.
\end{proof}

Combining   Theorems~\ref{p.HaCap_geq_RieszCap} and~\ref{th.HCapLessRCap},   we obtain the following result 
on   the   comparability of the Riesz   and   Haj{\l}asz capacities.

\begin{corollary}\label{c.HCapEqvRCap}
  Assume that the metric space $X$ is complete and connected. Let $1<p<\infty$ and $0<\beta\le 1$,
and assume that $\mu$  
satisfies the quantitative reverse doubling condition \eqref{e.reverse_doubling}
for some exponent $\sigma>\beta p$ and that the kernel
estimate \eqref{e.kernel} holds  for some $\eta>\beta$ and $c_K>0$. 
Then there is a constant 
 $C=C(c_K,c_\mu,c_\sigma,\beta,\eta,\sigma,p)$  such that
\begin{equation}\label{e.comparabilityRieszHajlasz}
 C^{-1} \: R_{\beta,p}(E\cap \overline{B(x,r)}) \leq \cp_{\beta,p} \big( E\cap \overline{B(x,r)},B(x,2r) \big)\le C\: R_{\beta,p}(E\cap \overline{B(x,r)})\,,
\end{equation}
  whenever $E\subset X$ is a closed set,   $x\in E$ and $0<r<(1/8)\diam(X)$. 
\end{corollary}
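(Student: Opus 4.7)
The plan is essentially bookkeeping: the corollary is a direct consequence of the two main theorems of the section, so I simply need to verify that the hypotheses of both theorems are satisfied and then assemble the chain of inequalities.

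For the lower bound in \eqref{e.comparabilityRieszHajlasz}, I would invoke Theorem~\ref{p.HaCap_geq_RieszCap}. That theorem requires only that $X$ be connected and that $1\le p<\infty$ with $0<\beta\le 1$. All of these hypotheses are included in the assumptions of the corollary (connectedness is given, and $p>1$ is stronger than $p\ge 1$). Therefore we obtain directly
\[
\cp_{\beta,p}\big(E\cap\overline{B(x,r)},B(x,2r)\big)\ge C(c_\mu,p)\,R_{\beta,p}\big(E\cap\overline{B(x,r)}\big)\,.
\]

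For the upper bound, I would apply Theorem~\ref{th.HCapLessRCap}. That result requires $X$ to be complete, $1<p<\infty$, $0<\beta\le 1$, the quantitative reverse doubling condition \eqref{e.reverse_doubling} with exponent $\sigma>\beta p$, and the kernel estimate \eqref{e.kernel} for some $\eta>\beta$ and $c_K>0$. These are exactly the hypotheses assumed in the present corollary. Hence
\[
\cp_{\beta,p}\big(E\cap\overline{B(x,r)},B(x,2r)\big)\le C(c_K,c_\mu,c_\sigma,\beta,\eta,\sigma,p)\,R_{\beta,p}\big(E\cap\overline{B(x,r)}\big)\,.
\]

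Combining the two displays and taking $C$ to be the maximum of the two constants above yields \eqref{e.comparabilityRieszHajlasz}. There is no real obstacle here: all the analytic content has been absorbed into Theorems~\ref{p.HaCap_geq_RieszCap} and~\ref{th.HCapLessRCap}, and the only thing to check is that the running hypothesis set of the corollary (complete and connected $X$, reverse doubling with $\sigma>\beta p$, kernel estimate with $\eta>\beta$) simultaneously contains what each of the two theorems needs, which is immediate.
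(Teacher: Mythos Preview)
Your proposal is correct and matches the paper's approach exactly: the corollary is stated immediately after Theorems~\ref{p.HaCap_geq_RieszCap} and~\ref{th.HCapLessRCap} with the remark that it follows by combining them, and no separate proof is given. Your verification that the hypotheses of both theorems are contained in those of the corollary is precisely the intended (and only) content.
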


\begin{remark}\label{r.Qreg}
In an Ahlfors $Q$-regular space $X$ it is natural to consider capacities defined in terms of Riesz potentials $\mathcal{I}_\beta f$, see Remark~\ref{RemarkRieszRegularSpace}. For these capacities, one can obtain the results analogous to Theorem~\ref{p.HaCap_geq_RieszCap} and Theorem~\ref{th.HCapLessRCap}. The proofs  are  similar and use the  accordingly  modified statements of Lemma \ref{l.LocalRiesz_via_MaxFunct}, Lemma \ref{l.MaxFunctionIsHajlaszGradient} and Lemma~\ref{CommonEstimate}. 
In this setting  with $0<\beta<1$,   the kernel estimate \eqref{e.kernel} is not required anymore, see Remark~\ref{RemarkRieszRegularSpace}, 
  and the quantitative reverse doubling condition \eqref{e.reverse_doubling} holds with $\sigma=Q$.
Hence, it follows that the corresponding  Riesz $(\beta,p)$-capacity   $\mathcal{I}_\beta f$ 
(and by~\eqref{eq.rieszes} also $I_\beta f$)   
is comparable to the   Haj{\l}asz $(\beta,p)$-capacity in any 
  complete and connected   Ahlfors $Q$-regular space with  $0<\beta<1$   and   $Q>\beta p$.
  This proves Theorem~\ref{thm.RH_intro}.  
\end{remark}

\section{Hausdorff content density condition}\label{s.aux_restimate}

In this section, we introduce a version of the Hausdorff content density condition, 
slightly different from the one that was used in~\cite{MR3631460}.
This is an auxiliary condition that 
  will be used in Section~\ref{s.lequiv_density} to connect 
Riesz and Haj{\l}asz   capacity density conditions.  

\begin{definition}
 The ($\rho$-restricted) Hausdorff content of codimension $q\ge 0$ 
of a set $F\subset X$ is defined by  
\[
\Ha^{\mu,q}_\rho(F)=\inf\Biggl\{\sum_{k} \mu(B(x_k,r_k))\,r_k^{-q} :
F\subset\bigcup_{k} B(x_k,r_k)\text{ and } 0<r_k\leq \rho  \Biggr\}.
\]
A closed set $E\subset X$ satisfies the Hausdorff content density condition of codimension $q\ge 0$ if  there is a constant $c_0>0$ such that
\begin{equation}\label{e.hausdorff_content_density}
\Ha^{\mu,q}_r(E\cap \overline{B(x,r)})\ge c_0 \: \Ha^{\mu,q}_r(\overline{B(x,r)})
\end{equation}
for   all   $x\in E$ and all $0<r<\diam(E)$.
\end{definition}

\begin{remark}\label{r.Hlasku}
Let $q\ge 0$.
Then it is easy to show that
$r^{-q}\mu(B(x,r))=\Ha^{\mu,q}_r(B(x,r))$ and that
\[
r^{-q}\mu(B(x,r))\le \Ha^{\mu,q}_r(\overline{B(x,r)}) \le C(c_\mu)\, r^{-q}\mu(B(x,r))
\] for   all   $x\in X$ and all $r>0$.
 This can be seen as an analogue of Lemma \ref{l.Hequiv}
for the Hausdorff content. 
In particular, it follows that 
a closed set $E\subset X$ satisfies the Hausdorff content density condition of codimension $q\ge 0$ if, and only if
 there is a constant $c_1>0$ such that 
\[
\Ha^{\mu,q}_r(E\cap \overline{B(x,r)})\ge c_1 r^{-q}\mu(B(x,r))  
\]
for every $x\in E$ and all  $0<r<\diam(E)$.
 This equivalent condition   was taken as the   definition
of the Hausdorff content density condition
of codimension $q$ in \cite{CV2021}. 

\end{remark}

By definition, we have  $\Ha^{\mu,q}_{2\diam(X)}(F)\le \Ha^{\mu,q}_r(F)$
if $0<r\le 2\diam(X)$ and $F\subset X$. Theorem~\ref{t.improve_H} gives  a  condition  under   which this inequality can be reversed.

\begin{theorem}\label{t.improve_H}
  Let %
$q>0$.  
The following conditions are 
equivalent:
\begin{enumerate}
\item[\textup{(i)}] There is a constant $c_q>0$ such that the quantitative
reverse doubling condition
\begin{equation}\label{e.reverse_doubling_q}
 \frac{\mu(B(x,r))}{\mu(B(x,R))} \le c_q\Bigl(\frac{r}{R}\Bigr)^q
\end{equation}
   holds  
for   all $x\in X$ and %
all   $0<r<R\le 2\diam(X)$.
\item[\textup{(ii)}] There is a constant $C_2>0$ such that 
\[\Ha^{\mu,q}_r(F)\le C_2\Ha^{\mu,q}_{2\diam(X)}(F)\]
  whenever   
$F\subset \overline{B(x,r)}$, with $x\in X$ and $r>0$.
\item[\textup{(iii)}] There is a constant $C_3>0$ such that 
\[
r^{-q}\mu(B(x,r))  \le C_3\Ha^{\mu,q}_{2\diam(X)}(B(x,r))\]
for   all $x\in X$ and all   $r>0$.
\end{enumerate}
\end{theorem}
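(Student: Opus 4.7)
The plan is to establish the equivalence via the cyclic chain $(\textup{iii})\Rightarrow(\textup{i})\Rightarrow(\textup{ii})\Rightarrow(\textup{iii})$. Throughout I will lean on Remark~\ref{r.Hlasku}, which supplies both the equality $\Ha^{\mu,q}_r(B(x,r))=r^{-q}\mu(B(x,r))$ and the upper bound $\Ha^{\mu,q}_r(\overline{B(x,r)})\le C(c_\mu)\,r^{-q}\mu(B(x,r))$.

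The two short directions I would dispose of first. For $(\textup{iii})\Rightarrow(\textup{i})$, given $x\in X$ and $0<r<R\le 2\diam(X)$, the single ball $B(x,R)$ is an admissible cover of $B(x,r)$ in the definition of $\Ha^{\mu,q}_{2\diam(X)}(B(x,r))$, so that quantity is bounded by $R^{-q}\mu(B(x,R))$; inserting this into (iii) yields (i) with $c_q=C_3$. For $(\textup{ii})\Rightarrow(\textup{iii})$, I would apply (ii) with $F=B(x,r)\subset\overline{B(x,r)}$ and rewrite the left-hand side using the equality in Remark~\ref{r.Hlasku}.

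The substantive step is $(\textup{i})\Rightarrow(\textup{ii})$, which I would handle by a cover-modification argument. Fix $F\subset\overline{B(x,r)}$ and let $\{B(x_k,r_k)\}$ be any admissible cover for $\Ha^{\mu,q}_{2\diam(X)}(F)$; after discarding balls that miss $F$, I would split the indices into \emph{small} balls with $r_k\le r$ and \emph{large} balls with $r_k>r$. The small balls already satisfy the radius restriction for $\Ha^{\mu,q}_r(F)$. The entire large family I would replace at once by a single $\Ha^{\mu,q}_r$-admissible cover of $\overline{B(x,r)}\supset F$, which by Remark~\ref{r.Hlasku} exists with total cost at most $C(c_\mu)\,r^{-q}\mu(B(x,r))$. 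To charge this replacement cost to any one large-ball term, fix a large index $k$, pick a witness $y_k\in F\cap B(x_k,r_k)\subset\overline{B(x,r)}$, and observe the inclusions $B(x,r)\subset B(y_k,2r)$ and $B(y_k,r_k)\subset B(x_k,2r_k)$. Combining doubling with the reverse doubling (i) applied at $y_k$ (valid since $r_k\le 2\diam(X)$) in the range $r_k>2r$ then yields
\[
r^{-q}\mu(B(x,r))\le C(c_\mu,c_q,q)\,r_k^{-q}\mu(B(x_k,r_k))\,,
\]
and the borderline case $r<r_k\le 2r$ gives the same bound directly from doubling since then $r\asymp r_k$. Adding this to the small-ball sum bounds the cost of the new $\Ha^{\mu,q}_r$-admissible cover by a constant multiple of the original sum, and passing to the infimum yields (ii).

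The main obstacle is precisely this replacement step. What makes the argument work is the \emph{quantitative} gain $(r/r_k)^q$ supplied by the reverse doubling with matching exponent $q$: it is exactly what is needed to absorb the cost $r^{-q}\mu(B(x,r))$ of covering $\overline{B(x,r)}$ by small balls into the contribution $r_k^{-q}\mu(B(x_k,r_k))$ of any single large ball. Doubling alone would leave an unabsorbable factor $(r_k/r)^q$ as $r_k/r\to\infty$, so the implication $(\textup{i})\Rightarrow(\textup{ii})$ genuinely requires reverse doubling with exponent $q$ and not merely some smaller exponent.
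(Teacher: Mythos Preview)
Your proposal is correct and follows essentially the same cyclic scheme as the paper. The only cosmetic difference in the main step $(\textup{i})\Rightarrow(\textup{ii})$ is that you invoke Remark~\ref{r.Hlasku} to cover $\overline{B(x,r)}$ at cost $C(c_\mu)\,r^{-q}\mu(B(x,r))$, whereas the paper constructs this cover explicitly via the $5r$-covering lemma (obtaining $N\le C(c_\mu)$ balls of radius $r$ centered in $F$); both then charge the replacement cost to a single large ball using reverse doubling and doubling exactly as you describe.
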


\begin{proof}
We assume that condition (i) holds and we prove condition (ii). For this purpose, we fix $F\subset \overline{B(x,r)}$ with $x\in X$ and $r>0$.
Let $\{B(x_k,r_k)\}_{k\in I}$ be a finite or countable cover of $F$ such that
$B(x_k,r_k)\cap F\not=\emptyset$ and $0< r_k\le 2\diam(X)$ for all $k\in I$.
If $r_k\le r$ for all $k\in I$, then
\[
\Ha^{\mu,q}_r(F)\le \sum_{k\in I} \mu(B(x_k,r_k))r_k^{-q}\,.
\]
Next we assume that 
at least one of the radii $r_k$, $k\in I$, is not bounded by $r$. Without loss of generality, we may assume that
$r_1> r$.
By the $5r$-covering lemma \cite[Lemma 1.7]{MR2867756}, we obtain $N$ 
  pairwise   disjoint balls $B(y_i,r/5)$ such that
$F\subset \bigcup_{i=1}^N B(y_i,r)$ and $y_i\in F$ for every $i=1,\ldots,N$.
The doubling condition implies that $N\le C(c_\mu)$.
By  the  assumed reverse doubling condition, followed by the doubling condition,  we get
\begin{align*}
\Ha^{\mu,q}_r(F)&\le\sum_{i=1}^N \mu(B(y_i,r))r^{-q}
\le c_q \sum_{i=1}^N \mu(B(y_i,r_1))r_1^{-q}
\\& \le Nc_\mu^2 c_q 
\mu(B(x_1,r_1)) r_1^{-q}\le C(c_q,c_\mu)\sum_{k\in I} \mu(B(x_k,r_k))r_k^{-q}\,.
\end{align*}
In any case, we see that
\[
\Ha^{\mu,q}_r(F)\le C(c_q,c_\mu)\sum_{k\in I} \mu(B(x_k,r_k))r_k^{-q}\,,
\]
and condition (ii) with a constant $C_2=C(c_q,c_\mu)>0$ follows by taking infimum over all covers
$\{B(x_k,r_k)\}_{k\in I}$ of $F$ as above.

 Condition  (ii) implies condition (iii) by choosing $F=B(x,r)$  and using Remark \ref{r.Hlasku}. 
  Finally,   assume that
condition (iii) holds. Let $x\in X$ and $0<r<R\le 2\diam(X)$. Then 
Remark \ref{r.Hlasku} implies that \begin{align*}
r^{-q}\mu(B(x,r))&\le C_3\Ha^{\mu,q}_{2\diam(X)}(B(x,r))  \le C_3\Ha^{\mu,q}_{2\diam(X)}(B(x,R))
\\&\le C_3\Ha^{\mu,q}_R(B(x,R))=C_3R^{-q}\mu(B(x,R))\,.
\end{align*}
Thus, condition (i) holds with a constant $c_q=C_3$.
\end{proof}

Assuming \eqref{e.reverse_doubling_q}  for some $q>0$,   then a closed set $E\subset X$ satisfies the Hausdorff content density condition of codimension $q\ge 0$ if and only if
\[
\Ha^{\mu,q}_{2\diam(X)}(E\cap \overline{B(x,r)})\ge C \: \Ha^{\mu,q}_{2\diam(X)}(\overline{B(x,r)})
\]
for   all   $x\in E$ and all  $0<r<\diam(E)$.
This is a direct consequence of Theorem \ref{t.improve_H}.

The next lemma is an extension of \cite[Proposition 3.12]{MR3277052}
to metric spaces, with a new proof;  see
also \cite[Corollary 5.1.14]{MR1411441}.
 
  It 
gives a Riesz capacity estimate for sets satisfying the Hausdorff content density condition.
This result and also its consequence in Lemma~\ref{c.balls}
will be applied in the proof of Theorem~\ref{t.Riesz_and_Hajlasz},  
which relates the density conditions corresponding to the Riesz capacity, 
the Haj{\l}asz capacity and the Hausdorff content.

\begin{lemma}\label{l.hajlasz implies capacity}
  Let $1<p<\infty$ and $0<\beta\le 1$,
and assume that $\mu$  
satisfies the quantitative reverse doubling condition \eqref{e.reverse_doubling}
for some exponent $\sigma>\beta p$. 
  Moreover, let   $E\subset X$ be a closed set
that
satisfies the Hausdorff content density condition \eqref{e.hausdorff_content_density} for some $0 < q < \beta p$.
Then there is a constant  $C=C(c_1,c_\mu,c_\sigma,\sigma,\beta,q,p)>0$  such that
\[
R_{\beta,p}(E\cap \overline{B(x,r)})\ge Cr^{-\beta p}\mu(B(x,r)) 
\]
for all $x\in E$ and all $0<r<(1/8)\diam(E)$. 
Here the constant $c_1$ is defined in \eqref{e.hdensity}.  
\end{lemma}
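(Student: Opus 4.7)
The plan is to unpack the definition of $R_{\beta,p}$ and show that every admissible test function $f\ge 0$ with $I_\beta f\ge 1$ on $F=E\cap\overline{B(x,r)}$ already satisfies $\|f\|_p^p\ge Cr^{-\beta p}\mu(B(x,r))$; the statement then follows by taking the infimum. I would split the argument according to where the mass of $I_\beta f(z)$ concentrates for $z\in F$. If for some $z_0\in F$ one has $\int_{X\setminus B(z_0,r/5)}\frac{f(y)d(z_0,y)^\beta}{\mu(B(z_0,d(z_0,y)))}\,d\mu(y)\ge 1/2$, then Lemma~\ref{CommonEstimate} applies at $z_0$ and yields the desired bound immediately. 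Otherwise, for every $z\in F$ the local integral $\int_{B(z,r/5)}\frac{f(y)d(z,y)^\beta}{\mu(B(z,d(z,y)))}\,d\mu(y)$ is at least $1/2$, and this is the case where the Hausdorff content density hypothesis must be used.

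In this second case I would employ a dyadic pigeonhole followed by a covering argument. Splitting the local integral along the annuli $B(z,r_k)\setminus B(z,r_{k+1})$ with $r_k=2^{-k}r/5$ and using doubling (as in the proof of Lemma~\ref{l.LocalRiesz_via_MaxFunct}) reduces the $\ge 1/2$ bound to
\[
\sum_{k=0}^{\infty} r_k^\beta \vint_{B(z,r_k)} f\,d\mu \ge \frac{1}{2c_\mu}\,.
\]
Set $\epsilon=\beta-q/p$, positive by the hypothesis $q<\beta p$. Comparing the above series against the convergent geometric series $\sum 2^{-\epsilon k}$ produces, for every $z\in F$, an index $k(z)\ge 0$ with
\[
r_{k(z)}^\beta \vint_{B(z,r_{k(z)})} f\,d\mu \ge c(\epsilon,c_\mu)\,2^{-\epsilon k(z)}\,.
\]
Applying H\"older's inequality $\vint_B f\le\mu(B)^{-1/p}\|f\|_{L^p(B)}$ and using the identities $r_{k(z)}=2^{-k(z)}r/5$ together with $\epsilon p=\beta p-q$, this rearranges to
\[
\|f\|_{L^p(B(z,r_{k(z)}))}^p \ge c' r^{q-\beta p} r_{k(z)}^{-q}\mu(B(z,r_{k(z)}))\,.
\]

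To finish, I would apply the $5r$-covering lemma to the family $\{B(z,r_{k(z)}):z\in F\}$ (whose radii are bounded by $r/5$) to extract pairwise disjoint balls $B_i=B(z_i,r_{k(z_i)})$ with $F\subset\bigcup_i 5B_i$. Disjointness plus the previous estimate yields $\|f\|_p^p\ge c' r^{q-\beta p}\sum_i r_{k(z_i)}^{-q}\mu(B_i)$; passing from $B_i$ to $5B_i$ by doubling and noting that $5r_{k(z_i)}\le r$ makes $\{5B_i\}$ admissible for $\Ha^{\mu,q}_r(F)$, so the sum dominates $\Ha^{\mu,q}_r(F)$. The Hausdorff content density condition in the form given by Remark~\ref{r.Hlasku} then provides $\Ha^{\mu,q}_r(F)\ge c_1 r^{-q}\mu(B(x,r))$, and combining these inequalities produces the desired lower bound $\|f\|_p^p\ge C r^{-\beta p}\mu(B(x,r))$.

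The main obstacle is the pigeonhole step in the second case: the strict inequality $q<\beta p$ (hence $\epsilon>0$) is essential both for producing a \emph{single} scale $r_{k(z)}$ on which the local mean of $f$ is controllably large, and, through the identity $\epsilon p=\beta p-q$, for converting $r_{k(z)}^{-\beta p}$ into exactly the codimension-$q$ weighting $r^{q-\beta p}r_{k(z)}^{-q}$ that matches the $\Ha^{\mu,q}$-admissibility of the covering. A minor technical point, handled by the $5r$-covering lemma and the doubling condition, is ensuring that the covering radii remain below the allowed threshold $r$ so that the extracted balls $5B_i$ do participate in the definition of $\Ha^{\mu,q}_r(F)$.
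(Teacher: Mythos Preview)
Your proposal is correct and follows essentially the same approach as the paper's proof: the same dichotomy (tail mass at some $z_0$ versus local mass at every $z$), the same appeal to Lemma~\ref{CommonEstimate} in the first case, and in the second case the same dyadic pigeonhole with exponent $\beta-q/p$ followed by H\"older, the $5r$-covering lemma, and the Hausdorff content density bound from Remark~\ref{r.Hlasku}. The only cosmetic difference is that the paper applies H\"older before the pigeonhole (comparing the two series $\sum r_i^{\beta-q/p}$ and $\sum r_i^\beta\mu(B(z,r_i))^{-1/p}\|f\|_{L^p(B(z,r_i))}$ directly), whereas you pigeonhole first and apply H\"older afterward; the arithmetic is identical.
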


\begin{proof}
 By Remark \ref{r.Hlasku},  there is a constant  $c_1>0$  such that
\begin{equation}\label{e.hdensity}
\Ha^{\mu, q}_{r}(E\cap \overline{B(x,r)})\geq c_1 r^{-q}\mu(B(x,r))\,,
\end{equation}
for   all   $x\in E$ and   all   $0<r<\diam (E)$.

Fix $x\in E$ and $0<r<(1/8)\diam(E)$, and write $F=E\cap \overline{B(x,r)}$.  
 We may assume that $R_{\beta,p}(F)<\infty$. 
  Let $f\in L^p(X)$ be   a nonnegative  test function for the capacity $R_{\beta,p}(F)$. Then,  in particular, 
\begin{equation}\label{e.X_is_large}
1\le I_\beta f(z)=\int_{X}\frac{f(y)d(z,y)^\beta}{\mu(B(z,d(z,y)))}\,d\mu(y)
\end{equation}
for every $z\in  F$.
We consider two cases. 

First,   assume   that
\[
\frac{1}{2}\leq \int_{B(z,r/5)} \frac{f(y)d(z,y)^\beta}{\mu(B(z,d(z,y)))}\,d\mu(y)
\]
for every $z\in F$. In this case, 
 we proceed as in the proof
of \cite[Theorem 4.3]{Nuutinen2015TheRC}. 
Namely, we fix $z\in F$ and 
define $r_i=2^{-i}r/5$, $i=0,1,2,\dots$.  Estimating as in Lemma \ref{l.LocalRiesz_via_MaxFunct}, we obtain
\begin{align*}
1&\leq 2 \int_{B(z,r/5)} \frac{f(y)d(z,y)^\beta}{\mu(B(z,d(z,y)))}\,d\mu(y)\le 2c_\mu \sum_{i=0}^{\infty} r_i^\beta \vint_{B(z,r_i)} f(y) \, d\mu (y)\\
&\le  2c_\mu \sum_{i=0}^{\infty} \frac{r_i^\beta}{\mu(B(z,r_{i}))^{1/p}}\left(\int_{B(z,r_i)} f(y)^p \, d\mu (y)\right)^{1/p}.
\end{align*}
For every   $\delta>0$, there is a constant $C(\delta)>0$ such that 
\[1=C(\delta)\sum_{i=0}^{\infty}2^{-i \delta}=C(\delta)5^{\delta}\sum_{i=0}^{\infty}\left(r_i/r\right)^\delta=C(\delta)(r/5)^{-\delta}\sum_{i=0}^{\infty}r_i^\delta.\]
Choose $\delta=\beta-q/p>0$, and
combine two previous expressions to obtain the inequality
\begin{equation}\label{e.sums_comp}
\sum_{i=0}^{\infty}r_i^{\beta-q/p}\le  C(c_\mu,\beta,q,p)  r^{\beta-q/p} \sum_{i=0}^{\infty} \frac{r_i^\beta}{\mu(B(z,r_{i}))^{1/p}}\left(\int_{B(z,r_i)} f(y)^p \, d\mu (y)\right)^{1/p}\,,
\end{equation}
which is valid for every $z\in F$.

It follows  from \eqref{e.sums_comp}  
that, for each $z\in F$, there is a ball $B(z, r_{i_z})$ such that $r_{i_z}\le r/5$ and
\[
\mu(B(z,r_{i_z}))r_{i_z}^{-q}\leq C(c_\mu,\beta,q,p)r^{\beta p-q} \int_{B(z, r_{i_z})} f(y)^p \, d \mu (y)\,.
\]
  Using   the $5r$-covering lemma \cite[Lemma~1.7]{MR2867756}, we obtain countably many points $z_k \in F$, $k\in I$,   
such that the  corresponding  balls $B(z_k,r_k)=B(z_k,r_{i_{z_k}})$ are pairwise disjoint and $F \subset \bigcup_{  k\in I} B(z_k,5r_k)$.
Recalling definition \eqref{e.hausdorff_content_density}  of the Hausdorff content of codimension $q$, and using the doubling property of measure $\mu$ and the fact that $\{B(z_k,r_k)\}_{  k\in I}$ is a family of   pairwise   disjoint balls, we obtain
\begin{align*}
\Ha^{\mu,q}_{r}(F)&\leq \sum_{  k\in I} \mu(B(z_k,5r_k))(5r_k)^{-q}\leq c_\mu^3 \sum_{  k\in I} \mu(B(z_k,r_k))r_k^{-q}\\
&\leq C(c_\mu,\beta,q,p) r^{\beta p-q} \sum_{  k\in I} \int_{B(z_k,r_k)} f(y)^p \, d \mu (y) \leq C(c_\mu,\beta,q,p) r^{\beta p-q}\int_{X} f(y)^p \, d \mu (y)\,.
\end{align*}
Applying \eqref{e.hdensity}, we get
\[
 r^{-q}\mu(B(x,r))\le c_1^{-1}\Ha^{\mu, q}_{r}(F)
\le  C(c_1,c_\mu,\beta,q,p)  r^{\beta p-q}\int_{X} f(y)^p \, d \mu (y)\,.
\]
Simplifying the exponents yields
\begin{equation}\label{e.first_case}
r^{-\beta p}\mu(B(x,r))\le  C(c_1,c_\mu,\beta,q,p) \int_{X} f(y)^p \, d \mu (y)\,.
\end{equation}

In the second case, we assume that there exists $z\in F$ such that
\[
\int_{B(z,r/5)} \frac{f(y)d(z,y)^\beta}{\mu(B(z,d(z,y)))}\,d\mu(y)<\frac{1}{2}\,.
\]
By  inequality \eqref{e.X_is_large} and  Lemma \ref{CommonEstimate} we have
\begin{equation}\label{e.second_case2}
r^{-\beta p}\mu(B(x,r))\le 
C(c_\mu,c_\sigma,\beta,\sigma,p) \int_{X} f(y)^p\,d\mu(y)\,. 
\end{equation}

From \eqref{e.first_case} and \eqref{e.second_case2} it follows that
\[
r^{-\beta p}\mu(B(x,r))\le 
C(c_1,c_\mu,c_\sigma,\sigma,\beta,q,p)\int_{X} f(y)^p\,d\mu(y)
\]
for all  nonnegative test functions $f\in L^p(X)$ for the capacity $R_{\beta,p}(F)$. 
By taking infimum over all such functions, we obtain
\[
r^{-\beta p}\mu(B(x,r))\le C(c_1,c_\mu,c_\sigma,\sigma,\beta,q,p)R_{\beta,p}(E\cap\overline{B(x,r)})
\]
for all $x\in E$ and   all   $0<r<\diam(E)/8$, as desired.
\end{proof}

Lemma~\ref{l.hajlasz implies capacity}  leads to  an analogue of Lemma~\ref{l.Hequiv} for the Riesz $(\beta,p)$-capacities. 
 
\begin{lemma}\label{c.balls}
  Assume that $X$ is connected. Let $1<p<\infty$ and $0<\beta\le 1$,
and assume that $\mu$   satisfies the quantitative reverse doubling condition \eqref{e.reverse_doubling}
for some exponent $\sigma>\beta p$. 
Then there is a constant  $C=C(c_\mu,c_\sigma,\sigma,\beta,p)>0$ 
   such that
\[
C^{-1}r^{-\beta p}\mu(B(x,r))\le R_{\beta,p}(\overline{B(x,r)})\le  Cr^{-\beta p}\mu(B(x,r)) 
\]
for all $x\in X$ and all $0<r<(1/8)\diam(X)$. 
\end{lemma}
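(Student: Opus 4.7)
The plan is to obtain both inequalities as direct consequences of results already established in the paper, by taking $E = X$ in the statements where a set $E$ appears. The upper bound comes from combining Theorem~\ref{p.HaCap_geq_RieszCap} with the upper bound of Lemma~\ref{l.Hequiv}, and the lower bound is a direct application of Lemma~\ref{l.hajlasz implies capacity}.

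For the upper bound, I would apply Theorem~\ref{p.HaCap_geq_RieszCap} with $E = X$, which gives
\[
R_{\beta,p}(\overline{B(x,r)}) \le C(c_\mu,p)^{-1}\,\cp_{\beta,p}\big(\overline{B(x,r)},B(x,2r)\big).
\]
Then the upper bound of Lemma~\ref{l.Hequiv} (which does not require connectedness, although here $X$ is connected) yields
\[
\cp_{\beta,p}\big(\overline{B(x,r)},B(x,2r)\big) \le C(c_\mu,p)\, r^{-\beta p}\mu(B(x,r)),
\]
and combining the two produces the desired upper bound with a constant of the form $C(c_\mu,p)$.

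For the lower bound, I would apply Lemma~\ref{l.hajlasz implies capacity} with $E = X$. The set $X$ is closed, and for any $q \ge 0$ the Hausdorff content density condition~\eqref{e.hausdorff_content_density} holds trivially with $c_0 = 1$, since $E \cap \overline{B(x,r)} = \overline{B(x,r)}$; by Remark~\ref{r.Hlasku} this corresponds to $c_1 = 1$ in the formulation used in the proof of Lemma~\ref{l.hajlasz implies capacity}. Because $\beta p > 0$, I may fix once and for all some $q \in (0,\beta p)$, for instance $q = \beta p/2$, and the quantitative reverse doubling assumption with exponent $\sigma > \beta p$ is part of the hypothesis. Lemma~\ref{l.hajlasz implies capacity} then gives
\[
R_{\beta,p}(\overline{B(x,r)}) \ge C\, r^{-\beta p}\mu(B(x,r))
\]
for all $x \in X$ and $0 < r < (1/8)\diam(X) = (1/8)\diam(E)$.

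There is no real obstacle in this argument, since the statement is essentially a corollary of the earlier results. The only point requiring care is the constant dependence in the final inequality: the a priori dependence on $c_1$ and $q$ in Lemma~\ref{l.hajlasz implies capacity} must be eliminated. This is achieved by the trivial choice $c_1 = 1$ coming from $E = X$ and by the fixed choice $q = \beta p/2$, so the $q$-dependence is absorbed into the dependence on $\beta$ and $p$. The resulting constant depends only on $c_\mu, c_\sigma, \sigma, \beta, p$, as claimed.
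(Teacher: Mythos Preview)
Your proposal is correct and follows essentially the same approach as the paper: the paper also takes $E=X$, invokes Lemma~\ref{l.hajlasz implies capacity} for the lower bound (noting that $X$ trivially satisfies the Hausdorff content density condition for every $q\ge 0$), and combines Theorem~\ref{p.HaCap_geq_RieszCap} with the upper bound in Lemma~\ref{l.Hequiv} for the upper bound. Your explicit handling of the constant dependence via the choice $q=\beta p/2$ is a detail the paper leaves implicit.
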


\begin{proof}
Observe that  $X$ itself  is a closed set
that
satisfies the Hausdorff content density condition \eqref{e.hausdorff_content_density} for every  $q\ge 0$. 
Hence, the first inequality follows from Lemma
\ref{l.hajlasz implies capacity}.
On the other hand, by Theorem \ref{p.HaCap_geq_RieszCap}
and inequality \eqref{e.Hcomp}, we obtain
\[
R_{\beta,p}( \overline{B(x,r)})\le C \cp_{\beta,p} \big(\overline{B(x,r)},B(x,2r) \big)\le C\,r^{-\beta p}\mu(B(x,r))\,.
\]
The second inequality follows.
\end{proof}

 We finish this section with an example showing that the quantitative reverse doubling condition \eqref{e.reverse_doubling} for some exponent $\sigma>\beta p$ cannot be removed from Lemma \ref{c.balls}.  
 
\begin{example}
  Consider   $X=\R^n$ equipped with the Euclidean
distance $d$ and the $n$-di\-men\-sional Lebesgue measure which we denote by $\mu$. Fix $1< p<\infty$ and $0<\beta\le 1$ such that
$\beta p>n$.
 Observe that
the quantitative reverse doubling condition \eqref{e.reverse_doubling}
does not hold in $(\R^n,d,\mu)$ for any $\sigma>\beta p$, since $\beta p>n$.
Define $A_j=B(0,3j)\setminus B(0,2j)$ for all $j=1,2,\ldots$.
Let $z\in \overline{B(0,1)}$. Then
\begin{align*}
I_\beta \left(j^{-\beta}\ch{A_j}\right)(z)
&=j^{-\beta}\int_{A_j}\frac{d(z,y)^\beta}{\mu(B(z,d(z,y)))}\,d\mu(y)
\ge j^{-\beta}j^{\beta}\int_{A_j}\frac{1}{\mu(B(z,d(z,y)))}\,d\mu(y)
\\&\ge \int_{A_j}\frac{1}{\mu(B(0,5j))}\,d\mu(y)= \frac{\mu(A_j)}{\mu(B(0,5j))}= C(n)>0\,.
\end{align*}
It follows that functions $f_j=C(n)^{-1}j^{-\beta}\ch{A_j}$ are admissible
test functions for the Riesz $(\beta,p)$-capacity of $\overline{B(0,1)}$  for every $j=1,2,\ldots$,  and therefore
\[
R_{\beta,p}(\overline{B(0,1)})\le \lVert f_j\rVert_{p}^p
=C(n,p)j^{-\beta p}\mu(A_j) \le C(n,p)j^{n-\beta p}
\]
for all $j=1,2,\ldots$. By taking $j\to \infty$, we see   that  
$R_{\beta,p}(\overline{B(0,1)})=0$. 
\end{example}

\section{Equivalence of density conditions}\label{s.lequiv_density}

  In Corollary~\ref{c.HCapEqvRCap} we showed that the 
Riesz and Haj{\l}asz $(\beta,p)$-capacities
are comparable in a complete and connected space $X$
if $\mu$ satisfies a suitable reverse doubling condition and the kernel estimate~\eqref{e.kernel} holds.
Next, we consider the corresponding capacity density conditions
and show their equivalence in Theorem~\ref{t.Riesz_and_Hajlasz}.
The kernel estimate is no longer needed, but instead we need to assume that
the space $X$ is geodesic since that is required in Theorem~\ref{t.main_characterization}.

The  Haj{\l}asz   capacity density condition is known to be 
self-improving (open-ended) by the results in~\cite{CV2021},
and hence the same also holds for the  Riesz   capacity density condition.
This is stated explicitly in Corollary~\ref{c.riesz-improvement}.

\begin{definition}\label{d.cap_density}
A closed set $E\subset X$ satisfies   the   Haj{\l}asz $(\beta,p)$-capacity density condition,   
for $1\le p<\infty$ and $0<\beta\le 1$, if there
  is   a constant $c_0>0$ such that
\begin{equation}\label{e.Hajlaszc capacity density condition}
\cp_{\beta,p}(E\cap \overline{B(x,r)},B(x,2r))\ge 
 c_0 \: \cp_{\beta,p}(\overline{B(x,r)},B(x,2r)) %
\end{equation}
for all $x\in E$ and all $0<r<(1/8)\diam(E)$.
\end{definition}

It follows from Lemma \ref{l.Hequiv} that a closed set $E$ in a connected metric space $X$ satisfies 
  the   Haj{\l}asz $(\beta,p)$-capacity density condition,   
for $1\le p<\infty$ and $0<\beta\le 1$, if and only if
there is a constant  $c_1>0$  such that
\begin{equation}\label{e.h}
\cp_{\beta,p}(E\cap \overline{B(x,r)},B(x,2r))\ge c_1 \: r^{-\beta p}\mu(B(x,r))
\end{equation}
for all $x\in E$ and all $0<r<(1/8)\diam(E)$. 
 This condition   was taken as the   definition
of the Haj{\l}asz $(\beta,p)$-capacity density condition
in \cite{CV2021}. 

 The following   result   from \cite[  Theorem~9.5]{CV2021} shows that in  complete geodesic metric spaces 
  the   Haj\l asz capacity density condition is equivalent to  a  Hausdorff content density condition. 

\begin{theorem}\label{t.main_characterization}
  Assume that $X$ is   a complete geodesic space.  Let
$1<p<\infty$ and $0<\beta\le 1$, and let   $E\subset X$ be a closed 
set. Then the following conditions are equivalent
\begin{enumerate}
\item[\textup{(i)}] $E$ satisfies the Haj{\l}asz $(\beta,p)$-capacity density condition \eqref{e.Hajlaszc capacity density condition}.
\item[\textup{(ii)}] $E$ satisfies the Hausdorff content density condition \eqref{e.hausdorff_content_density} for some $0 < q < \beta p$.
\end{enumerate}
\end{theorem}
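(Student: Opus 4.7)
My plan is to establish the two implications separately, with (ii) $\Rightarrow$ (i) being the more direct direction.

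For (ii) $\Rightarrow$ (i), I take any admissible pair $(u, g)$ for $\cp_{\beta, p}(F, B(x, 2r))$, where $F = E \cap \overline{B(x, r)}$. The chaining argument from the proof of Theorem \ref{p.HaCap_geq_RieszCap}, based on Lemma \ref{l.chain.estimate.improved} (available since $X$ is geodesic), gives the pointwise bound $1 = u(z) \leq C I_\beta g(z)$ for every $z \in F$. Because $u$ vanishes outside $B(x, 2r)$, this potential is effectively localized to a ball of radius comparable to $r$ around $z$. From this localized bound, the dyadic decomposition and pigeon-hole argument of Case 1 in the proof of Lemma \ref{l.hajlasz implies capacity} --- which uses $q < \beta p$ to ensure convergence of $\sum_i r_i^{\beta - q/p}$ --- produces for each $z \in F$ a radius $r_z \leq r/5$ such that
\[
  r_z^{-q} \mu(B(z, r_z)) \leq C r^{\beta p - q} \int_{B(z, r_z)} g^p \, d\mu.
\]
A Vitali $5r$-covering extraction from $\{B(z, r_z)\}_{z \in F}$ together with the Hausdorff content density condition (ii) yields $r^{-q} \mu(B(x, r)) \leq C \Ha^{\mu, q}_r(F) \leq C r^{\beta p - q} \int_X g^p \, d\mu$, and taking the infimum over $g$ gives (i). The ``non-local'' Case 2 from the proof of Lemma \ref{l.hajlasz implies capacity} does not appear here, so no assumption $\sigma > \beta p$ on the reverse doubling exponent is needed.

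For (i) $\Rightarrow$ (ii), I would use a Maz'ya-type upper bound for the Haj{\l}asz capacity in terms of a Hausdorff content. Fix any $q \in (0, \beta p)$ and a small $\eta \in (0,1)$ so that for every finite cover $\{B(x_k, r_k)\}_{k=1}^N$ of $F$ with $r_k \leq \eta r$, the bumps $\phi_k(y) = \max\{0, 1 - r_k^{-\beta}\dist(y, B(x_k, r_k))^\beta\}$ all have support in $B(x, 2r)$; any cover relevant for the Hausdorff content can be reduced to such a finite cover by the compactness of $F$. Setting $u = \max_k \phi_k$ and using that $\max_k g_k \in \mathcal{D}_H^\beta(\max_k \phi_k)$ whenever $g_k \in \mathcal{D}_H^\beta(\phi_k)$ (valid off the finite union of exceptional sets), I may take $g_k = r_k^{-\beta} \chi_{B(x_k, 2 r_k)}$ as in the proof of Lemma \ref{l.Hequiv}. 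Then $(\max_k a_k)^p \leq \sum_k a_k^p$ and doubling give
\[
  \int_X g^p \, d\mu \leq C \sum_k r_k^{-\beta p} \mu(B(x_k, r_k)) \leq C r^{q - \beta p} \sum_k r_k^{-q} \mu(B(x_k, r_k)),
\]
where the last step uses $r_k \leq r$ and $q < \beta p$. Taking the infimum over covers yields $\cp_{\beta, p}(F, B(x, 2r)) \leq C r^{q - \beta p} \Ha^{\mu, q}_{\eta r}(F)$. Combined with (i), this gives $\Ha^{\mu, q}_{\eta r}(F) \geq c r^{-q} \mu(B(x, r))$, and a direct doubling/covering argument --- refining any cover of $F$ with radii $\leq r$ into a cover with radii $\leq \eta r$ at constant cost, by splitting each ball $B(x_k, r_k)$ with $r_k > \eta r$ into a bounded number (depending only on $\eta$ and $c_\mu$) of balls of radius $\eta r$ --- shows $\Ha^{\mu, q}_{\eta r}(F) \leq C \Ha^{\mu, q}_r(F)$, which yields (ii).

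The main technical point requiring care is the Maz'ya-type inequality in the second direction: in particular, verifying that $u = \max_k \phi_k$ is a valid admissible test function (membership in $\Lip_\beta(X)$ requires the cover to be finite, arranged via the compactness of $F$) and that the pointwise identity $\max_k g_k \in \mathcal{D}_H^\beta(u)$ transfers to the $L^p$ bound via $(\max_k a_k)^p \leq \sum_k a_k^p$. Beyond this, the proof uses only the doubling property, Vitali-type covering, and the chaining machinery already developed in the earlier sections.
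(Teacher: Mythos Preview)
The paper does not prove this theorem; it quotes it from~\cite[Theorem~9.5]{CV2021}. Your direction (ii)~$\Rightarrow$~(i) is essentially correct and is indeed the easy implication: the localized chaining bound together with the Case~1 pigeonhole from Lemma~\ref{l.hajlasz implies capacity} gives the capacity lower bound without the reverse doubling exponent~$\sigma>\beta p$.

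Your direction (i)~$\Rightarrow$~(ii), however, contains a fatal arithmetic error. You claim
\[
\sum_k r_k^{-\beta p}\mu(B(x_k,r_k)) \;\le\; C\,r^{q-\beta p}\sum_k r_k^{-q}\mu(B(x_k,r_k))
\]
from $r_k\le r$ and $q<\beta p$. But $r_k^{-\beta p}\le r^{q-\beta p}r_k^{-q}$ is equivalent to $(r_k/r)^{\beta p-q}\ge 1$, which for $\beta p-q>0$ forces $r_k\ge r$, the opposite of what you have. The inequality actually goes the other way, so the Maz'ya-type bound you obtain from the bump functions $\phi_k$ is only
\[
\cp_{\beta,p}(F,B(x,2r))\;\lesssim\;\Ha^{\mu,\beta p}_{\eta r}(F),
\]
i.e.\ the borderline codimension $q=\beta p$, not any $q<\beta p$.

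This is not a repairable detail: obtaining (ii) with some $q$ \emph{strictly} below $\beta p$ from the capacity density condition is precisely the self-improvement (open-endedness) of the Haj{\l}asz capacity density condition, and is the main content of~\cite{CV2021}. That argument passes through local Poincar\'e-type inequalities, maximal function truncation arguments, and a Keith--Zhong-style self-improvement step; it is not accessible through an elementary cover-and-bump construction. In short, your plan proves the easy half but misses the substantial half of the theorem.
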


  Theorem~\ref{t.main_characterization} and Lemma~\ref{l.hajlasz implies capacity} are the main
ingredients in the proof of Theorem~\ref{t.Riesz_and_Hajlasz}, which 
is one of our main results and adds the following Riesz capacity density condition
to the list of conditions in Theorem~\ref{t.main_characterization}. 
See also~\cite[Theorem~9.5]{CV2021} for many more equivalent conditions,
and Theorem~\ref{t.main_wide_beta=1} for the case $\beta=1$.  

\begin{definition}\label{d.riesz_cap_density}
A closed set $E\subset X$ satisfies   the   Riesz $(\beta,p)$-capacity density condition,   
for $1\le p<\infty$ and $\beta >0$, if there
  is   a constant $c_0>0$ such that
\begin{equation}\label{e.Riesz capacity density condition}
R_{\beta,p}(E\cap \overline{B(x,r)})\ge c_0 \: R_{\beta,p}(\overline{B(x,r)}) 
\end{equation}
 for all $x\in E$ and all $0<r<(1/8)\diam(E)$.
\end{definition}

\begin{theorem}\label{t.Riesz_and_Hajlasz}
  Assume that $X$ is   a complete geodesic space.   Let
$1<p<\infty$ and $0<\beta\le 1$, and
assume that $\mu$   satisfies the quantitative reverse doubling condition \eqref{e.reverse_doubling}
for some exponent $\sigma>\beta p$. Then the following
conditions are equivalent   for a closed set $E\subset X$:  
\begin{enumerate}
\item[\textup{(i)}] $E$ satisfies the Riesz $(\beta,p)$-capacity density condition \eqref{e.Riesz capacity density condition}.
\item[\textup{(ii)}] $E$ satisfies the Haj{\l}asz $(\beta,p)$-capacity density condition \eqref{e.Hajlaszc capacity density condition}.
\item[\textup{(iii)}] $E$ satisfies the Hausdorff content density condition \eqref{e.hausdorff_content_density} for some $0 < q < \beta p$.
\end{enumerate}
\end{theorem}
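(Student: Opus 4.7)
The heavy lifting has already been done in the preceding sections; what remains is to assemble the pieces into a triangle of implications. The plan is to use Theorem~\ref{t.main_characterization} to dispose of the equivalence (ii)$\Leftrightarrow$(iii) immediately (it applies since $X$ is complete and geodesic), and then to obtain the two remaining implications (iii)$\Rightarrow$(i) and (i)$\Rightarrow$(ii) by combining the capacity-of-balls estimates in Lemma~\ref{l.Hequiv} and Lemma~\ref{c.balls} with the one-sided comparisons between Riesz and Haj{\l}asz capacities. Throughout, connectedness of $X$ (which we have since geodesic $\Rightarrow$ connected), doubling of $\mu$, and the quantitative reverse doubling~\eqref{e.reverse_doubling} with $\sigma>\beta p$ are all in force, so both Lemma~\ref{l.hajlasz implies capacity} and Lemma~\ref{c.balls} are available.

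\textbf{Step 1: (iii)$\Rightarrow$(i).} Fix $x\in E$ and $0<r<(1/8)\diam(E)$. Assuming the Hausdorff content density condition \eqref{e.hausdorff_content_density} for some $0<q<\beta p$, Lemma~\ref{l.hajlasz implies capacity} yields
\[
R_{\beta,p}(E\cap\overline{B(x,r)}) \geq C_1\, r^{-\beta p}\mu(B(x,r)).
\]
On the other hand, the upper bound in Lemma~\ref{c.balls} gives $R_{\beta,p}(\overline{B(x,r)})\leq C_2\, r^{-\beta p}\mu(B(x,r))$. Dividing, we obtain the Riesz capacity density condition \eqref{e.Riesz capacity density condition} with $c_0=C_1/C_2$.

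\textbf{Step 2: (i)$\Rightarrow$(ii).} Fix $x\in E$ and $0<r<(1/8)\diam(E)$. From (i) and the lower bound in Lemma~\ref{c.balls} we get
\[
R_{\beta,p}(E\cap\overline{B(x,r)}) \geq c_0\, R_{\beta,p}(\overline{B(x,r)}) \geq c_0 C_3^{-1}\, r^{-\beta p}\mu(B(x,r)).
\]
Applying Theorem~\ref{p.HaCap_geq_RieszCap}, which requires only connectedness of $X$, we deduce
\[
\cp_{\beta,p}\bigl(E\cap\overline{B(x,r)},B(x,2r)\bigr) \geq C_4\, R_{\beta,p}(E\cap\overline{B(x,r)}) \geq C_5\, r^{-\beta p}\mu(B(x,r)).
\]
Since $r<(1/8)\diam(E)\leq(1/8)\diam(X)$, the upper bound in Lemma~\ref{l.Hequiv} yields $\cp_{\beta,p}(\overline{B(x,r)},B(x,2r))\leq C_6\, r^{-\beta p}\mu(B(x,r))$, and combining these two estimates gives the Haj{\l}asz capacity density condition \eqref{e.Hajlaszc capacity density condition}.

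\textbf{Step 3: (ii)$\Leftrightarrow$(iii).} This is precisely the content of Theorem~\ref{t.main_characterization}, which applies since $X$ is complete geodesic, $1<p<\infty$, and $0<\beta\leq 1$. Together with Steps 1 and 2 this closes the cycle of implications.

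There is no real obstacle here; the only mild care needed is to verify that the radius ranges appearing in the hypotheses of the cited lemmas are compatible with the range $0<r<(1/8)\diam(E)$ in the density conditions (they are, since $\diam(E)\leq\diam(X)$), and that the reverse doubling exponent $\sigma>\beta p$ is the common hypothesis that unlocks both Lemma~\ref{l.hajlasz implies capacity} and Lemma~\ref{c.balls}. Note also that in Step 2 the kernel estimate~\eqref{e.kernel} is \emph{not} invoked, because we only need the direction of comparison from Theorem~\ref{p.HaCap_geq_RieszCap}, which is free of that assumption; this is precisely why the equivalence of capacity density conditions holds under weaker hypotheses than the direct comparability in Corollary~\ref{c.HCapEqvRCap}.
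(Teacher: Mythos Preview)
Your proof is correct and follows essentially the same approach as the paper: the cycle (iii)$\Rightarrow$(i)$\Rightarrow$(ii)$\Leftrightarrow$(iii) is closed using exactly the same ingredients (Lemma~\ref{l.hajlasz implies capacity} and Lemma~\ref{c.balls} for (iii)$\Rightarrow$(i), Theorem~\ref{p.HaCap_geq_RieszCap} together with Lemma~\ref{c.balls} and Lemma~\ref{l.Hequiv} for (i)$\Rightarrow$(ii), and Theorem~\ref{t.main_characterization} for (ii)$\Leftrightarrow$(iii)). Your additional remarks on radius ranges and on why the kernel estimate~\eqref{e.kernel} is not needed are accurate and worth keeping.
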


\begin{proof}
  Since $X$ is geodesic, it is in particular connected.  
To show the implication from (i) to (ii) we assume that $E$ satisfies the Riesz $(\beta,p)$-capacity density condition \eqref{e.Riesz capacity density condition}. By  using this condition, Theorem~\ref{p.HaCap_geq_RieszCap}
and Lemma~\ref{c.balls}, we find that inequality \eqref{e.h} holds
  for all $x\in E$ and all $0<r<(1/8)\diam(E)$.   
  By Lemma~\ref{l.Hequiv}, this   is equivalent to the Haj\l asz $(\beta,p)$-capacity density condition.

The implication from (ii) to (iii) is a part of Theorem~\ref{t.main_characterization}, and   
the implication from (iii) to (i) follows by Lemma~\ref{l.hajlasz implies capacity} and Lemma~\ref{c.balls}.
\end{proof}

From Theorem~\ref{t.Riesz_and_Hajlasz} 
we obtain as a corollary
 that the Riesz capacity density condition is self-improving. 
  This   can be regarded as an extension of Lewis' result \cite{MR946438}
to the setting of   complete geodesic spaces.   
For the proof of the corollary, it suffices
to notice that condition (iii) in Theorem~\ref{t.Riesz_and_Hajlasz}
remains valid even if we modify both $\beta$ and $p$ slightly.
  In the Euclidean case, this approach to the self-improvement 
of Riesz capacity density condition, which is largely
based on the results in~\cite{CV2021}, is
completely different  from   the original argument of Lewis.

\begin{corollary}\label{c.riesz-improvement}
  Assume that $X$ is   a complete geodesic space.    Let
$1<p<\infty$ and $0<\beta\le 1$, and
assume that $\mu$   satisfies the quantitative reverse doubling condition \eqref{e.reverse_doubling}
for some exponent $\sigma>\beta p$.
  If   %
a closed set $E\subset X$ satisfies the Riesz $(\beta,p)$-capacity density condition,
  then   there exists $0<\delta<\min\{\beta,p-1\}$ such that 
$E$ satisfies the Riesz $(\gamma,s)$-capacity density condition
  whenever  
$\beta-\delta<\gamma\le 1$,   $p-\delta<s<\infty$   and   $\sigma > \gamma s$.
\end{corollary}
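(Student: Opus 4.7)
The strategy is to route the argument through the Hausdorff content density condition appearing as condition (iii) in Theorem~\ref{t.Riesz_and_Hajlasz}. The key point is that this condition involves only the single parameter $q$ (the codimension), and does not depend on $\beta$ or $p$ except through the constraint $0<q<\beta p$. Since this constraint is an open one, a small perturbation of $(\beta,p)$ to a nearby pair $(\gamma,s)$ leaves a given admissible $q$ still in the allowed range.

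First, I would apply the implication (i)$\Rightarrow$(iii) of Theorem~\ref{t.Riesz_and_Hajlasz} to the hypothesized Riesz $(\beta,p)$-capacity density condition. This yields some $q_0$ with $0<q_0<\beta p$ such that $E$ satisfies the Hausdorff content density condition of codimension $q_0$. Note that this intermediate condition is \emph{intrinsic} to $E$ and the space and carries no memory of the original $(\beta,p)$.

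Next, I would choose $\delta>0$ small enough to satisfy three requirements simultaneously: (a) $\delta<\beta$ so that $\gamma>\beta-\delta$ forces $\gamma>0$; (b) $\delta<p-1$ so that $s>p-\delta$ forces $s>1$ (thus $1<s<\infty$); and (c) $(\beta-\delta)(p-\delta)>q_0$, so that the open constraint on $q_0$ is preserved. Condition (c) holds for all sufficiently small $\delta$ by continuity, since the polynomial $\delta\mapsto(\beta-\delta)(p-\delta)-q_0$ takes the positive value $\beta p-q_0>0$ at $\delta=0$. With this $\delta<\min\{\beta,p-1\}$ fixed, any $(\gamma,s)$ in the prescribed range satisfies $\gamma s>q_0$, so we still have $0<q_0<\gamma s$.

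Finally, I would apply the implication (iii)$\Rightarrow$(i) of Theorem~\ref{t.Riesz_and_Hajlasz}, this time with the parameters $(\gamma,s)$ in place of $(\beta,p)$. The standing hypotheses of that theorem must be verified: $X$ is complete and geodesic by assumption, and the quantitative reverse doubling condition \eqref{e.reverse_doubling} holds with exponent $\sigma>\gamma s$, which is explicitly included among the hypotheses on $(\gamma,s)$ in the corollary. The same $q_0$ then serves as a witness for condition (iii) with respect to $(\gamma,s)$, and Theorem~\ref{t.Riesz_and_Hajlasz} delivers the Riesz $(\gamma,s)$-capacity density condition. I do not expect a genuine obstacle; the only subtlety is the bookkeeping in the choice of $\delta$, and in particular the requirement $\delta<p-1$ that is built into the statement precisely to keep $s>1$ (so that Theorem~\ref{t.Riesz_and_Hajlasz}, which assumes $1<p<\infty$, remains applicable after perturbation).
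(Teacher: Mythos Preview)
Your argument is correct and follows exactly the route sketched in the paper: pass via Theorem~\ref{t.Riesz_and_Hajlasz} to the Hausdorff content density condition (iii), observe that this condition does not see the parameters $(\beta,p)$ except through the open constraint $q_0<\beta p$, and then return to the Riesz density condition for any nearby $(\gamma,s)$ with $\sigma>\gamma s$. Your write-up is in fact more detailed than the paper's, which simply remarks that condition~(iii) remains valid when $\beta$ and $p$ are modified slightly.
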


\section{Comparability of Haj{\l}asz \texorpdfstring{$(1,p)$}{(1,p)}-capacity and \texorpdfstring{$p$}{p}-capacity}\label{s.equivbeta1}

  In this last section of the paper, we add into the considerations 
one more notion of capacity, the variational $p$-capacity.
The definition of this capacity is given in terms of 
$p$-weak upper gradients, and we begin by recalling relevant preliminaries.
The main result of this section is Theorem~\ref{t.comparison of capacities}, 
which shows the comparability of the variational $p$-capacity and the $(1,p)$-Haj{\l}asz capacity 
under suitable assumptions. The comparability of the capacities implies
also the equivalence of the corresponding density conditions, stated 
at the end of the section in Corollary~\ref{c.pcap_and_hajlasz},
which in turn can be combined with the results in Section~\ref{s.lequiv_density}
to give further lists of equivalent conditions in the case $\beta=1$;
see Theorems~\ref{t.main_wide_beta=1} and~\ref{t.omnibus}.

  Let $u$ be    a real valued function on $X$. 
  A Borel function $g\ge 0$ on $X$ is  
an {\em upper gradient} of $u$   if   for all curves $\gamma$  
  (see Section \ref{s.metric})   joining 
any two points $x,y \in X$ we have
\begin{equation}\label{e.modulus}
\lvert u(x)-u(y)\rvert \le \int_\gamma g\,ds\,,
\end{equation}
whenever both $u(x)$ and $u(y)$ are finite, and $\int_\gamma g\,ds =\infty$
otherwise. 
In addition, a measurable function $g\ge 0$ on $X$ is a {\em $p$-weak upper gradient}
of   $u$, for $1\le p<\infty$,    
if inequality~\eqref{e.modulus} holds for $p$-almost every curve $\gamma$ joining arbitrary points $x$ and $y$ in $X$. That is, there exists a nonnegative Borel function $\rho\in L^p_{\textup{loc}}(X)$ such that 
$\int_\gamma \rho\,ds=\infty$ whenever~\eqref{e.modulus} does not hold for the curve $\gamma$.
 Here  $\rho\in L^p_{\textup{loc}}(X)$ means  that
for each $x\in X$ there exists $r_x>0$ such that $\rho\in L^p(B(x,r_x))$.
We refer to~\cite{MR2867756} for more information on $p$-weak upper gradients.

We say that the space $X$ supports a {\em $p$-Poincar\'e  inequality}, for $1\le p<\infty$,  
if there exist constants $C_P>0$ and $\lambda\ge 1$
such that for all balls $B\subset X$, all measurable
functions $u$ on $X$, and all $p$-weak upper gradients $g$ of $u$   we have  
\begin{equation}\label{e.poincare}
 \vint_B\lvert u-u_B\rvert\,d\mu 
\le C_P\, \diam(B) \left(\vint_{\lambda B} g^p\,d\mu\right)^{1/p}\,.  
\end{equation} 
 Here $u_B$ is the integral average of $u$ over $B$ as in \eqref{e.intaverage},  and the left-hand side of~\eqref{e.poincare} is interpreted as $\infty$ whenever $u_B$ is not defined. 
We refer to \cite[Chapter~4]{MR2867756} for further details. 
  The doubling condition~\eqref{e.doubling} and 
the Poincar\'e inequality are the standard assumptions on analysis on 
metric spaces based on (weak) upper gradients;
recall that we assume throughout that $\mu$
is doubling with a constant $c_\mu$.

\begin{definition}
  Let $1\le p<\infty$ and let $\Omega\subset X$  be a bounded open set.   
The variational $p$-capacity of a  closed   subset $F\subset\Omega$ is 
\begin{equation}\label{e.capacity}
\cp_p(F,\Omega) = \inf_u\inf_g \int_\Omega g(x)^p\,d\mu(x)\,,
\end{equation}
where the infimums are taken over all $u\in \Lip(X)$, with $u\ge 1$ in $F$, $u=0$ in $X\setminus \Omega$, 
and all $p$-weak upper gradients $g$ of $u$. If there are no such functions $u$, we set $\cp_p(F,\Omega)=\infty$.  
\end{definition}

\begin{remark}\label{e.equiv_cap}
If $\cp_p(F,\Omega)<\infty$, 
then the infimum in~\eqref{e.capacity} can be restricted to
functions $u\in \Lip(X)$ satisfying $\ch{F} \le u\le \ch{\Omega}$
and to $p$-weak upper gradients $g$ of $u$ such that $g=g\ch{\Omega}\in L^p(X)$.
For a proof we refer to \cite[Remark 2.1]{MR3673660}. 
\end{remark}

\begin{definition}\label{d.capacity density}
  A   closed set $E\subset X$ satisfies   the   $p$-capacity density condition,
for $1\le p<\infty$, if there
  is   a constant $c_0>0$ such that
\begin{equation}\label{e.capacity density}
\cp_p(E\cap \overline{B(x,r)},B(x,2r))\ge c_0 \cp_p(\overline{B(x,r)}, B(x,2r))
\end{equation}
for all $x\in E$ and all $0<r<(1/8)\diam(E)$.
\end{definition}

\begin{remark}\label{r.Capacity and measure} 
If $X$ supports a $p$-Poincar\'e inequality for some $1\le p<\infty$, 
then there is   a close   connection between   the   $p$-capacity and the measure
  of balls, similar to Lemma~\ref{l.Hequiv}.   
Namely, there is a constant $C>0$ such that for all
balls $B=B(x,r)$, with $0<r<(1/8)\diam(X)$, and for 
  all closed sets   $F\subset \overline{B}$   we have  
\begin{equation}\label{e.comparison}
\frac{\mu(F)}{C\, r^p}\le \cp_p(F,2B)\le \frac{c_\mu \,\mu(B)}{r^p}\,.
\end{equation} 
For a proof of this fact see, for instance, \cite[Proposition~6.16]{MR2867756}. 
In particular,   it holds 
for all balls $B=B(x,r)$ with $0<r<(1/8)\diam(X)$ that  
\begin{equation}\label{e.capacity of a ball}
\frac{\mu(B)}{C \: r^p }\leq  \cp_p\big( \overline B, 2B \big) \leq   \frac{c_\mu\:\mu(B)}{r^p}\,.
\end{equation}
As a consequence, if $E\subset X$ is a closed set and $X$ supports
a $p$-Poincar\'e inequality, then $E$ satisfies   the   $p$-capacity density condition~\eqref{e.capacity density} if and only if 
there is a constant $c_1>0$ such that
 
\[
\cp_p(E\cap \overline{B(x,r)},B(x,2r))\ge c_1 r^{-p}\mu(B(x,r))
\]
for all $x\in E$ and all $0<r<(1/8)\diam(E)$.
\end{remark}

  The following Theorem~\ref{t.comparison of capacities} says that for $1<p<\infty$ the variational   
$p$-capacity and $(1,p)$-Haj{\l}asz capacity  are comparable in the appropriate geometrical setting.
In the proof we use the noncentered maximal function $Mf$, which is defined by \eqref{e.max_funct_def}.

\begin{theorem}\label{t.comparison of capacities}
  Let   %
$1<p<\infty$. Assume that $\Omega \subset X$ is a bounded open set and 
$F\subset \Omega$ is a closed set. Then  
\begin{equation}
\label{e.inequality one}
 \cp_{p} (F,\Omega) \leq  4^p   \cp_{1,p}(F,\Omega)\,.
\end{equation}
Moreover, if $X$ supports  a $q$-Poincar\'e inequality  for some $1\leq q <p$,   with constants $C_P$ and $\lambda\ge 1$,  then there   is a constant  $C=C(C_P,c_\mu,p,q)$  such that 
\begin{equation}
\label{e.inequality two}
 \cp_{1,p} (F,\Omega) \leq C  \cp_p(F,\Omega)\,.
\end{equation}
\end{theorem}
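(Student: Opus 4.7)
For \eqref{e.inequality one}, the plan is to take an admissible pair $(u, g)$ for $\cp_{1,p}(F,\Omega)$---that is, $u \in \Lip(X)$ with $u \ge 1$ on $F$, $u = 0$ on $X\setminus\Omega$, and $g\in\mathcal{D}_H^{1}(u)$---and show that a Borel representative of $4g$ is a $p$-weak upper gradient of $u$. This immediately yields $\cp_p(F,\Omega)\le \int_X (4g)^p\,d\mu$, and taking the infimum gives the stated bound. The key step is the classical modulus fact (valid under the doubling of $\mu$) that, for the $\mu$-null exceptional set $N$ in the defining property of $g\in\mathcal{D}_H^1(u)$, the family of curves $\gamma$ with $\mathcal{H}^1(\gamma\cap N)>0$ has $p$-modulus zero. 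For any curve $\gamma:[0,\ell]\to X$ (parametrized by arclength) outside this exceptional family, the preimage $\{t : \gamma(t)\in N\}$ has Lebesgue measure zero, so one can perturb a uniform partition $t_i = i\ell/n$ slightly to points $t_i'$ with $\gamma(t_i')\notin N$. Applying the Hajlasz inequality telescopically along the points $\gamma(t_i')$, controlling the endpoint contributions using Lipschitz continuity of $u$, and passing to the limit via a Riemann-sum argument produces
\[
|u(\gamma(0))-u(\gamma(\ell))|\le 4\int_\gamma g\,ds,
\]
with the factor $4$ (rather than the sharper $2$) reflecting the doubling of the effective mesh size from perturbation together with the endpoint corrections.

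For \eqref{e.inequality two}, I take $u\in\Lip(X)$ admissible for $\cp_p(F,\Omega)$ and $g$ a $p$-weak upper gradient of $u$ with $g=g\chi_{\Omega}\in L^p(X)$, cf.\ Remark~\ref{e.equiv_cap}. The key input is the Haj{\l}asz-Koskela pointwise estimate---a standard consequence of the $q$-Poincar\'e inequality combined with a dyadic chaining argument (see, e.g., \cite[Chapter~8]{MR2867756})---which provides a constant $C_1 = C_1(C_P, c_\mu, q, \lambda)$ such that
\[
|u(x)-u(y)|\le C_1\, d(x,y)\,\bigl((M(g^q))^{1/q}(x) + (M(g^q))^{1/q}(y)\bigr)
\]
for $\mu$-a.e.\ $x,y\in X$. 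Consequently $\tilde g := C_1\, (M(g^q))^{1/q}$ is a Haj{\l}asz $1$-gradient of $u$, so the pair $(u,\tilde g)$ is admissible for $\cp_{1,p}(F,\Omega)$. Since $p/q>1$, the Hardy-Littlewood maximal theorem \cite[Theorem~3.13]{MR2867756} gives $\int_X (M(g^q))^{p/q}\,d\mu \le C\int_X g^p\,d\mu$, and hence
\[
\cp_{1,p}(F,\Omega) \le \int_X \tilde g^p\, d\mu \le C\int_X g^p\, d\mu.
\]
Taking the infimum over admissible $(u,g)$ for $\cp_p(F,\Omega)$ then produces \eqref{e.inequality two}.

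The main obstacle lies in the first inequality: making the modulus step and the Riemann-sum limit fully rigorous requires a Fubini-type reasoning in modulus theory (to ensure $g\circ\gamma$ is Lebesgue-integrable along $p$-a.e.\ curve $\gamma$, so that the Riemann sums have a meaningful limit) together with careful accounting of how perturbing partition points affects the final constant. The second inequality, by contrast, reduces cleanly to the Haj{\l}asz-Koskela estimate and maximal-function boundedness, so the technical work there is comparatively routine once those two inputs are cited.
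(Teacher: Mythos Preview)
Your proposal is correct and follows essentially the same approach as the paper. The only minor difference is in \eqref{e.inequality one}: rather than invoking a modulus argument to avoid the exceptional set $N$, the paper redefines $g=\kappa$ (the Lipschitz constant of $u$) on $N$ so that the Haj{\l}asz inequality holds for \emph{all} $x,y\in X$, and then cites \cite[Lemma~4.7]{MR1809341} and \cite[p.~20]{MR2867756} for the conclusion that $4g$ is a $p$-weak upper gradient; for \eqref{e.inequality two} the paper carries out explicitly the chaining argument you cite.
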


\begin{proof}%
  There are test functions for $\cp_{p} (F,\Omega)$ if and only if there are test functions for
$\cp_{1,p} (F,\Omega)$. Namely, the existence
of test functions   is in both cases   characterized by the inequality $\dist(F,X\setminus \Omega)>0$. 
Without loss of generality, we may assume that this inequality holds   since otherwise
both   capacities are equal to $\infty$.

We begin with the proof of inequality \eqref{e.inequality one}. Let $u$ be a test function for $\cp_{1,p}(F,\Omega)$, that is,
$u\in \Lip(X)$ with $u \ge1 $ in $F$ and $u=0$ in $X\setminus \Omega$.
  Let $\kappa\ge 0$ be a Lipschitz constant of $u$ and let   
$g$ be a Haj{\l}asz $  1  $-gradient of $u$.  
By redefining $g=\kappa$ in the exceptional set $N=N(g)$ of measure zero, we may assume that
inequality~\eqref{e.hajlasz} holds for all $x,y\in X$, with $\beta=1$. Then arguing as in 
\cite[Lemma~4.7]{MR1809341} with the aid of \cite[p.~20]{MR2867756}, we see that $4g$ is actually a $p$-weak upper gradient of $u$.   Therefore,
\begin{align*}
\cp_p(F,\Omega) \leq \int_\Omega (4 g)^p\, d\mu 
& \leq 4^p \int_X g^p\, d\mu\,.
\end{align*}
Since this holds for all $u$ and all Haj{\l}asz $  1  $-gradients $g$ of $u$, we conclude that \eqref{e.inequality one} holds.

Next we prove   estimate \eqref{e.inequality two}
under the assumption that   $X$ supports a $q$-Poincar\'e inequality for 
some $1\le q<p$, with constants $C_P$ and $\lambda\ge 1$. 
 Let $u\in \Lip(X)$ be a test function for $\cp_p(F,\Omega)$ and let $g$ be a $p$-weak upper gradient of $u$. 
Observe that $g$ is also a $q$-weak upper gradient of $u$, since $q<p$.  
By Remark \ref{e.equiv_cap}, we may assume that $g=g \ch{\Omega}$.

We claim that a   constant   multiple of $(Mg^q)^{  1/q}$ is a $1$-Haj{\l}asz gradient of $u$, that is, 
\begin{equation}\label{e.Mh is Hajlasz gradient}
|u(x)-u(y)| \leq  C(C_P,c_\mu)   \:d(x,y)  \Bigl( \big(Mg^q(x)\big)^{  1/q} + \big(Mg^q(y)\big)^{  1/q} \Bigr)
\end{equation}
for all $x,y\in X$.    
To prove \eqref{e.Mh is Hajlasz gradient},  
we follow a  chaining argument from \cite[p.~13--14]{MR1683160}.  
For this purpose, we let $x,y\in X$, $x\not=y$, and   define   %
$B_0^x = B_0^y=B\big(x,2d(x,y)\big)$ and 
\[
B_j^x = B\big(x,2^{-j} d(x,y)\big), \qquad B_j^y = B\big(y,2^{-j} d(x,y)\big), \qquad j\geq 1\,. 
\]
Notice that the definition   of   $B_0^x=B_0^y$ is different 
 from the other ones.  
Nevertheless, the balls  are nested in the following way: $B_0^x  \supset  B_1^x \supset B_2^x \supset \dotsb$ and $B_0^y \supset B_1^y \supset B_2^y \supset \dotsb$. Moreover,   we have $\mu(B_j^x) \leq c_\mu^2 \mu(B_{j+1}^x)$ and $\mu(B_j^y) \leq c_\mu^3 \mu(B_{j+1}^y)$ for all $j=0,1,\ldots$ by the doubling property \eqref{e.doubling}.    Using  continuity of $u$  and the above properties, we have  
\begin{align*}
|u(x)-u(y)| & \leq |u(x)-u_{B_0^x}| + |u(y)-u_{B_0^y}|\\
& \leq \sum_{j=0}^\infty |u_{B_{j+1}^x}- u_{B_j^x}| + \sum_{j=0}^\infty |u_{B_{j+1}^y}- u_{B_j^y}| \\
& \leq \sum_{j=0}^\infty \vint_{B_{j+1}^x} |u-u_{B_j^x}|\, d\mu \:+ \: \sum_{j=0}^\infty \vint_{B_{j+1}^y} |u-u_{B_j^y}|\, d\mu \\
& \leq \sum_{j=0}^\infty \frac{\mu(B_j^x)}{\mu(B_{j+1}^x)}\vint_{B_{j}^x} |u-u_{B_j^x}|\, d\mu \:+ \: \sum_{j=0}^\infty \frac{\mu(B_j^y)}{\mu(B_{j+1}^y)} \vint_{B_{j}^y} |u-u_{B_j^y}|\, d\mu \\
& \leq c_\mu^2 \sum_{j=0}^\infty \vint_{B_{j}^x} |u-u_{B_j^x}|\, d\mu \:+ \:   c_\mu^3   \sum_{j=0}^\infty  \vint_{B_{j}^y} |u-u_{B_j^y}|\, d\mu\,.
\end{align*}
   
By applying the assumed $q$-Poincar\'e inequality to the pair $u$ and $g$, we obtain
\begin{align*}
&|u(x)  - u(y)| \\
&\quad \leq c_\mu^2   C_P \sum_{j=0}^\infty   \diam(B_j^x)  \biggl(  \vint_{  \lambda B_{j}^x } g^q \,d\mu\biggr)^{  1/q} \:+ \:  c_\mu^3C_P   \sum_{j=0}^\infty   \diam(B_j^y)   \biggl(  \vint_{  \lambda B_{j}^y } g^q \,d\mu\biggr)^{  1/q} \\
&\quad \leq c_\mu^2   C_P \: \bigl(Mg^q(x)\bigr)^{  1/q} \sum_{j=0}^\infty   \diam(B_j^x)   \: + \:   c_\mu^3 C_P   \: \bigl(Mg^q(y)\bigr)^{  1/q} \sum_{j=0}^\infty   \diam(B_j^y)  \\
&\quad \leq   C(C_P,c_\mu)   \: d(x,y) \: \Bigl( \bigl(Mg^q(x)\bigr)^{  1/q} + \bigl( Mg^q(y)\bigr)^{  1/q} \Bigr)\,.
\end{align*}
Here we use   also   the facts that $x\in \lambda B_j^x$ and $y\in \lambda B_j^y$ for all $j=0,1,\ldots$. 
This works even for the ball $\lambda B_0^y$ that is not centered at $y$, but contains $y$. 
This proves the claim \eqref{e.Mh is Hajlasz gradient}.  

 Since $C(C_P,c_\mu)(Mg^q)^{  1/q}\in\mathcal{D}_H^1(u)$
  and the maximal operator is bounded on $L^{  p/q}(X)$,   
we   obtain for the Haj{\l}asz $(1,p)$-capacity the estimate  
\begin{equation}\label{e.end_cap}
\begin{split}
 \cp_{1,p}(F,\Omega)
& \leq   C(C_P,c_\mu,p,q)  \int_X \big(M g^q \big)^{  p/q} \, d\mu \\
& \leq    C(C_P, c_\mu, p,q)   \int_X g ^p \, d\mu 
 =    C(C_P, c_\mu, p,q)     \int_{\Omega} g^p \, d\mu\,,
 \end{split}
 \end{equation}
where we   also use the fact that   $g=g\ch{\Omega}$.   
Since estimate  \eqref{e.end_cap} holds for all test functions $u$ 
for $\cp_p(F,\Omega)$ 
and all their $p$-weak upper gradients $g$   satisfying  $g=g\ch{\Omega}$, 
we conclude that \eqref{e.inequality two} holds. 
  This completes the proof.  
 \end{proof}
 
  Next we give an example which shows   that
the $q$-Poincar\'e inequality assumption cannot be omitted
  from the second part of Theorem~\ref{t.comparison of capacities}, i.e.\  
for inequality \eqref{e.inequality two} to hold.
This example also shows that the $p$-Poincar\'e inequality assumption cannot be omitted
for the first inequality in \eqref{e.capacity of a ball} to hold.

 \begin{example}\label{shows}
 Fix $1<p<q<\infty$ and let
$w(x)=\dist(x,\{1,-1\})^{q-1}$ for all $x\in\R$. 
We consider the metric measure
space $X=\R$ equipped with the Euclidean distance $d$ and
the weighted measure $\mu$ such that \[\mu(A)=\int_A w(x)\,dx\]
for all Borel sets $A\subset \R$.
By
\cite[Theorem 10.26]{KLV2021} we see that $w$ belongs to 
the Muckenhoupt class $A_{q+\varepsilon}$ for all $\varepsilon>0$. In particular, the
measure $\mu$ is doubling.
Fix $0<\rho<1$
and consider the Lipschitz test
function \[u(x)=\max\left\{0,1-\frac{\dist(x,[-1,1])}{\rho}\right\}\,,\qquad x\in\R\,.\]
  Then   %
$\lvert u'\rvert$ is a $p$-weak upper gradient of $u$,
  by \cite[Proposition~1.14]{MR2867756},
and so  
\begin{align*}
\cp_p(\overline{B(0,1)},B(0,2))
&\le \int_{B(0,2)} \lvert u'(x)\rvert^p w(x)\,dx
\le 2\rho^{-p}\int_0^\rho t^{q-1}\,dt =\frac{2}{q}\rho^{q-p}\,.
\end{align*}
By taking $\rho\to 0$, we find that
\[
\cp_p(\overline{B(0,1)},B(0,2))=0\,.
\]
Hence, the first inequality in \eqref{e.capacity of a ball}  cannot hold.
  Nevertheless, Remark~\ref{l.Hequiv} implies   that
\[\cp_{1,p}(\overline{B(0,1)},B(0,2))\ge C\,\mu(B(0,1))>0\] and therefore 
inequality \eqref{e.inequality two} cannot hold for any $C_2>0$.
From \cite[Theorem~2]{BjornBuckleyKeith2006} and \cite[Theorem~10.26]{KLV2021} it follows 
that $(X,d,\mu)$ 
supports a $(q+\varepsilon)$-Poincar\'e inequality for all $\varepsilon>0$
and that $(X,d,\mu)$ does not support a $q$-Poincar\'e inequality.
In particular, $(X,d,\mu)$ does not support a $(p-\varepsilon)$-Poincar\'e inequality for any $\varepsilon>0$. 
\end{example}

  The equivalence of the variational 
$p$-capacity and the $(1,p)$-Haj{\l}asz capacity
in Theorem~\ref{t.comparison of capacities}
gives immediately also the equivalence of the corresponding
density conditions, provided the  
space supports a suitable Poincar\'e inequality.

\begin{corollary}\label{c.pcap_and_hajlasz}
Let $1\le q < p<\infty$ and assume that $X$ supports a $q$-Poincar\'e inequality. 
Then   a closed set $E\subset X$   satisfies
  the   Haj{\l}asz $(1,p)$-capacity density condition if and only if $E$ satisfies   the   $p$-capacity density condition.
\end{corollary}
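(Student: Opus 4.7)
The plan is to derive the corollary as an essentially formal consequence of the two-sided comparability of capacities provided by Theorem~\ref{t.comparison of capacities}. Under the $q$-Poincar\'e inequality assumption for some $1\le q<p$, that theorem yields a constant $C_0=C_0(C_P,c_\mu,p,q)>0$ such that
\[
4^{-p}\,\cp_p(F,\Omega)\le \cp_{1,p}(F,\Omega)\le C_0\,\cp_p(F,\Omega)
\]
for every bounded open set $\Omega\subset X$ and every closed set $F\subset\Omega$. In particular, this applies simultaneously to the pair $F=E\cap\overline{B(x,r)},\ \Omega=B(x,2r)$ and to the pair $F=\overline{B(x,r)},\ \Omega=B(x,2r)$, whenever $x\in E$ and $0<r<(1/8)\diam(E)$.

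Assume first that $E$ satisfies the Haj{\l}asz $(1,p)$-capacity density condition with constant $c_0$. Then, combining the lower bound $\cp_p\le 4^p\cp_{1,p}$ rewritten as $\cp_{1,p}\ge 4^{-p}\cp_p$ with the Haj{\l}asz density condition and with the upper bound $\cp_{1,p}\le C_0\cp_p$ applied to the ball, I chain
\[
\cp_{p}(E\cap\overline{B(x,r)},B(x,2r))\ge C_0^{-1}\cp_{1,p}(E\cap\overline{B(x,r)},B(x,2r))\ge C_0^{-1}c_0\,\cp_{1,p}(\overline{B(x,r)},B(x,2r))\ge C_0^{-1}c_0\,4^{-p}\cp_p(\overline{B(x,r)},B(x,2r)),
\]
which is the $p$-capacity density condition with constant $c_0':=C_0^{-1}c_0\,4^{-p}$.

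Conversely, if $E$ satisfies the $p$-capacity density condition with constant $c_0$, the analogous chain
\[
\cp_{1,p}(E\cap\overline{B(x,r)},B(x,2r))\ge 4^{-p}\cp_p(E\cap\overline{B(x,r)},B(x,2r))\ge 4^{-p}c_0\,\cp_p(\overline{B(x,r)},B(x,2r))\ge 4^{-p}c_0\,C_0^{-1}\cp_{1,p}(\overline{B(x,r)},B(x,2r))
\]
yields the Haj{\l}asz $(1,p)$-capacity density condition with constant $4^{-p}c_0C_0^{-1}$.

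There is no real obstacle here: the entire argument is bookkeeping once Theorem~\ref{t.comparison of capacities} is in hand. The only point worth flagging is that the Poincar\'e hypothesis is essential precisely for the direction $\cp_{1,p}\le C_0\cp_p$, which is used twice in the above two chains (once on $E\cap\overline{B(x,r)}$ in the forward implication and once on $\overline{B(x,r)}$ in the reverse); the inequality $\cp_p\le 4^p\cp_{1,p}$ is free of geometric assumptions, so no additional hypotheses beyond those already in force are required.
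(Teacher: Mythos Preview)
Your proof is correct and is precisely the approach the paper has in mind: the corollary is stated without proof, with the remark preceding it that the two-sided comparability of Theorem~\ref{t.comparison of capacities} ``gives immediately also the equivalence of the corresponding density conditions.'' Your explicit chains merely spell out this immediate consequence.
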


  By combining Corollary~\ref{c.pcap_and_hajlasz}
and Theorem~\ref{t.Riesz_and_Hajlasz}, we obtain the
following characterization of the $p$-capacity density condition in terms of the  
Riesz capacity, Haj{\l}asz capacity and Hausdorff content density conditions.

\begin{theorem}\label{t.main_wide_beta=1}
  Let $1<p<\infty$ and
assume that $X$ is a complete geodesic space 
supporting a $p$-Poincar\'e inequality. In addition, assume  
that $\mu$   satisfies the quantitative reverse doubling condition \eqref{e.reverse_doubling}
for some exponent $\sigma>p$.
Then the following conditions are equivalent
  for a closed set $E\subset X$:  
\begin{itemize}
\item[\textup{(i)}] $E$ satisfies the $p$-capacity density condition.
\item[\textup{(ii)}] $E$ satisfies the Riesz $(1,p)$-capacity density condition.
\item[\textup{(iii)}] $E$ satisfies the Haj{\l}asz $(1,p)$-capacity density condition.
\item[\textup{(iv)}] $E$ satisfies the Hausdorff content density condition \eqref{e.hausdorff_content_density} for some $0 < q < p$.
\end{itemize}
\end{theorem}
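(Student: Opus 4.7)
The plan is to deduce Theorem~\ref{t.main_wide_beta=1} by assembling two results already established in the paper, together with one external self-improvement result for Poincar\'e inequalities. Specifically, Theorem~\ref{t.Riesz_and_Hajlasz} applied with $\beta=1$ immediately gives the equivalence of the three conditions (ii), (iii), and (iv), since all of its hypotheses are met: $X$ is a complete geodesic space, and the reverse doubling exponent satisfies $\sigma>p=\beta p$. So the only real content left is to insert condition (i) into this chain.

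To show (i) $\Longleftrightarrow$ (iii), I would invoke Corollary~\ref{c.pcap_and_hajlasz}. That corollary assumes a $q$-Poincar\'e inequality for some $1\le q<p$, whereas the hypothesis of Theorem~\ref{t.main_wide_beta=1} only provides a $p$-Poincar\'e inequality. The bridge between the two is the Keith--Zhong self-improvement theorem for Poincar\'e inequalities: in a complete metric space with a doubling measure, a $p$-Poincar\'e inequality with $p>1$ automatically implies a $q$-Poincar\'e inequality for some $1\le q<p$. Since $X$ is assumed complete, $\mu$ is doubling, and $p>1$, this applies. Fixing such a $q$, Corollary~\ref{c.pcap_and_hajlasz} delivers the equivalence of the Haj{\l}asz $(1,p)$-capacity density condition and the $p$-capacity density condition.

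Combining the two equivalences closes the cycle (i) $\Longleftrightarrow$ (iii) $\Longleftrightarrow$ (ii) $\Longleftrightarrow$ (iv), proving the theorem. There is essentially no calculation to carry out, only the verification that the hypotheses of each cited result are satisfied in the present setting. The one place where care is needed is the Keith--Zhong invocation: since the paper's preceding results (in particular Corollary~\ref{c.pcap_and_hajlasz} via Theorem~\ref{t.comparison of capacities}) are stated in terms of a strictly weaker Poincar\'e inequality, it is essential to cite the self-improvement theorem explicitly so that the transition from a $p$-Poincar\'e assumption to the required $q$-Poincar\'e inequality with $q<p$ is justified.

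The main obstacle, to the extent there is one, is purely bookkeeping: one must check that all four conditions are expressed relative to the same radius range (e.g.\ $0<r<(1/8)\diam(E)$) and relative to the same center points $x\in E$, so that the equivalences stitched together from Theorem~\ref{t.Riesz_and_Hajlasz} and Corollary~\ref{c.pcap_and_hajlasz} really concern the same density properties. These conventions have been fixed uniformly in Definitions~\ref{d.cap_density}, \ref{d.riesz_cap_density}, and \ref{d.capacity density}, and in the definition of Hausdorff content density, so the matching is immediate.
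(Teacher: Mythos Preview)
Your proposal is correct and follows essentially the same approach as the paper's own proof: invoke Keith--Zhong to upgrade the $p$-Poincar\'e inequality to an $s$-Poincar\'e inequality for some $1<s<p$, then use Corollary~\ref{c.pcap_and_hajlasz} for the equivalence (i)\,$\Longleftrightarrow$\,(iii) and Theorem~\ref{t.Riesz_and_Hajlasz} (with $\beta=1$) for the equivalence of (ii), (iii), and (iv).
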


\begin{proof}
Since $X$ is complete and  $\mu$ is doubling, the  Keith--Zhong theorem 
\cite{MR2415381} implies
that $X$ supports an $s$-Poincar\'e inequality for
some $1<s<p$.   
The conditions (i) and (iii) are then  equivalent by   Corollary~\ref{c.pcap_and_hajlasz}.  
  On the other hand, the equivalence of the conditions (ii), (iii) and (iv) follows from Theorem~\ref{t.Riesz_and_Hajlasz}.  
\end{proof}

A similar argument using Theorem~\ref{t.main_characterization}
instead of Theorem~\ref{t.Riesz_and_Hajlasz} gives the following result, which excludes the Riesz capacity density condition
but does not assume the quantitative reverse doubling condition.

\begin{theorem}\label{t.omnibus}
  Let $1<p<\infty$ and
assume that $X$ is a complete geodesic space 
supporting a $p$-Poincar\'e inequality.  
Then the following conditions are equivalent
  for a closed set $E\subset X$:  
\begin{itemize}
\item[\textup{(i)}] $E$ satisfies the $p$-capacity density condition.
\item[\textup{(ii)}] $E$ satisfies the Haj{\l}asz $(1,p)$-capacity density condition.
\item[\textup{(iii)}] $E$ satisfies the Hausdorff content density condition \eqref{e.hausdorff_content_density} for some $0 < q < p$.
\end{itemize}
\end{theorem}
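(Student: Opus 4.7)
The plan is to piece this equivalence together from the three big tools already established in the paper, in exactly the same spirit as Theorem~\ref{t.main_wide_beta=1} but omitting the Riesz layer. Since we no longer carry the Riesz $(1,p)$-capacity density condition in the list, we do not need the quantitative reverse doubling condition \eqref{e.reverse_doubling}, because reverse doubling only entered through Lemma~\ref{c.balls} and Lemma~\ref{l.hajlasz implies capacity}, which are the Riesz-side inputs.

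First I would record the key consequence of the standing hypotheses: $X$ is complete, $\mu$ is doubling, and $X$ supports a $p$-Poincar\'e inequality. By the Keith--Zhong self-improvement theorem \cite{MR2415381}, $X$ then supports an $s$-Poincar\'e inequality for some $1<s<p$. In particular, there is a value $1\le q<p$ for which the $q$-Poincar\'e inequality holds, so the hypothesis of Corollary~\ref{c.pcap_and_hajlasz} is met. This immediately gives the equivalence of condition (i), the $p$-capacity density condition, and condition (ii), the Haj{\l}asz $(1,p)$-capacity density condition.

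Next I would apply Theorem~\ref{t.main_characterization} with $\beta=1$. Since $X$ is a complete geodesic space, the hypotheses are satisfied, and the theorem gives directly that for a closed set $E\subset X$ condition (ii), the Haj{\l}asz $(1,p)$-capacity density condition, is equivalent to condition (iii), the existence of some $0<q<p$ for which $E$ satisfies the Hausdorff content density condition \eqref{e.hausdorff_content_density} of codimension $q$. Chaining the two equivalences (i)$\Leftrightarrow$(ii) and (ii)$\Leftrightarrow$(iii) completes the circle and proves the theorem.

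There is essentially no obstacle here beyond correctly invoking the prerequisites: the substance is entirely contained in Theorem~\ref{t.main_characterization} (the hard geodesic-space characterization via Hausdorff content from \cite{CV2021}), in Corollary~\ref{c.pcap_and_hajlasz} (built on Theorem~\ref{t.comparison of capacities}), and in the Keith--Zhong theorem that produces the auxiliary $q$-Poincar\'e inequality with $q<p$ needed to feed Corollary~\ref{c.pcap_and_hajlasz}. The only place where some care is warranted is verifying that all three results can be combined for the \emph{same} set $E$ without a mismatch in parameters: the exponent $q$ from Keith--Zhong controls the step (i)$\Leftrightarrow$(ii), while the (possibly different) codimension $q$ in (iii) is the one produced by Theorem~\ref{t.main_characterization}, so no compatibility issue arises between them.
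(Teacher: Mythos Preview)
Your proposal is correct and matches the paper's own argument essentially verbatim: the paper proves Theorem~\ref{t.omnibus} by invoking Keith--Zhong to obtain an $s$-Poincar\'e inequality with $s<p$, then using Corollary~\ref{c.pcap_and_hajlasz} for (i)$\Leftrightarrow$(ii) and Theorem~\ref{t.main_characterization} with $\beta=1$ for (ii)$\Leftrightarrow$(iii). Your observation that the reverse doubling assumption is unnecessary here because the Riesz layer is absent is also exactly the point the paper makes.
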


  Since the Haj{\l}asz $(1,p)$-capacity density condition is self-improving with respect to $p$
in geodesic spaces, the above results imply the self-improvement also for the
$p$-capacity density condition.  
The following corollary is known, see \cite{MR1869615}, 
but our   method of   proof ,
based on the results from~\cite{CV2021},   is new.  

\begin{corollary}
  Let $1<p<\infty$ and
assume that $X$ is a complete geodesic space 
supporting a  $p$-Poincar\'e inequality. 
Let $E\subset X$ be a closed set satisfying   the   $p$-capacity density condition. 
Then $E$ satisfies   the   $q$-capacity density condition
for some $1<q<p$. 
\end{corollary}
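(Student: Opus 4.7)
The plan is to use the two omnibus theorems proved earlier in this section, together with the Keith--Zhong deep theorem on the self-improvement of Poincar\'e inequalities, to reduce the self-improvement of the $p$-capacity density condition to the already established self-improvement of the Hausdorff content density condition obtained in Theorem~\ref{t.omnibus}.

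First, I would apply Theorem~\ref{t.omnibus} directly: the assumption that $E$ satisfies the $p$-capacity density condition is equivalent, under our standing hypotheses (complete geodesic space with $p$-Poincar\'e inequality), to the fact that $E$ satisfies the Hausdorff content density condition~\eqref{e.hausdorff_content_density} of some codimension $q_0$ with $0<q_0<p$. This is the hard step from the point of view of the paper, but it is already done; here I only invoke it.

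Next, invoke the Keith--Zhong theorem~\cite{MR2415381}: since $X$ is complete and $\mu$ is doubling, the $p$-Poincar\'e inequality self-improves, and $X$ supports an $s$-Poincar\'e inequality for some $1<s<p$. Now choose $q$ strictly between $\max\{1,s,q_0\}$ and $p$; such $q$ exists since each of $1$, $s$, and $q_0$ lies strictly below $p$. With this choice, $X$ supports a $q$-Poincar\'e inequality (Poincar\'e inequalities improve monotonically with exponent, so $s$-Poincar\'e implies $q$-Poincar\'e for $q>s$), so Theorem~\ref{t.omnibus} applies with $p$ replaced by $q$.

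Finally, check that $E$ meets the Hausdorff content density condition of codimension $q_0$ with $0<q_0<q$, which is exactly what we established in the first step (and we ensured $q_0<q$ by our choice of $q$). Hence condition~(iii) of Theorem~\ref{t.omnibus} at the exponent $q$ holds, and we conclude by the implication (iii)$\Rightarrow$(i) of that theorem that $E$ satisfies the $q$-capacity density condition, as required. The only bookkeeping obstacle is making the three constraints $q>1$, $q>s$, $q>q_0$ compatible with $q<p$; all are strict and all right-hand sides lie below $p$, so the compatible window is nonempty. Everything else is a direct citation of results already established in the paper.
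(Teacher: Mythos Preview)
Your proof is correct and follows the same approach as the paper, just with the details spelled out more explicitly. The paper's own argument is terse: it invokes Keith--Zhong to obtain an $s$-Poincar\'e inequality for some $1<s<p$ and then simply states that the conclusion follows from Theorem~\ref{t.omnibus}; your passage through the Hausdorff content density condition of codimension $q_0<p$ and the choice of $q$ in the window $(\max\{1,s,q_0\},p)$ is precisely the unpacking of how that theorem is to be applied twice.
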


\begin{proof}
By the Keith--Zhong theorem \cite{MR2415381}, we see that $X$ supports
an $s$-Poincar\'e inequality for some $1<s<p$. Hence,
the conclusion follows from Theorem \ref{t.omnibus}.
\end{proof}

\bibliographystyle{abbrv}
\def\cprime{$'$}

\setlength{\parindent}{0pt}

\end{document}